\newif\ifpreprint
\title{Learning to Warm-Start Fixed-Point Optimization Algorithms}
\author{Rajiv Sambharya$^1$, Georgina Hall$^2$, Brandon Amos$^3$, \\and Bartolomeo Stellato$^1$}
\date{%
    $^1$Princeton University\\%
    $^2$INSEAD\\
    $^3$Meta AI\\[2ex]%
    \today
}
\renewcommand\arraystretch{1.2}
\newtheorem{theorem}{Theorem}
\newtheorem{lemma}[theorem]{Lemma}
\newtheorem{corollary}[theorem]{Corollary}
\theoremstyle{definition}
\newtheorem{definition}{Definition}[section]
\renewcommand\arraystretch{1.2}
\title{Learning to Warm-Start Fixed-Point Optimization Algorithms}
\author{%
 \name Rajiv Sambharya \email rajivs@princeton.edu\\
 \addr Operations Research and Financial Engineering, Princeton University, Princeton, NJ, USA
%  \AND
%  \name Vinit Ranjan \email vranjan@princeton.edu\\
%  \addr Operations Research and Financial Engineering, Princeton University, Princeton, NJ, USA
 \AND
 \name Georgina Hall \email georgina.hall@insead.edu\\
 \addr Decision Sciences, INSEAD, Fontainebleau, France
 \AND
 \name Brandon Amos \email bda@meta.com\\
 \addr Meta AI, New York City, NY, USA
 \AND
 \name Bartolomeo Stellato \email bstellato@princeton.edu\\
 \addr Operations Research and Financial Engineering, Princeton University, Princeton, NJ, USA
}
\definecolor{bdacolor}{RGB}{168, 141, 201}
\definecolor{ghcolor}{RGB}{48,213,200}
\newcommand{\Tk}{T^k_{\theta}}
\newcommand{\op}{T_{\theta}}
\newcommand{\Lo}{\ell_{\theta}}
\newcommand{\hw}[1][w]{h_{#1}}
\newcommand{\hwp}[1][u]{h_{w + #1}}
\DeclareMathOperator*{\argmin}{argmin}
\newcommand{\Lok}[1][z]{\ell_{\theta}^{\rm{fp}}(T_{\theta}^k(#1))}
\newcommand{\Lot}[1][z]{\ell_{\theta}^{\rm{fp}}(T_{\theta}^t(#1))}
\newcommand{\Loit}[1][z]{\ell_{\theta_i}^{\rm{fp}}(T_{\theta_i}^t(#1))}
\newcommand{\rok}[1][z]{r_{\theta}(T_{\theta}^t(#1))}
\newcommand{\rokp}[1][z]{r_{\theta}(T_{\theta}^{t+1}(#1))}
\newcommand{\gok}[1][\gamma]{g_{#1,\theta}^t}
\newcommand{\goik}[1][\gamma]{g_{#1,\theta_i}^t}
\newcommand{\Ns}{\mathcal{N}(0,\sigma^2)}
\newcommand{\tw}{\tilde{w}}
\newcommand{\bh}{\bar{h}}
\newcommand{\prior}{\pi}
\newcommand{\paramdist}{Q}
\newcommand{\fth}{f_{\theta}}
\newcommand{\gth}{g_{\theta}}
\DeclareDocumentCommand{\T}{ O{z} O{} }{T\IfValueT{#2}{(#1,\theta_{#2})}\IfNoValueT{#2}{(#1,\theta)}}
\DeclareDocumentCommand{\Tj}{ O{k} O{z} O{} }{T^{#1}\IfValueT{#3}_{\theta_{#3}}{(#2)}}
\DeclareDocumentCommand{\CB}{ O{} }{C_{B_{\IfValueTF{#1}{\theta_{#1}}{\theta}}}}
\DeclareDocumentCommand{\RB}{ O{} }{R_{B_{\IfValueTF{#1}{\theta_{#1}}{\theta}}}}
\newcommand{\eg}{{\it e.g.}}
\newcommand{\ie}{{\it i.e.}}
\newcommand{\reals}{{\mbox{\bf R}}}
\newcommand{\complex}{{\mbox{\bf C}}}
\newcommand{\symm}{{\mbox{\bf S}}}  % symmetric matrices
\newcommand{\prob}{{\mathbf P}}
\newcommand{\Rank}{\mathop{\bf rank}}
\newcommand{\Tr}{\mathop{\bf tr}}
\newcommand{\diag}{\mathop{\bf diag}}
\newcommand{\dist}{{\bf dist{}}}
\newcommand{\fix}{\mathop{\bf fix{}}}
\newcommand{\vect}{\mathop{\bf vec{}}}
\newcommand{\dom}{\mathop{\bf dom}} % domain
\newcommand{\prox}{\textbf{prox}}
\newcommand{\card}{\textbf{card}}
\newcommand{\Sec}{Section}
\newcommand{\Thm}{Theorem}
\newcommand{\Eqn}{Equation}
\newcommand{\Lem}{Lemma}
\newcommand{\Def}{Definition}
\newcommand{\cblock}[3]{
  \hspace{-1.5mm}
  \begin{tikzpicture}
    [
    node/.style={square, minimum size=10mm, thick, line width=0pt},
    ]
    \node[fill={rgb,255:red,#1;green,#2;blue,#3}] () [] {};
  \end{tikzpicture}%
}
\newcommand{\ccircle}[3]{%
  % \hspace{-1.5mm}%
  % \vspace{2mm}
  \raisebox{0.55\height}{
  \begin{tikzpicture}[baseline=(node.base)]%
    \node[circle, fill={rgb,255:red,#1;green,#2;blue,#3}, yshift=14] (node) at (0,1) {};%
  \end{tikzpicture}%
  }
}
\newcommand{\linediamond}[3]{%
  % \raisebox{0.5\height}{
  \begin{tikzpicture}[baseline={(0,-.1)}]
    \coordinate (A) at (0.15,0);
    \coordinate (B) at (.25,-.1);
    \coordinate (C) at (.35,0);
    \coordinate (D) at (.25,.1);
    \draw[draw=none, fill={rgb,255:red,#1;green,#2;blue,#3}] (A) -- (B) -- (C) -- (D) -- cycle;
    \draw[line width=0.8pt, color={rgb,255:red,#1;green,#2;blue,#3}] (0,0) -- (0.5,0);
  \end{tikzpicture}%
  }
\newcommand{\linecircle}[3]{%
  % \raisebox{0.5\height}{
  \begin{tikzpicture}[baseline={(0,-.1)}]
    % \coordinate (A) at (0.15,0);
    % \coordinate (B) at (.25,-.1);
    % \coordinate (C) at (.35,0);
    % \coordinate (D) at (.25,.1);
    % \draw[draw=none, fill={rgb,255:red,#1;green,#2;blue,#3}] (A) -- (B) -- (C) -- (D) -- cycle;
    \draw[draw=none, fill={rgb,255:red,#1;green,#2;blue,#3}](0.25, 0) circle (.08);
    \draw[line width=0.8pt, color={rgb,255:red,#1;green,#2;blue,#3}] (0,0) -- (0.5,0);
  \end{tikzpicture}%
  }
\newcommand{\linesquare}[3]{%
  % \raisebox{0.5\height}{
  \begin{tikzpicture}[baseline={(0,-.1)}]
    % \coordinate (A) at (0.15,0);
    % \coordinate (B) at (.25,-.1);
    % \coordinate (C) at (.35,0);
    % \coordinate (D) at (.25,.1);
    % \draw[draw=none, fill={rgb,255:red,#1;green,#2;blue,#3}] (A) -- (B) -- (C) -- (D) -- cycle;
    \draw[draw=none, fill={rgb,255:red,#1;green,#2;blue,#3}](0.179, -.071) rectangle (.321,.071);
    \draw[line width=0.8pt, color={rgb,255:red,#1;green,#2;blue,#3}] (0,0) -- (0.5,0);
  \end{tikzpicture}%
  }
\newcommand{\linelefttri}[3]{%
  % \raisebox{0.5\height}{
  \begin{tikzpicture}[baseline={(0,-.1)}]
    \coordinate (A) at (0.17,0);
    \coordinate (B) at (.3,-.1);
    \coordinate (C) at (.3,.1);
    \draw[draw=none, fill={rgb,255:red,#1;green,#2;blue,#3}] (A) -- (B) -- (C) -- cycle;
    % \draw[draw=none, fill={rgb,255:red,#1;green,#2;blue,#3}](0.179, -.071) rectangle (.321,.071);
    \draw[line width=0.8pt, color={rgb,255:red,#1;green,#2;blue,#3}] (0,0) -- (0.5,0);
  \end{tikzpicture}%
}
\newcommand{\linerighttri}[3]{
  \begin{tikzpicture}[baseline={(0,-.1)}]
    \coordinate (A) at (0.33,0);
    \coordinate (B) at (.2,-.1);
    \coordinate (C) at (.2,.1);
    \draw[draw=none, fill={rgb,255:red,#1;green,#2;blue,#3}] (A) -- (B) -- (C) -- cycle;
    \draw[line width=0.8pt, color={rgb,255:red,#1;green,#2;blue,#3}] (0,0) -- (0.5,0);
  \end{tikzpicture}%
}
\newcommand{\linedowntri}[3]{
  \begin{tikzpicture}[baseline={(0,-.1)}]
    \coordinate (A) at (0.25,-.09);
    \coordinate (B) at (.17,.06);
    \coordinate (C) at (.33,.06);
    \draw[draw=none, fill={rgb,255:red,#1;green,#2;blue,#3}] (A) -- (B) -- (C) -- cycle;
    \draw[line width=0.8pt, color={rgb,255:red,#1;green,#2;blue,#3}] (0,0) -- (0.5,0);
  \end{tikzpicture}%
}
\newcommand{\lineuptri}[3]{
  \begin{tikzpicture}[baseline={(0,-.1)}]
    \coordinate (A) at (0.25,.09);
    \coordinate (B) at (.17,-.06);
    \coordinate (C) at (.33,-.06);
    \draw[draw=none, fill={rgb,255:red,#1;green,#2;blue,#3}] (A) -- (B) -- (C) -- cycle;
    \draw[line width=0.8pt, color={rgb,255:red,#1;green,#2;blue,#3}] (0,0) -- (0.5,0);
  \end{tikzpicture}%
}
\newcommand{\legend}{\vspace{-1mm} \small \linedowntri{0}{0}{0} cold start \hspace{1mm}
  \linelefttri{166}{86}{40} nearest neighbor warm start \hspace{1mm}
  \\learned warm-start $k=$\{\hspace{-1mm}
      \linerighttri{152}{78}{163} $0$
      \linecircle{228}{26}{28} $5$
      \linesquare{55}{126}{184} $15$
      \linediamond{77}{175}{74} $60$
      \}\hspace*{-2mm}}
\newcommand{\legendprevsol}{\vspace{-1mm} \small \linedowntri{0}{0}{0} cold start \hspace{1mm}
 \linelefttri{166}{86}{40} nearest neighbor warm start \hspace{1mm}
  \lineuptri{255}{127}{0} previous solution warm start \hspace{1mm}
  \\learned warm-start $k=$\{\hspace{-1mm}
      \linerighttri{152}{78}{163} $0$
      \linecircle{228}{26}{28} $5$
      \linesquare{55}{126}{184} $15$
      \linediamond{77}{175}{74} $60$
      \}\hspace*{-2mm}}
\newcommand{\iters}{\small{(a) Mean iterations to reach a given fixed point residual (Fp res.)}}
\newcommand{\reduction}{\small{(b) Mean reduction in iterations from a cold start to a given fixed-point residual (Fp res.)}}
\newcommand{\scstiming}{\small{(c) Mean solve times (in milliseconds) in SCS with absolute and relative tolerances set to tol.
  }}
\newcommand{\osqptiming}{\small{(c) Mean solve times (in milliseconds) in OSQP with absolute and relative tolerances set to tol.
  }}
\newcommand{\figsize}{0.79}
\newenvironment{talign}
 {\let\displaystyle\textstyle\align}
 {\endalign}
\newenvironment{talign*}
 {\let\displaystyle\textstyle\csname align*\endcsname}
 {\endalign}
\newcommand{\Cgt}{C_{\gamma}(t)}
\newcommand{\Cgtt}{C_{\gamma / 2}(t)}
\newcommand{\quadcopterwidthjmlr}{0.2}
\newcommand{\quadcopterwidthpreprint}{0.25}
\renewcommand{\eqref}[1]{\textup{\tagform@{\ref{#1}}}}
\begin{document}
\maketitle
\begin{abstract}%
We introduce a machine-learning framework to warm-start fixed-point optimization algorithms.
Our architecture consists of a neural network mapping problem parameters to warm starts, followed by a predefined number of fixed-point iterations.
We propose two loss functions designed to either minimize the fixed-point residual or the distance to a ground truth solution.
In this way, the neural network predicts warm starts with the end-to-end goal of minimizing the downstream loss.
% We provide generalization bounds of two different kinds.
% First, we provide PAC-Bayes generalization bounds to unseen data for common classes of fixed-point operators: contractive, linearly convergent, and averaged operators.
% Second, our architecture is flexible in that we can evaluate the learned warm starts for a different number of steps than what we trained on.
% We use operator theory to give bounds on the fixed-point residual.
% \red{Our architecture is flexible in that we can initialize fixed-point algorithms for any number of steps, not limited to the number of steps trained on.}
An important feature of our architecture is its flexibility, in that it can predict a warm start for fixed-point algorithms run for any number of steps, without being limited to the number of steps it has been trained on.
We provide PAC-Bayes generalization bounds on unseen data for common classes of fixed-point operators: contractive, linearly convergent, and averaged.
Applying this framework to well-known applications in control, statistics, and signal processing, we observe a significant reduction in the number of iterations and solution time required to solve these problems, through learned warm starts.
\end{abstract}

\ifpreprint \else
\begin{keywords}%
  learning to optimize, fixed-point problems, warm start, generalization bounds, parametric convex optimization.
\end{keywords}
\fi

\section{Introduction}
\label{sec:intro}

We consider {\it parametric fixed-point problems} of the form
\begin{equation}\label{prob:fp}
  \mbox{find} \; z \quad \mbox{ such that } \quad  z = \op(z),
\end{equation}
where $z \in \reals^p$ is the decision variable and $\theta \in  \Theta \subseteq \reals^d$ is the {\it problem parameter} defining each instance of~\eqref{prob:fp} via the \emph{fixed-point operator}~$\op$.
We assume that $\theta$ is drawn from an unknown distribution $Q$, accessible only via samples, and that for every $\theta \in \Theta$, problem~\eqref{prob:fp} is a  solvable (\ie, $\op$ admits a fixed-point) convex optimization problem.
% \red{The set of solutions of problem~\eqref{prob:fp} is the set of solutions of convex optimization problems.}
% In this work,~\eqref{prob:fp} corresponds to a convex optimization problem.
Almost all convex optimization problems can be cast as finding a fixed-point of an operator~\citep{lscomo}, often representing the optimality conditions~\citep{cosmo,scs,osqp}.
To solve problem~\eqref{prob:fp}, we repeatedly apply the {operator} $\op$, obtaining the iterations
\begin{equation}\label{eq:fp_algo}
  z^{i+1} = \op(z^i).
\end{equation}
We assume that the iterations~\eqref{eq:fp_algo} converge to a fixed-point, that is, $\lim_{i \rightarrow \infty}\|z^{i}-z^\star(\theta)\|= 0$, where $z^\star(\theta)$ is a fixed point of $\op$. In practice, it is common to return an $\epsilon$-approximate solution, corresponding to a vector $z^i$ for which the {\it fixed-point residual}, $\|\op(z^i) - z^i\|_2$ is below $\epsilon.$
%To attain a solution of a desired precision, one can execute the fixed-point iterations until the {\it fixed-point residual}, $\|\op(z^i) - z^i\|_2$, is below a predetermined threshold.
Many optimization algorithms correspond to fixed-point iterations of the form~\eqref{eq:fp_algo}; see~Table~\ref{table:fp_algorithms} for some examples.

\begin{table}[!h]
  \centering
  \caption{Many optimization algorithms can be written as fixed-point iterations.}
  \label{table:fp_algorithms}
  \adjustbox{max width=\textwidth}{
  \begin{threeparttable}
  \begin{tabular}{@{}llll@{}}
    \toprule[\heavyrulewidth]
    Algorithm & Problem & Iterates & Fixed-point operator $\op$ \\
    \midrule[\heavyrulewidth]
    Gradient descent & $\begin{array}{@{}ll}\min &\fth(z)\end{array}$ & $z^{i+1} = z^i - \alpha \nabla f_{\theta}(z^i)$ & $\op(z) = z - \alpha \nabla f_{\theta}(z)$ \\
    \midrule
    % \makecell[l]{Proximal gradient descent \\ \citep{prox_algos}}
    Proximal gradient descent
    & $\begin{array}{@{}ll}\min & \fth(z) + \gth(z)\end{array}$ & $z^{i+1} = \prox_{\alpha \gth} (z^i - \alpha \nabla f_{\theta}(z^i))$ & $\op(z) = \prox_{\alpha \gth} (z - \alpha \nabla f_{\theta}(z))$ \\
    \midrule
    \makecell[l]{ADMM \\ \citep{dr_splitting} \\ \citep{Gabay1976ADA}} & $\begin{array}{@{}ll}\min &\fth(u) + \gth(u)\end{array}$ & $\begin{aligned}
            \tilde{u}^{i+1} &= \prox_{g_{\theta}}(z^i)\\
            u^{i+1} &= \prox_{f_{\theta}}(2 \tilde{u}^{i+1} - z^i)\\
            z^{i+1} &= z^i + u^i - \tilde{u}^i
          \end{aligned}$ & $\op(z) = z + \prox_{f_{\theta}}(2\prox_{g_{\theta}}(z) - z)$ \\
    \midrule
    \makecell[l]{OSQP \\ \citep{osqp}} & $\begin{array}{@{}ll}
    \min & (1/2) x^T Px + c^T x\\
    \mbox{s.t.}& l \leq Ax \leq u \\
    &\\&\\
    \text{with}&\theta = (\vect(P),\vect(A), c, l, u)\\
    \end{array}$
    &
    $\begin{aligned}
      &\text{solve }
      Q x^{i+1} = \sigma x^i - c + A^T (\rho w^i - y^i)\\
      &w^{i+1} = \Pi_{[l,u]}(A x^i + \rho^{-1} y^i)\\
      &y^{i+1} = y^i + \rho(Ax^{i+1} - w^{i+1})\\
      &\\
      &\text{with}\quad Q = P + \sigma I + \rho A^T A &
    % $\op(z)$ where $z=(x,Ax + \rho^{-1}y)$
    \end{aligned}$ &
    % \end{aligned} \quad \text{with}\quad Q = P + \sigma I + \rho A^T A$ &
    $\op(z)$ where $z=(x,Ax + \rho^{-1}y)$\\
    \midrule[\lightrulewidth]
    \makecell[l]{SCS \\ \citep{scs_quadratic}} & $\begin{array}{@{}ll}
    \min & (1/2) x^T Px + c^T x\\
    \mbox{s.t.}& Ax + s = b \\
    & s \in \mathcal{K} \\
    &\\
      \text{with}&\theta = (\vect(P), \vect(A), c, b)
    \end{array}$ & $\begin{aligned}
      & \text{solve }Q \tilde{u}^{i+1} = z^i\\
      &u^{i+1} = \Pi_{\mathcal{C}}(2 \tilde{u}^{i+1} - z^i)\\
      &z^{i+1} = z^i + u^{i+1} - \tilde{u}^{i+1}\\
      &\\
      &\text{with} \quad Q = \begin{bmatrix}
          P + I & A^T\\
          -A & I
        \end{bmatrix}\\
        &\hspace{3em} \mathcal{C} = \reals^n \times \mathcal{K}
    \end{aligned}$
    & $\begin{aligned}
        &\op(z) \text{ where } z \text{ is the} \\
        &\text{dual variable to } u=(x,y)\\
      \end{aligned}$
    \\
    % \midrule[\lightrulewidth]
    % \makecell[l]{Chambolle-Pock \\ \citep{osqp}} & $\begin{array}{@{}ll}
    %   \min & (1/2) x^T Px + c^T x\\
    %   \mbox{s.t.}& l \leq Ax \leq u \\
    %   &\\
    %   & \theta = (\vect(P), \vect(A), c, l, u)
    %   \end{array}$
    %   &
    %   $\begin{aligned}
    %     &\text{solve }
    %     Q x^{i+1} = \sigma x^i - c + A^T (\rho w^i - y^i)\\
    %     &w^{i+1} = \Pi(A x^i + \rho^{-1} y^i)\\
    %     &y^{i+1} = y^i + \rho(Ax^{i+1} - w^{i+1})\\
    %     &\\
    %     &\text{with}\quad Q = P + \sigma I + \rho A^T A &
    %   % $\op(z)$ where $z=(x,Ax + \rho^{-1}y)$
    %   \end{aligned}$ &
    %   % \end{aligned} \quad \text{with}\quad Q = P + \sigma I + \rho A^T A$ &
    %   $\op(z)$ where $z=(x,Ax + \rho^{-1}y)$\\
    \bottomrule[\heavyrulewidth]
  \end{tabular}
  \begin{tablenotes}
  \item  We denote~\prox{} as the proximal operator~\citep{prox_algos} and~$\vect$ as the vectorization operator stacking the columns of a matrix (See the notation paragraph in~\ref{sec:intro} for formal definitions). See Appendix~\ref{app:fp_algos} for more information on the algorithms in this table.
  \end{tablenotes}
  \end{threeparttable}
  }
\end{table}

% \bnote{I would slightly edit citations. ADMM: use old original papers on DR/ADMM. For gradient and prox gradient no need for references here. We can give it in the appendix.}

    % $\begin{aligned}
    %   \op(z) where z is the dual variable to u=(x,y)\\
    % \end{aligned}$
    % \hspace{7.5em} \text{with}\quad \begin{aligned} &Q = \begin{bmatrix}
    %   P + I & A^T\\
    %   -A & I
    % \end{bmatrix}\\
    % & \mathcal{C} = \reals^n \times \mathcal{K}
    % \end{aligned}$

\paragraph{Applications.}
{\it Parametric fixed-point problems} arise in several applications in machine learning, operations research, and engineering, where we repeatedly solve a problem of the form~\eqref{prob:fp} with varying parameter $\theta$.
For example, in optimal control, we update the inputs (\eg, propeller thrusts) as sensor signals (\eg, system state) and goals (\eg, desired trajectory) vary \citep[\Sec~7.1]{borrelli_mpc_book}.
Other examples include backtesting financial models~\citep{cvx_portfolio}, power flow optimization~\citep{ml_opf,ml_opf_smart}, and image restoration~\citep{learned_dictionaries_images}.
% While we focus on the convex optimization setting in this work, fixed-point problems arise in other domains, where the results in this paper are also applicable.
% While we focus on the convex optimization setting in this work, we could apply our method to fixed-point problems that arise in other domains.
In non-convex optimization, finding a stationary point can also be cast as a fixed-point problem~\citep{admm_nonconvex,admm_nonconvex2}.
% fixed-point problems can be cast as stationary points~\citep{admm_nonconvex,admm_nonconvex2}.}
% In non-convex optimization, fixed-point iterations are used to find stationary points~\citep{admm_nonconvex,admm_nonconvex2}.}
% the problem of finding stationary points can be cast as a fixed-point problem~\citep{admm_nonconvex,admm_nonconvex2}.}
% More broadly, almost all convex optimization problems can be cast as finding a fixed-point of an operator~\citep{lscomo}, often representing the optimality conditions~\citep{cosmo,scs,osqp}.
In game theory, finding the Nash equilibrium of a multi-player game can be formulated as a fixed-point problem under some mild assumptions on the utility functions of each player~\citep{nash_operator,mon_primer}.
Finding fixed-points are also important in other areas, such as finding the optimal policy of Markov decision processes~\citep{bellman_dp} and solving variational inequality problems~\citep{rockefeller_variational,bauschke2011convex}.

\paragraph{Acceleration.}
In spite of the widespread use of fixed-point iterative algorithms, they are known to suffer from slow convergence to high-accuracy solutions~\citep{zhang2020globally}.
Acceleration schemes \citep{zhang2020globally,anderson_acceleration,accel_survey,super_scs} are an active area of research designed to speed up the solving of fixed-point problems.
These methods, such as Anderson acceleration \citep{anderson_acceleration,zhang2020globally},
combine past iterates to generate the next one in order to improve the convergence behavior.
% Despite very specific cases such as Nesterov acceleration~\citep{nesterov}, it is still an open research question as to whether one can design acceleration schemes that are robust and versatile.
Although acceleration methods are known to work well in certain cases, such as Nesterov acceleration to solve smooth, convex optimization problems, it is still an open research question to design schemes that are robust and versatile.

\paragraph{Learning for optimization.}
Instead of designing acceleration methods for single problems, recent approaches take advantage of the parametric structure of fixed-point problems encountered in practice to learn efficient solution methods.
% Instead of accelerating the solution of a single problem, recent approaches take advantage of the parametric structure of fixed-point problems encountered in practice to learn efficient solution methods.
In particular, they {\it learn algorithm steps} using data from past solutions~\citep{amos_tutorial,l2o}.
Despite recent successes in a variety of fields, \eg, in sparse coding~\citep{lista,alista}, convex optimization~\citep{rlqp,neural_fp_accel_amos}, and meta-learning~\citep{learning_to_optimize_malik,maml}, most of these approaches lack {\it convergence guarantees} because they directly alter the algorithm iterations with learned variants~\citep{l2o,amos_tutorial}.
Although some efforts have been made to safeguard the learned iterations~\citep{safeguard_l2o,safeguard_convex, banert2021accelerated}, guaranteeing convergence for general learned optimizers is still a challenge.
In addition, most of these approaches do not provide {\it generalization guarantees} on unseen data~\citep{l2o,amos_tutorial}.

Another data-driven approach to reduce the number of iterations is to {\it learn warm starts} rather than the steps of the algorithm~\citep{mpc_primal_active_sets,warm_start_power_flow}.
An advantage to learning warm starts as opposed to algorithm steps is that this approach can be integrated with existing algorithms that provably converge from any starting point.
% For instance, \cite{mpc_primal_active_sets} uses a neural network to learn the optimal control policy and warm-starts an online active set method to guarantee feasibility.
% Random forests have shown shown to be an effective method in warm-starting the optimal power flow  problem \citep{warm_start_power_flow}.
However, existing methods to learn warm starts still lack generalization guarantees.
They also decouple the learning procedure from the algorithm behavior after warm-starting, which can lead to suboptimality and infeasibility issues on unseen problem instances.

% Therefore, they do not account for the structure of the algorithms steps in the training procedure, thereby crea
% application-specific, sometimes suffer from convergence issues, and generally don't come with generalization guarantees.
% Importantly, these approaches do not account for how the prediction will be used by the algorithm downstream.

\paragraph{Our contributions.}
We present a learning framework that predicts warm starts for iterative algorithms of the form~\eqref{eq:fp_algo}, which solve parametric fixed-point problems of the type given in~\eqref{prob:fp}.
% fixed-point optimization algorithms to solve parametric problems.
The framework consists of two modules.
The first module maps the parameter to a warm start via a neural network, and the second runs a predefined number of steps of the fixed-point algorithm.
We propose two loss functions.
The first one is the \emph{fixed-point residual} loss which directly penalizes the fixed-point residual of the output of the architecture.
The second one is the \emph{regression} loss which penalizes the distance between the output of the architecture and a given ground truth fixed-point (among possibly many).
% \bnote{what happens if you have multiple fixed-points? reviewers might ask at this point.}

Compared to existing literature on learning warm starts, we train our architecture by differentiating through the fixed-point iterations.
In this way, we construct warm-start predictions that perform well after a specific number of fixed-point iteration in an {\it end-to-end} fashion.
Furthermore, after training, our architecture allows the flexibility of selecting an arbitrary number of fixed-point iterations to perform and is not limited to the number it was originally trained on.

% We highlight here two features of our method. The first difference, which distinguishes our method from the existing literature on learning warm starts, is that we differentiate through the fixed-point iterations to improve the warm-start prediction.
% Thus, the warm-start predictions are tailored to perform well after many steps of the fixed-point algorithm, and our method constitutes an end-to-end learning scheme.
% Moreover, our architecture is flexible in that we can initialize parametric problems with our learned warm-start for any number of fixed-point steps, not limited to the number it was originally trained on.
% Second, our architecture is flexible in that we can initialize fixed-point algorithms with our learned warm-start for any number of fixed-point steps, not limited to the number it was originally trained on.

By combining operator theory with the PAC-Bayes framework~\citep{McAllester_PAC_Bayes,Shawe-Taylor_PAC_Bayes}, we provide two types of guarantees on the performance of our framework.
First, we give bounds on the fixed-point residual when we apply our framework to an arbitrary number of steps, larger than the number used during training.
% (possibly different from the ones used during training).
Second, we provide generalization bounds to unseen problems for common classes of operators: contractive, linearly convergent, and averaged.

%by leveraging standard operator theory convergence results and the PAC-Bayes framework. We on how well our framework performs when it is used with a larger number of steps than the number it was originally trained on. We provide generalization bounds for our method based on the PAC-Bayes framework \citep{McAllester_PAC_Bayes,Shawe-Taylor_PAC_Bayes} to ensure good performance on unseen samples.

Finally, we apply our framework to a variety of algorithms including gradient descent, proximal gradient descent, and the alternating direction method of multipliers.
In our benchmarks, we show that our learned warm starts lead to a significant reduction in the required number of iterations used to solve the fixed-point problems.
% We remark that our learned warm-starts can be directly inputted into state-of-the-art solvers such as SCS \citep{scs} and OSQP \citep{osqp} whose code has been written in C.
We also demonstrate compatibility with state-of-the-art solvers by learning architectures specifically tailored to SCS~\citep{scs} and OSQP~\citep{osqp}, and inputting warm starts into the corresponding C implementations.

\paragraph{Notation.}
% We denote the set of non-negative vectors and vectors with positive entries of length $n$ to be $\reals_+^n$ and $\reals^n_{++}$ respectively.
We denote the set of non-negative vectors of length $n$ as $\reals_+^n$, and the set of vectors with positive entries of length $n$ as $\reals_{++}^n$.
We let the set of $n \times n$ positive semidefinite and positive definite matrices be $\symm_+^n$ and $\symm_{++}^n$ respectively.
We define the set of fixed-points of the operator $T$, assumed to be non-empty, as $\fix T$.
For any closed and convex set $S$, we denote $\dist_{S}: \reals^n \rightarrow \reals$ to be the distance function, where $\dist_{S}(x) = \min_{s \in S} \|s - x\|_2$.
For any set $S \subset \reals^n$, we define the indicator function $\mathcal{I}_{S}: \reals^n \rightarrow \reals \cup \{+\infty\}$ where $\mathcal{I}_{S}(x) = 0$ if $x \in S$ and $\mathcal{I}_{S}(x) = +\infty$ otherwise.
We take $k$ applications of any single-valued operator $T$ to be $T^k: \reals^n \rightarrow \reals^n$.
For any matrix $A$, we denote its spectral norm and Frobenius norm with $\|A\|_2$ and $\|A\|_F$ respectively.
For a matrix $Z \in \reals^{m \times n}$, $\vect(Z)$ is the vector obtained by stacking the columns of $Z$.
For a symmetric matrix $Y \in \symm^{n}$, $\vect(Y)$ is the vector obtained by  taking the upper triangular entries of matrix $Y$.
We let the all-ones vectors of length $n$ be $\mathbf{1} \in \reals^n$.
% We define $\prox_h : \reals^n \rightarrow \reals^n$ as the proximal operator, \ie,
% \begin{equation*}
%     \prox
% \end{equation*}
The proximal operator, $\prox_h : \reals^n \rightarrow \reals^n$, of $h$ is defined as \citep{prox_algos}
\begin{equation*}
  \prox_h(v) = \argmin_x \Bigl(h(x) + (1/2)\|x - v\|_2^2\Bigr).
\end{equation*}

% \bnote{Try to write fractions inline whenever possible. This will save space.}

% $\vect(Z)$ for $Z \in \reals^{m \times n}$ denotes the vector obtained by stacking the columns of matrix $Z$ and $\vect(Y)$ for $Y \in \symm^{n}$ denotes the vector obtained by taking the upper triangular entries of matrix $Y$.

\paragraph{Outline.}
We structure the rest of the paper as follows.
In \Sec~\ref{sec:related_work}, we review some related work on learned solvers.
% In \Sec~\ref{sec:examples}, we show that many popular algorithms can be cast as fixed-point algorithms and thus can be used in our learning framework.
In \Sec~\ref{sec:framework}, we present our learning to warm-start framework.
In \Sec~\ref{sec:PAC_Bayes}, we provide generalization guarantees to unseen data for our method.
In \Sec~\ref{sec:pick_k}, we discuss choosing the right architecture, namely the choice of loss function and the number steps to train on.
\Sec~\ref{sec:numerical_experiments} presents various numerical benchmarks.

\section{Related work}\label{sec:related_work}

\paragraph{Learning warm starts.}
% \red{address our previous work: QPs to general fixed-point problems, regression loss, PAC-Bayes }
A common approach to reduce the number of iterations of iterative algorithms is to learn a mapping from problem parameters to high-quality initializations.
\cite{l2ws_l4dc} learn warm starts for Douglas-Rachford splitting to solve convex quadratic programs (QPs).
While this work conducts end-to-end learning, our work is more general in scope since we consider fixed-point problems rather than QPs.
Additionally, we provide generalization guarantees for more cases of operators, and finally, we add a regression loss.
% generalization guarantees (not limited to contractive operators), and loss functions (use of the regression loss).
% While this work conducts end-to-end learning and is able to provide generalization guarantees, our work is more general in scope since we consider fixed-point problems.
% Additionally, we provide generalization guarantees for more than .
% Finally, we use two different types of loss functions.
% \red{transition to other works: not end-to-end}
% A common approach to reduce the number of iterations of iterative algorithms is to learn a mapping from problem parameters to high-quality initializations.
In contrast to our approach, most of the techniques to learn warm starts don't consider the downstream algorithm in the warm start prediction.
\cite{warm_start_power_flow} and \cite{mak2023learning} use machine learning to warm-start the optimal power flow problem.
% \cite{warm_start_power_flow} trains a random forest to predict a warm-start for the optimal power flow problem.
% \cite{mak2023learning} learns the warm start to solve the optimal power flow problem in a decentralized fashion.
In the model predictive control (MPC)~\citep{borrelli_mpc_book} paradigm,~\cite{mpc_primal_active_sets} use a neural network to accelerate the optimal control law computation by warm-starting an active set method.
Other works in MPC use machine learning to predict an approximate optimal solution and, instead of using it to warm-start an algorithm, directly ensure feasibility and optimality.
\cite{mpc_constrained_neural_nets} and \cite{deep_learning_mpc_karg} use a constrained neural network architecture that guarantees feasibility by projecting its output onto the QP feasible region.
\cite{borrelli_mpc_primal_dual} uses a neural network to predict the solution while also certifying suboptimality of the output.
Our paper differs from these works in that the training of the neural network we propose is designed to minimize the loss after many fixed-point steps, allowing us to improve solution quality. %after many algorithm steps have been applied.
Our work is also more general in scope since we consider general parametric fixed-point problems.
Finally, we provide generalization guarantees to unseen data which other works lack.

% There are many existing methods that
% \begin{itemize}
%   % \item \citep{l2ws_l4dc}: us
%   \item \citep{deep_learning_mpc_karg}
%   \item \citep{warm_start_power_flow}: warm-start optimal power flow
%   \item \citep{explicit_mpc_neural_net}: explicit mpc with neural networks
%   \item \citep{learn_active_sets}: learning active sets
% \end{itemize}

% \paragraph{Amortized optimization}
% \begin{itemize}
%   \item We propose two different loss functions in \Sec~\ref{sec:loss_functions}, keeping in line with the main bifurcation of learning methods mentioned in \citep[\Sec 2.2]{amos_tutorial} between objective-based and regression losses.
%   \item relationship to l2o and meta-learning
% \end{itemize}

\paragraph{Learning algorithm steps for convex optimization.}
% Another technique for learning to solve optimization problems, an area that has been called amortized optimization~\citep{amos_tutorial} and learning to optimize~\citep{l2o}, is to learn the algorithm steps rather than the warm start.
In the area of learning to optimize~\citep{l2o} or amortized optimization~\citep{amos_tutorial}, a parallel approach to learning warm starts consists in learning the algorithm steps themselves to solve convex optimization problems.
\cite{rlqp} and \cite{qp_accelerate_rho} use reinforcement learning to solve quadratic programs quickly by learning high-quality hyperparameters of algorithms.
\cite{neural_fp_accel_amos} learns to accelerate fixed-point problems that correspond to convex problems quickly.
One risk of some of these approaches %to learn algorithm steps
is that convergence may not be guaranteed~\citep{amos_tutorial}.
To solve this problem, some works safeguard learned optimizers to guarantee convergence by reverting to a fallback update if the learned update starts to diverge~\citep{safeguard_convex,safeguard_l2o}.
Other strategies guarantee convergence by making sure that the learned algorithm does not deviate too much from a known convergent algorithm~\citep{banert2021accelerated} or by providing convergence rate bounds~\citep{learn_mirror}.
In addition to convergence challenges, approaches that learn algorithm steps generally do not have generalization guarantees to unseen data~\citep{amos_tutorial,l2o}.
Lastly, these methods generally cannot interface with existing algorithms that are written in C.

\paragraph{Learning algorithm steps beyond convex optimization.}
% Rather than learning algorithm steps to solve an optimization problem, there has been a surge in recent years to learn algorithm steps to solve inverse problems.
Many works have learned algorithm steps for problems outside of convex optimization.
For example, in non-convex optimization, \cite{sjolund2022graphbased} use graph neural networks~\citep{GNNBook2022} to accelerate algorithms to solve matrix factorization problems, and \cite{bai2022neural} learn the acceleratation scheme to solve fixed-point problems quickly.
The idea of learning algorithm steps has ventured beyond optimization.
There has been a surge in recent years to learn algorithm steps to solve \emph{inverse problems}, that is, problems where one wishes to recover a true signal, rather than minimizing an objective~\citep{l2o}.
This is typically done by embedding algorithm steps or reasoning layers~\citep{reasoning_layer} into a deep neural network and has been applied to various fields such as sparse coding~\citep{lista,alista,glista}, image restoration~\citep{Diamond2017UnrolledOW,denoise_prior,one_network}, and wireless communication~\citep{deep_learning_mimo,deep_unfolding_wireless}.
A widely used technique involves \emph{unrolling} algorithmic steps~\citep{algo_unrolling}, meaning differentiating through these steps to minimize a performance loss.
While we also unroll algorithm steps, our work is different in scope since we aim to solve optimization problems rather than inverse problems, and in method since we learn warm starts rather than algorithm steps.
Additionally, generalization and convergence remain issues in the context of learning to solve inverse problems~\citep{l2o,amos_tutorial}.
% Additionally, by learning warm starts, we circumvent generalization and convergence issues that plague learning algorithm steps to solve inverse problems.

\paragraph{Learning surrogate optimization problems.}
Instead of solving the original parametric problem, several works aim to learn a surrogate model of large optimization problems.
Then, an approximate solution can be obtained by solving the simpler or smaller optimization problem.
For instance, \cite{surrogate_learning} learn a mapping to reduce the dimensionality of the decision variables in the surrogate problem.
\cite{kolouri_learn_cons} use a neural approximator with reformulation and relaxation steps to solve linearly constrained optimization problems.
Other works predict which constraints are active~\citep{learn_active_sets} and the value of the optimal integer solutions~\citep{voice_optimization,online_milliseconds}.
In contrast, our approach refrains from approximating any problem; instead, we warm-start the fixed-point iterations.
This allows us to clearly quantify the suboptimality achieved within a set number of fixed-point iterations.

\paragraph{Meta-learning.}
% Meta-learning or learning to learn [cite]
% When the tasks are general machine learning tasks, e.g., determining the parameters of a prediction model by minimizing its training loss, the learn overlaps with meta-learning [214; 90; 44].
Meta-learning~\citep{hospedales2020metalearning,meta_learning_survey,ruder2017overview} or learning to learn overlaps with the learning for optimization literature when the tasks are general machine learning tasks~\citep{l2o}.
A wide array of works learn the update function to gradient-based methods to speed up machine learning tasks with a variety of techniques including reinforcement learning~\citep{learning_to_optimize_malik}, unrolled gradient steps~\citep{learn_learn_gd_gd}, and evolutionary strategies~\citep{VeLO}.
% A wide array of works have aimed to speed up machine learning tasks by learning the update function in gradient-based methods~\citep{learn_learn_gd_gd,VeLO,learning_to_optimize_malik}.
More in the spirit of our work, \cite{maml}~learn the initial model weights so that a new task can be learned after only a few gradient updates.
While the initialization of the model weights for their method is shared across the tasks, in our method we, instead, predict the warm start from the problem parameter.
This tailors our initialization to the specific parametric problem under consideration.
% Our work differs in that the learned initialization is not shared across problems; instead we predict it from the problem parameter.
% that we predict the warm start from the problem parameter which is not shared.
% \red{fix}
% Our work differs in scope since we consider fixed-point problems rather than learning tasks.
% Additionally, it is more general in the type of algorithm used since we unroll fixed-point iterations rather than gradient updates.
% While we can use our method to tackle some machine learning problems, our approach can more generally be applied to any fixed-point problem.
\paragraph{Algorithms with predictions.}
Another area that uses machine learning to improve algorithm performance is algorithms with predictions~\citep{algos_with_predictions,learned_index_structures,khodak2022learning}.
Here, algorithms take advantage of a possibly imperfect prediction of some aspect of the problem to improve upon worst-case analysis.
This idea has been applied to many problems such as ski-rental~\citep{ski_rental}, caching~\citep{caching}, and bipartite matching~\citep{fast_matching}.
% An area of research that has gained attention is algorithms with predictions~\citep{algos_with_predictions,learned_index_structures} where algorithms take advantage of possibly imperfect prediction of some aspect of the problem to improve upon worst-case analysis.
% For instance, \cite{fast_matching} and \cite{learned_algos_pred_ws_discrete_convex} use predictions to warm-start algorithms to solve discrete problems such as weighted bipartite matching problem.
% However, the task we consider is fundamentally different since we don't consider the setting where we are given a prediction of some aspect of the problem; rather, the goal is to solve a parametric problem as quickly as possible.
Even though the prediction can be used to improve the warm start for algorithms~\citep{fast_matching,learned_algos_pred_ws_discrete_convex}, the task we consider is fundamentally different since we aim to solve parametric problems as quickly as possible rather than to take advantage of a prediction.

\paragraph{Generalization guarantees.}
The generalization guarantees we provide use a PAC-Bayes framework, which has been used in prior work in the amortized optimization setting~\citep{pac_bayes_gen_l2o,bartlett2022generalization}.
\cite{reasoning_layer} provide generalization guarantees for architectures with reasoning layers, using a local Rademacher complexity analysis.
However, to the best of our knowledge, generalization guarantees have not been obtained with methods that aim to solve fixed-point problems quickly.
Additionally, the bounds from these works mentioned above are obtained in methods where the algorithm steps are learned rather than the warm start.
Unlike~\citet{l2ws_l4dc} which focused on solving QPs, we obtain guarantees in the non-contractive case by using the PAC-Bayes framework rather than Rademacher complexity theory.

\section{Learning to warm-start framework}\label{sec:framework}
We now present our learning framework to learn warm starts to solve the parametric fixed-point problem~\eqref{prob:fp}.
% We assume that the operator $\op$ is non-expansive and that the set of fixed-points is non-empty.
% \begin{definition}[Nonexpansive operator]
%   An operator $T$ is non-expansive if
%   \begin{equation*}
%     \|Tx - Ty\|_2 \leq \|x - y\|_2 \quad \forall x, y \in \textbf{dom}(T).
%   \end{equation*}
% \end{definition}
% Since non-expansiveness only is not enough to guarantee convergence, we assume that $\op$ is either contractive, linearly convergent, or averaged.
% See \Sec~\ref{subsec:perturb_bounds} for the definitions of these three types of operators.
A key feature of our framework is the inclusion of a predefined number of fixed-point steps within the architecture.
In this way, the warm-start predictions are tailored for the downstream algorithm, and we conduct end-to-end learning.
The section is organized as follows.
In \Sec~\ref{sec:illustrative_example}, we provide intuition as to why learning end-to-end can be beneficial through a small illustrative example.
In \Sec~\ref{sec:architecture}, we describe our architecture, and in \Sec~\ref{sec:loss_functions} we introduce the two different loss functions we consider.
A concise summary of these aspects is depicted in Figure~\ref{fig:nn_architecture}.
% \red{add diagram}
% \red{refer back to diagram}
% \red{in paranthesis add (\Sec 3.1)}

\subsection{An illustrative example}\label{sec:illustrative_example}
To build intuition, we provide a two-dimensional example that illustrates the importance of tailoring the warm-start prediction to the downstream algorithm.
Consider the problem,
\begin{equation}
  \begin{array}{ll}
  \label{prob:illustrative_ex}
  \mbox{minimize} & (1 / 2) z^TQ z\\
  \mbox{subject to} & z \geq 0, \\
  \end{array}
\end{equation}
where $Q = \diag(10, 1)$. We solve problem~\eqref{prob:illustrative_ex} using proximal gradient descent (see Table \ref{table:fp_algorithms}) with the iterates
\begin{equation*}
  z^{i+1} = \Pi(z^i - \alpha \nabla f(z^i))
\end{equation*}
% \bnote{I changed to this vector form to avoid confusions between components raised to the power of 2 and the second iterates of the algo}
where $\nabla f(z) = Qz$, $\alpha \in \reals_{++}$ is picked to get the fastest worst-case convergence rate~\citep{mon_primer}, and $\Pi$ is the projection onto the non-negative orthant.
The optimal solution for problem~\eqref{prob:illustrative_ex} is at the origin, and we consider three different warm starts shown in Figure~\ref{fig:illustrative_example}.
All three are equidistant to the optimal solution, but lead to different convergence behavior.
The purple warm start has the fastest convergence since the projection step clips non-negative values to zero.
The orange warm start converges more quickly than the green warm start due to the difference in scaling of the objective function along each axis.
% The coefficient of $10$ in front of $z_1^2$ makes the gradient larger in the $z_1$ direction.
This results in faster convergence for the orange warm start compared with the green one since the orange warm start is closer to the $z_1$ axis.
% the function steeper in the $z_1$ direction, causing faster convergence along that axis compared to the $z_2$ axis.
% First, the algorithm projects iterates onto the non-negative orthant, which results in the red warm start quickly converging to optimality.
% Second, when in the feasible region, proximal gradient descent converges differently for the two indices due to the differing scaling of the objective function along each axis: the coefficient of $10$ in front of $z_1^2$ makes the function steeper in the $z_1$ direction, causing faster convergence along that axis compared to the $z_2$ axis.
% This results in the green warm start converging faster than the blue one.
% Since the projection steps projects the initial blue point onto the non-negative orthant, the red point jumps to the optimal solution in one step.
% erges more rapidly compared with the blue point.
% Due to the difference in coefficients in the objective, the red iterates converge much more quickly than the green ones.
% This example can help build intuition as to why it's beneficial to consider the downstream algorithm when making warm-start predictions.
This example shows the necessity of considering the downstream algorithm when choosing a warm start.
All three warm starts in this case appear of equal quality as they are equidistant from $z^\star$, but when considering the downstream algorithm, there is a clear hierarchy in terms of convergence speed: purple takes the lead, followed by orange, then green.
% the red warm start clearly leads to much faster convergence to the solution.
\begin{figure}[!h]
  \centering
  \includegraphics[width=.6\columnwidth]{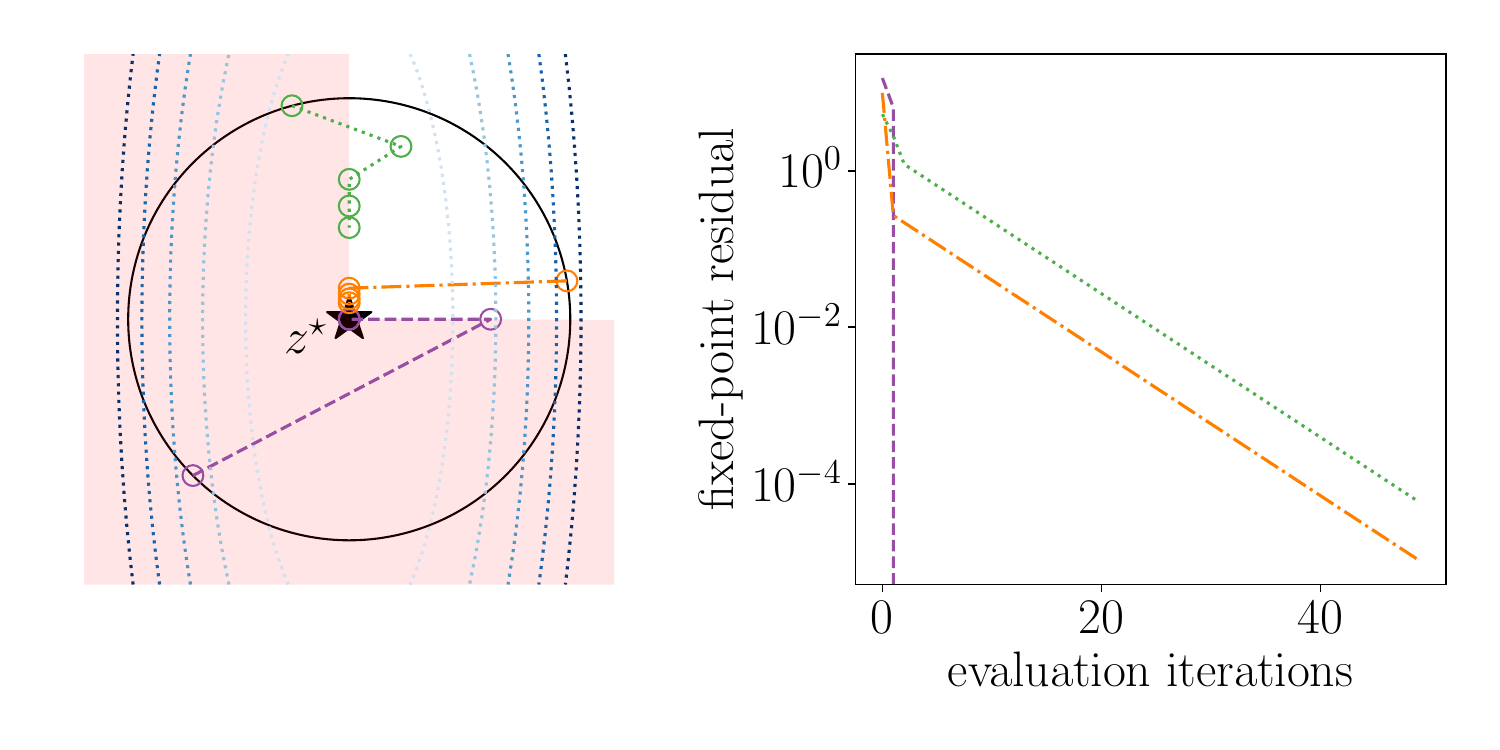}
  \vspace{-3mm}
  \caption{
  The iterates of proximal gradient descent to solve problem~\eqref{prob:illustrative_ex} with different warm starts.
  For three different warm starts equidistant to the optimal solution $z^\star$, we plot the first $5$ iterates on the left.
  % We also plot the contour lines of the objective function in blue and the shade the infeasible region in pink.
  The contour lines of the objective function are in blue and the infeasible region is shaded in pink.
  We plot the fixed-point residuals for the different warm starts on the right.
  Depending on the warm start, the convergence to the optimal solution, can vary greatly.
  }
  \label{fig:illustrative_example}
\end{figure}

% \bnote{Add 2d motivating example. You can use \url{https://github.com/ORF522/companion/blob/main/lectures/14_lecture/gradientdescent.ipynb}}

\subsection{Learning to warm-start architecture}\label{sec:architecture}
% To predict a warm-start for the optimization process, we use the parameter $\theta \in \reals^d$ as input to our $L$-layer neural network, which consists of a set of weights represented as $w = (\vect(W_0), \dots , \vect(W_L))$ where $\vect(Z)$ for $Z \in \reals^{m \times n}$ denotes the vector obtained by stacking the columns of matrix $Z$ and $\vect(Y)$ for $Y \in \symm^{n}$ denotes the vector obtained by taking the upper triangular entries of matrix $Y$.
% The goal of thi
% To solve the fixed-point problem in (2)\bnote{please reference equations using latex labels}, we iteratively apply the operator $\op$ to an initial point $z^0$ until we obtain a point $z$ for which $\|\op(z)-z\|_2$ is small.
% To accelerate the convergence of fixed-point iterations, we predict an initial point in such a way that we obtain a low fixed-point residual after a specific number of iterations of operator $\op$.
% This can be viewed as predicting a warm start to an optimization algorithm, where each step of the algorithm corresponds to the application of the operator $\op$; again, see \Sec~\ref{sec:examples} for some concrete examples.
% We propose a learning architecture to do so: this architecture is depicted in the left block of Figure~\ref{fig:nn_architecture} and consists of two main modules, a neural network and $k$ steps of the fixed-point operator.
Our learning architecture consists of two modules, a neural network with $L$ layers and~$k$ iterations of operator~$\op$; see Figure~\ref{fig:nn_architecture}.
% Since algorithm steps are in the architecture, the warm-start prediction is tailored to minimize the loss after.
\begin{figure}[!h]
  \centering
  % \vspace{-25mm}
  \includegraphics[width=.75\columnwidth]{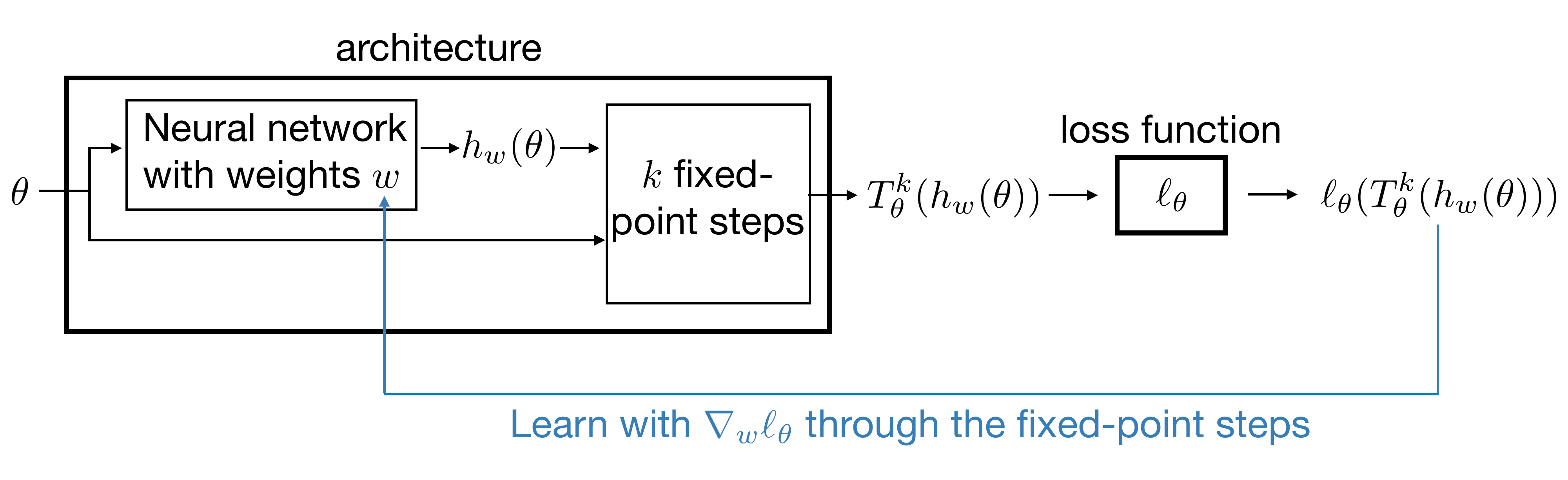}
  \caption{
  % Proposed learning framework depicting the architecture and the loss.
  Illustration of the learning framework.
  % Our architecture consists of two modules.
  % The first module is the neural network block which maps the parameter $\theta$ to a warm-start $\hw(\theta)$.
  % The second module runs $k$ fixed-point iterations (which also depend on $\theta$) starting with the warm-start $\hw(\theta)$ and returning a candidate solution~$\Tj[k][\hw(\theta)]$.
  The architecture consists of two modules: a neural network mapping the parameter $\theta$ to a warm start $\hw(\theta)$, and a second module executing $k$ fixed-point iterations starting from $\hw(\theta)$ to obtain the candidate solution $\Tj[k][\hw(\theta)]$.
  The fixed-point steps in the architecture depend on the parameter $\theta$, and have no learnable weights.
  There are two options for the loss function $\ell_{\theta}$: the fixed-point residual loss $\ell^{\rm{fp}}_{\theta}$, or the regression loss $\ell^{\rm{reg}}_{\theta}$.
  We backpropagate from the loss through the fixed-point iterates to learn the neural network weights $w$.}
  % \vspace{-1.5em}
  \label{fig:nn_architecture}
  \end{figure}
% The first part of our architecture is an L-layer neural network, with weights $W_0,...,W_L \in \reals^{dim}$ and ReLU activation functions defined as $\phi(x)=\max(0,x)$ for $x \in \reals^{dim}$, where the max is taken element-wise.
% The input to the neural network is the parameter theta in $\reals^d$.
% The output of the neural network corresponds to our predicted warm-start $z_0$ and is thus given by
% hw(theta)=W_Lphi blah, where w=(vec(W_0),..., vec(W_L)) is short-hand notation for the weights. (Recall that vec(Z) is blah.
% [Note: Maybe even put this as a footnote].
% In other words, we let $z_0=hw(\theta)$. [Note: Only if this makes sense to reintroduce z and z_0 - I think it does personally.]
% We make the following assumptions on our input parameter and neural network.
% [This is to clarify that these are assumptions and not definitions.]
% We assume that \bar{h} is an upper bound on the number of neurons blah.
% \bda{could consider moving this neural network definition elsewhere if we don't immediately use it and
%   just say $h_w$ is a model parameterized by $w$}
The neural network uses ReLU activation functions defined as $\phi(z) = \max(0, z)$ element-wise.
We let $w=\{W_i\}_{i=1}^L$ be the neural network weights for each layer where $W_i \in \reals^{m_i \times n_i}$.
Our warm-start prediction is computed as
% Letting $w=\{W_i\}_{i=1}^L$, where $W_i \in \reals^{m_i \times n_i}$, be the neural network weights for each layer, our warm start prediction is computed as
\begin{equation}\label{eq:hw}
  \hw(\theta) = W_L \phi(W_{L-1} \phi(\ldots \phi(W_1 \theta))).
\end{equation}
% Note that we can easily add bias terms to each layer by adding an extra column to each weight matrix and appending a one to each vector $\theta$.
% It is worth noting that we can readily incorporate bias terms into each layer, by appending a one to the vector $\theta$ and appending a new column to each weight matrix $W_i$ for $i=1,\ldots,L. $
While we do not explicitly represent bias terms, we can include them by appending a new column to matrices $W_i$ for $i=1,\dots,L,$ and a $1$ to the input vector.
% It is worth noting that we can readily incorporate bias terms into each layer, by appending a one to the input of each layer and a new column to each weight matrix $W_i$ for $i=1,\ldots,L. $
%We can do this in the first layer by appending a one to the vector $\theta$ and appending a new column to the first weight matrix $W_1$.
%Then repeat this process for the other layers.
% by augmenting the weight matrices.
% To do
% with an additional column and appending a value of one to the corresponding vector.
The warm-start prediction $\hw(\theta) \in \reals^p$ feeds into the fixed-point algorithm parametrized by $\theta$.
% \bda{say $w=\{W_i\}$ are the neural network parameters. can we also add the bias terms?}
% \bda{next sentence a little confusing since we don't immediately use $\bar h$}
% Here, $\bar{h}$ be an upper bound on the number of neurons in each layer including the input and output.
% We assume that $\|\theta\|_2 \leq B$ for all $\theta \in \Theta$.
% We also assume that our predictors are always within distance $D$ from the set of fixed-points, \ie, $\dist_{\fix \op}(\hw(\theta)) \leq D$.
% \bda{the $\fix$ notation is
% also not very well-defined or discussed}
The second part of our architecture consists of $k$ applications of the operator $\op$ to the warm start $\hw(\theta)$.
The final output is the candidate solution $\Tj[k][\hw(\theta)]$.
% , where $\Tj[k][\hw(\theta)]$ indicates that $\op$ has been applied $k$ times to $\hw(\theta)$.

\subsection{Loss functions}\label{sec:loss_functions}
% In the amortized optimization literature, there are two different classes of loss functions that can be used to learn \citep{amos_tutorial}.
% On one hand, there are \emph{fixed-point residual} losses which directly penalize the objective.
% On the other side, there are \emph{regression } losses which regress directly on a ground truth solution, $z^\star(\theta)$.
% We consider both classes of loss functions.
\paragraph{Training for $k$ steps.}
We propose two loss functions to analyze the output of our learning to warm-start architecture, $\Tj[k][\hw(\theta)]$.
% , as seen in Figure \ref{fig:nn_architecture}.
The first one is the \emph{fixed-point residual loss}
\begin{equation}\label{eq:fp_loss}
  \ell_{\theta}^{\rm{fp}}(z) = \|\op(z) - z\|_2,
\end{equation}
which measures of the distance to convergence of the fixed-point algorithm~\eqref{eq:fp_algo}~\citep[\Sec~2.4]{lscomo}.
The second one is the~\emph{regression loss}
\begin{equation}\label{eq:reg_loss}
  \ell_{\theta}^{\rm{reg}}(z) = \|z - z^\star(\theta)\|_2,
\end{equation}
where $z^\star(\theta)$ is a known (possibly non-unique) fixed-point of $\op$.
%, which for instance can be computed by solving the fixed-point problem.
% \bnote{state bound between reg and fp loss}
% Note that there can be multiple fixed-points for the same problem; we only need one.
% After picking a loss function, $\ell_{\theta}$, to be either $\ell_{\theta}^{\rm{fp}}$ or $\ell_{\theta}^{\rm{reg}}$, the learning problem is
The learning problem is
\begin{equation}\label{prob:learning}
  \begin{array}{ll} \mbox{minimize} &  \mathbf{E}_{\theta \sim \paramdist} \ell_{\theta}(\Tk(\hw(\theta))),
  \end{array}
\end{equation}
where $\ell_{\theta}$ is either $\ell_{\theta}^{\rm{fp}}$ or $\ell_{\theta}^{\rm{reg}}$, and $k$ is the number of fixed-point iterations in our architecture.
% The fixed-point  residual learning problem associated with loss function~\eqref{eq:fp_loss} is
% \begin{equation}\label{prob:obj}
%   \begin{array}{ll} \mbox{minimize} &  \mathbf{E}_{\theta \sim \paramdist} \ell_{\theta}^{\rm{fp}}(\Tk(\hw(\theta)))
%   \end{array}
% \end{equation}
% where we optimize over the weights of the neural network, $w$.
% \bnote{we should say that it is known, comes from past data, etc.} \bnote{what happens if it is not unique? (I am thinking about the nonconvex problem arising from the game)}
% The regression-based learning problem associated with loss function~\eqref{eq:reg_loss} is given by
% \begin{equation}\label{prob:reg}
%   \begin{array}{ll} \mbox{minimize} &  \mathbf{E}_{\theta \sim \paramdist}\ell_{\theta}^{\rm{reg}}(\Tk(\hw(\theta))).
%   \end{array}
% \end{equation}
Note that choosing $k=0$ decouples the learning procedure from the downstream algorithm, thereby making our architecture no longer end-to-end.
% with the regression loss does not constitute an end-to-end learning since the task to direct regresson the solutions without considering the downstream algorithm.
% Although our ultimate goal is to minimize the expectation of the loss after $k$ steps of the fixed-point algorithm of interest, evaluating this entity in general is infeasible.

It is generally infeasible to evaluate the objective in problem~\eqref{prob:learning} because the distribution~$Q$ is unknown.
Instead, we minimize its empirical estimate over training data hoping to attain generalization to unseen data.
% The empirical risk, $\hat{R}^k(w)$, is the average of the loss function computed over a set of $N$ samples with the neural network consisting of weights $w$.
% There are two issues with trying to solve problems \eqref{prob:obj} or \eqref{prob:reg} directly.
% First, we may not know what the distribution $Q$ is, but rather we have samples.
% Second, even if we knew the distribution $Q$, we may not be able to evaluate the objectives easily.
% Because of these issues, we minimize an empirical estimate of the loss function over $N$ training samples rather than directly minimizing the functions in problems \eqref{prob:obj} or \eqref{prob:reg}.
% Our approach leverages gradient-based methods to efficiently minimize the empirical risk.
% \bnote{expand more on using SGD, even give iterations in terms of grad L and give references.}
% Our approach leverages gradient-based methods to efficiently minimize the objective.
%  with updates of the form,
% \begin{equation*}
%   w^{j+1} \leftarrow w^j - \alpha^j \frac{1}{N} \sum_{i=1}^N \nabla_w \ell_{\theta_i}(\hw(\theta_i)),
% \end{equation*}
% where $\alpha^j$ is the learning rate at iteration $j$.
We leverage stochastic gradient descent (SGD) methods to efficiently train the neural network weights, by constructing stochastic approximations of the gradient of the empirical risk~\citep{opt_for_ml}.
% gradient methods like ADAM~\citep{adam} on mini-batches of stochastic approximations of the empirical risk while training~\citep{opt_for_ml}.
% We also use adaptive methods like~\citep{adam} which tunes the step size according to lower order moments.
To compute such gradient estimates, we use automatic differentiation~\citep{autodiff} techniques to differentiate through the $k$ fixed-point iterations.
% using automatic differentiation~\citep{autodiff} techniques.
% To compute the derivatives, we use automatic differentiation~\citep{autodiff}, a family of techniques that computes exact derivatives of compositions of differentiable mappings.
We note that due to the inclusion of ReLU layers and projection steps in the fixed-point algorithms (\eg,~the projection step in OSQP), there are non-differentiable mappings in the architecture.
At non-differentiable points, SGD uses subgradients~\citep{rockefeller_variational} to estimate directional derivatives of the loss.
% libraries such as JAX~\citep{jax} compute heuristic quantities instead of derivatives.
% See~\citet[\Sec~14]{autodiff_nondiff} for a discussion on non-differentiability as it relates to automatic differentation.
By tailoring the warm-start prediction to the downstream fixed-point algorithm, our framework constitutes an end-to-end learning scheme.

% We now turn our attention to evaluating the learned warm-start of an unseen parameter, $\theta$, given by $\hw(\theta)$.
% We now turn our attention to evaluating an unseen parametric problem parametrized by $\theta$.

\paragraph{Testing for $t$ steps.}
We now evaluate the learned model with $t$ fixed-point iterations ($t$ possibly different from $k$ used during training) on an unseen parameter~$\theta$.
While we consider two different loss functions for training, we always measure the test performance on unseen problems by the fixed-point residual since it is a standard measure of progress~\citep[\Sec~2.4]{lscomo}.
% Furthermore, our architecture allows us to consider an arbitrary number of fixed-point steps $t$ for evaluation, which does not have to match the number of steps $k$ used in training.
% evaluate the warm-start, $\hw(\theta)$, for a different number of fixed-point steps, which we refer to as $t$, than the number of steps $k$ we trained on.
To analyze the generalization of our architecture, we define the \emph{risk} as the following function of $t$:
\begin{equation}\label{eq:risk}
R^t(w) = \mathbf{E}_{\theta \sim \paramdist}\Lo^{\rm{fp}}(\Tj[t][\hw(\theta)]).
\end{equation}
% where the number of evaluation steps, $t$, can be different from the number of training steps, $k$.
% We approximate this value by taking the average fixed-point residual after $t$ steps over a test set of problems.
% \red{justification instead of "similarly", update equatio, refer back to this}
% Similarly, we define the \emph{empirical risk},
Since we only access the distribution $Q$ via $N$ samples $\theta_1, \dots, \theta_N$, we define the \emph{empirical risk} over training data as
\begin{equation}\label{eq:emp_risk}
  \hat{R}^t(w) = \frac{1}{N} \sum_{i=1}^N \Loit[\hw(\theta_i)].
  \end{equation}
% as the empirical estimate of the risk over the training set.
% \bnote{The phrases below this are confusing. When we talk about risk, it is natural to think about what happens for the same architecture on unseen data. In our paper we do more. We should first be upfront and say that our architecture is very flexible, and we can evaluate it for a different number of fp iterations than what we trained it for. Then, we should clearly state that we analyze (and clearly define mathematically what it means): 1) what happens for unseen data, and 2) what happens if we evaluate the operator for a number of steps $t$ different than what we trained it for.
% }

% \begin{itemize}
%   \item weighing pros and cons of different losses
% \end{itemize}

% Before discussing the advantages and drawbacks of each of the loss functions we propose, we make the following remark.
% \begin{remark}\label{remark:loss_fns}

% \end{remark}
% As indicated by Remark~\ref{remark:loss_fns}
% Choosing between loss functions that penalize suboptimality directly and those that penalize the distance to ground truth solutions is an active area of research.

\section{PAC-Bayes generalization bounds} \label{sec:PAC_Bayes}
% \paragraph{Generalization}
% \bda{add more of a summary of how we use these (e.g. in the next section/contrast again with Rademacher bounds}
In this section, we provide generalization bounds for our approach using the PAC-Bayes framework \citep{Shawe-Taylor_PAC_Bayes, McAllester_PAC_Bayes}.
% \bnote{For all equations, figures, lemmas.... if you reference a specific one, the name should be capitalized.}
More specifically, we provide a generalization guarantee on the risk in \Eqn~\eqref{eq:risk} after any number of evaluation steps $t$ ($t$ need not be equal to the number of fixed-point steps $k$ taken during training).
% The bound obtained holds with high probability.
% To make this distinction clear that our bounds are agnostic to the number of steps taken during training, we use the notation $t$ rather than $k$ in this section.
% By using this approach instead of Rademacher complexity, we provide generalization bounds for averaged operators and not just contractive ones as in \citep{l2ws_l4dc}.
% There are some important pieces of our proof technique.
% To adapt the PAC-Bayes framework to our learning warm-starts architecture, our proof technique consists of a few noteworthy pieces.
% A key ingredient is our marginal fixed-point residual, which measures the maximum loss incurred by a warm-start when subjected to a bounded perturbation.
% It provides a quantification of the worst-case scenario for the performance of the warm-start to the fixed-point algorithm under perturbations.
% We first use the PAC-Bayes framework to bound the risk in terms of the empirical marginal risk.
% We then use operator theory to remove the dependence on the marginal fixed-point residual, and write the generalization bounds in terms of the empirical risk.
% \bnote{the paragraph before and the next one repeat a lot. Maybe we can combine them?}

First, we introduce preliminary results and definitions needed for our proofs in \Sec~\ref{subsec:prelims}. In particular, we define the \emph{marginal fixed-point residual}, a key ingredient of our proof technique, which measures the maximum fixed-point residual incurred by a warm start when subjected to a bounded perturbation.
%The first is the McAllester bound, a popular PAC-Bayes generalization bound for the expected risk of a randomized estimator.
% The second preliminary is our marginal fixed-point residual, which quantifies the worst-case loss for a warm-start subjected to a bounded perturbation.
%The second is our marginal fixed-point residual, a key ingredient of our proof technique, which measures the maximum loss incurred by a warm-start when subjected to a bounded perturbation.
% , which measures the maximum loss incurred by a warm-start when subjected to a bounded perturbation. It provides a quantification of the worst-case scenario for the performance of the warm-start under perturbations.
Then, we derive our main generalization bound result, \Thm~\ref{thm:gen_pac}, in \Sec~\ref{subsec:gen_bounds}.
%, which bounds the risk in terms of the empirical marginal risk and a penalty term.
% and show that, as either the number of fixed-point steps $t$ or the number of training examples $N$ approaches infinity, the generalization gap disappears.
%To get \Thm~\ref{thm:gen_pac}, we first present \Lem~\ref{lemma:marg}, which bounds the risk in terms of the empirical marginal risk for any warm-start predictor.
% In \Sec~\ref{subsec:gen_bounds} we derive our main generalization bound result.
% We first present \Lem~\ref{lemma:marg}, which bounds the risk in terms of the empirical marginal risk for any warm-start predictor.
% Then, we use this lemma to present our main theorem, \Thm~\ref{thm:gen_pac}, for the architecture we propose from \Sec~\ref{sec:architecture} with ReLU neural network layers to predict the warm-start.
Finally, in \Sec~\ref{subsec:perturb_bounds}, we specialize \Thm~\ref{thm:gen_pac} to three different cases of operators: contractive, linearly convergent, and averaged.

\subsection{Preliminaries}\label{subsec:prelims}
In this subsection, we introduce our marginal fixed-point residual in \Eqn~\eqref{eq:marg_fp} and McAllester's bound in inequality~\eqref{eq:_pac_bayes}.

\paragraph{Marginal fixed-point residual.}
We define the marginal fixed-point residual to be the worst-case fixed-point residual for a warm start subjected to a  bounded perturbation:
\begin{equation}\label{eq:marg_fp}
  \gok(z) = \max_{\|\Delta\|_2 \leq \gamma} \Lot[z + \Delta].
\end{equation}
Similarly, we define the marginal risk and marginal empirical risk in the same way as for the non-marginal case from \Sec~\ref{sec:framework} with
% \red{data-driven second}
\begin{equation*} %\label{eq:marg_emp_risk}
  R^t_{\gamma}(w) = \mathbf{E}_{\theta \sim \paramdist}\gok(\hw(\theta)) \quad \text{and} \quad \hat{R}^t_{\gamma}(w) = \frac{1}{N} \sum_{i=1}^N \goik(\hw(\theta_i)).
\end{equation*}
Setting $\gamma = 0$ corresponds to the original fixed-point residual and risk functions, \ie, $g_{0,\theta}^t(z) = \Lot[z]$,
$\hat{R}^t_0(w)=\hat{R}^t(w),$ and $R^t_0(w)=R^t(w)$ from Equations~\eqref{eq:risk} and \eqref{eq:emp_risk}.

\paragraph{McAllester's bound.}
% The PAC-Bayesian framework provides generalization guarantees for randomized predictors, drawn form a learned distribution $Q$ (as opposed to a learned single predictor) that depends on the training data.
The PAC-Bayesian framework provides generalization bounds randomized predictors, as opposed to a learned single predictor.
Randomized predictors are obtained by sampling in a set of basic predictors based on a specific probability distribution~\citep{pac_bayes_intro}.
This is especially useful in our setting because we can manipulate the bounds on the randomized predictors into bounds on our learned predictors.

In our case, $\hw$ from~\Eqn~\eqref{eq:hw} corresponds to the fixed warm-start prediction parameterized by the weights of the neural network $w \in \mathcal{W}$ where $\mathcal{W}$ is a set of possible weights.
We aim to bound $R^t(w)$, the risk after $t$ fixed-point steps from \Eqn~\eqref{eq:risk}, in terms of empirical quantities.
To do so, we consider perturbations of the neural network weights given by the random variable $u$ whose distribution may also depend on the training data.
Now, we have a distribution of predictors $\hwp$, where $w$ is fixed and $u$ is random.
% We consider the distribution over predictors of the form $\hwp$, where $u$ is a random variable whose distribution may also depend on the training data.
% , and $w$ is the learned warm-start predictor that is now fixed.
Given a prior distribution $\prior$ over the set of predictors that is independent of the training data, the expected marginal risk of the randomized predictor $\mathbf{E}_u[R^t_{\gamma}(w+u)]$ can be bounded as~\citep{McAllester_simplified}
\begin{equation}\label{eq:_pac_bayes}
  \mathbf{E}_u[R^t_{\gamma}(w + u)] \leq \mathbf{E}_u[\hat{R}^t_{\gamma}(w + u)] + 2 C_{\gamma}(t) \sqrt{\frac{2({\rm KL}(w + u || \prior) + \log(2N/\delta))}{N - 1}},
\end{equation}
with probability at least $1 - \delta$.
Here ${\rm KL}(p||\pi)$ is the KL-divergence between the distributions $p=w + u$ and $\pi$,
\begin{equation*}
  {\rm KL}(p || \pi) = \int_{-\infty}^{\infty} p(x)\log \biggl(\frac{p(x)}{\pi(x)}\biggr).
\end{equation*}
The quantity $\Cgt$ upper bounds the fixed-point residual after $t$ steps, \ie,
\begin{equation*}
    g_{\gamma,\theta}^t(h_{w}(\theta))\le \Cgt,\quad \forall \theta \in \Theta, \; w \in \mathcal{W}.
\end{equation*}
% \begin{equation}
%     \ell^{\rm fp}_{\theta}(\op^t(h_{w}(\theta))\le C(t),\quad \forall \theta \in \Theta, \; w \in \mathcal{W}.
% \end{equation}
% \bnote{Shall we define the set where $w$ lives? In that way we can, for example, write $\forall \theta \in \Theta, w \in \mathcal{W}$.}
% for every $w$.
% and $C(t)$ is a bound on the fixed-point residual after $t$ steps: 
% \begin{equation}
%     C(t) \geq \max_{\theta \in \Theta, \hw \in \mathcal{H}} \ell^{\rm fp}_{\theta}(\op^t(\hw(\theta)).
% \end{equation}
% \bnote{$C(t)$ is first introduced here. However, we do not clearly state what it is. Also, it this appears several times in next sections. I would suggest adding a new equation to describe the relationship between the fixed-point residual after $t$ steps and $C(t)$ and referencing it throughout the paper.}
% and $KL(p||\pi)$ is the KL-divergence between the distributions $p=w + u$ and $\pi$,
% \begin{equation*}
%   KL(p || \pi) = \int_{-\infty}^{\infty} p(x)\log \biggl(\frac{p(x)}{\pi(x)}\biggr).
% \end{equation*}
Note that the marginal risk and empirical marginal risk lie in the range $[0,C_{\gamma}(t)]$.
In order to bound $\Cgt$, we will consider predictors where the distance from warm start to the set of fixed-points is upper bounded by $D$:
\begin{equation}\label{eq:D_bound}
    \dist_{\fix \op}(\hw(\theta)) \leq D\quad \forall \theta \in \Theta, \; w \in \mathcal{W}. 
\end{equation}
In \Sec~\ref{subsec:perturb_bounds}, we bound $\Cgt$ in terms of $t$, $\gamma$, $D$, and properties of the operator~ $\op$. 
% Since our risk, $R^t(w)$, from~\eqref{eq:risk} is expectation of the fixed-point residual given by \Eqn~\eqref{eq:fp_loss} after $t$ steps, it lies in the interval $[0, C(t)]$.
% Most of the PAC-Bayes bounds are meant for loss functions in ~$[0,1]$; we adapt the inequality to handle loss functions in $[0,C(t)]$.
% We adapt the inequality to handle loss functions bounded in the region $[0,C(t)]$: recall that our risk~\eqref{eq:risk} is based on the fixed-point residual given by \Eqn~\eqref{eq:fp_loss} and is therefore non-negative.
% To use inequality~\eqref{eq:_pac_bayes} to bound the risk~$R^t(w)$, we introduce the marginal fixed-point residual.
% The PAC-Bayes framework provides a nice framework for us to use because we are able to
% Towards achieving this, we introduce the marginal fixed-point residual.
% , rather than a bound on the expected risk of the randomized predictor, $\mathbf{E}_u[R^k(w + u)]$.
% \paragraph{Distance to optimality.}
% We define the distance from a point $z$ to the set of fixed-points, $\fix \op$, with
% \begin{equation}
%   r_{\theta}(z) = \dist_{\fix \op}(z).
% \end{equation}

\subsection{Generalization bounds}\label{subsec:gen_bounds}
In this subsection, we use the marginal fixed-point residual and the McAllester bound from~\Sec~\ref{subsec:prelims} to bound the generalization gap.
We first transform the McAllester bound in \eqref{eq:_pac_bayes}, which provides a generalization bound on the expected marginal risk of the randomized predictor, to a bound on the risk with the following lemma.
\begin{lemma}\label{lemma:marg}
  % Assume that $\|\theta\|_2 \leq B$ for all $\theta \in \Theta$.
  Let $\hw : \Theta \rightarrow \reals^p$ be any predictor to a warm start learned from the training data such that $g_{\gamma / 2,\theta}^t(\hw(\theta)) \leq \Cgtt, \; \forall \theta \in \Theta$.
  % \bnote{Something is missing here...}
  Let $\hw$ be any learned predictor parametrized by $w$ and $\prior$ be any distribution that is independent from the training data.
  Then, for any $\delta,\gamma > 0$, with probability at least $1 - \delta$ over a training set of size~$N$ and for any random perturbation $u$ such that ${\mathbf{P}(\max_{\theta \in \Theta} \|h_{w + u}(\theta) - h_{w}(\theta)\|_2 \leq \gamma / 2) \geq 1 / 2}$ we have
  \begin{equation*}
    R^t(w) \leq \hat{R}^t_{\gamma}(w) + 4 \Cgtt \sqrt{\frac{{\rm KL}(w + u|| \prior) + \log(6N / \delta)}{N - 1}}.
  \end{equation*}
\end{lemma}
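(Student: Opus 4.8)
The plan is to convert the PAC-Bayes bound~\eqref{eq:_pac_bayes} on the \emph{expected marginal} risk of the randomized predictor $h_{w+u}$ into a bound on the (non-marginal) risk $R^t(w)$ of the deterministic predictor $h_w$. The bridge is the event $\mathcal{E} = \{\max_{\theta \in \Theta}\|h_{w+u}(\theta) - h_w(\theta)\|_2 \leq \gamma/2\}$, which by hypothesis has probability at least $1/2$. First I would observe two pointwise inequalities that hold on $\mathcal{E}$: (i) since the perturbation moves the warm start by at most $\gamma/2$, and since $g^t_{\gamma/2,\theta}$ takes a max over perturbations of size $\gamma/2$, we get $\ell^{\rm{fp}}_\theta(T_\theta^t(h_w(\theta))) \leq g^t_{\gamma/2,\theta}(h_{w+u}(\theta))$ — i.e. the deterministic risk at $h_w$ is dominated by the marginal residual of the perturbed predictor; and (ii) symmetrically, $g^t_{\gamma/2,\theta}(h_{w+u}(\theta)) \leq g^t_{\gamma,\theta}(h_w(\theta))$, because a $\gamma/2$-ball around $h_{w+u}(\theta)$ is contained in the $\gamma$-ball around $h_w(\theta)$ when $\|h_{w+u}(\theta)-h_w(\theta)\|_2 \leq \gamma/2$ (triangle inequality on the perturbation directions). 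Taking expectations over $\theta \sim Q$ of (i) gives $R^t(w) \leq \mathbf{E}_\theta[g^t_{\gamma/2,\theta}(h_{w+u}(\theta))]$ on $\mathcal{E}$.

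Next I would handle the randomness in $u$ by conditioning. Let $S$ denote the (bad) event where inequality~\eqref{eq:_pac_bayes} \emph{fails}; by McAllester's bound $\mathbf{P}(S) \leq \delta$. Working on $\mathcal{E}$, we have $R^t(w) \leq \mathbf{E}_\theta[g^t_{\gamma/2,\theta}(h_{w+u}(\theta))] = R^t_{\gamma/2}(w+u)$. The standard PAC-Bayes-to-deterministic argument (cf.~\citealp{McAllester_simplified}, and its use in neural-net generalization bounds) is: since $R^t(w)$ is a fixed number not depending on $u$, and since $\mathbf{P}(\mathcal{E}) \geq 1/2 > \mathbf{P}(S) $ whenever $\delta < 1/2$, the events $\mathcal{E}$ and $S^c$ have nonempty intersection with probability at least $1/2 - \delta$; more usefully, $\mathbf{E}_u[R^t_{\gamma/2}(w+u) \mid \mathcal{E}] \geq R^t(w) \cdot \mathbf{P}(\cdot)$-type reasoning lets us write
\begin{equation*}
  R^t(w) \;\leq\; 2\,\mathbf{E}_u[R^t_{\gamma/2}(w+u)],
\end{equation*}
because on $\mathcal{E}$ (probability $\geq 1/2$) the integrand is at least $R^t(w)$ and everything is nonnegative. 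Here I would replace $\gamma$ by $\gamma/2$ throughout the instance of~\eqref{eq:_pac_bayes} we invoke, so that $C_\gamma(t)$ becomes $C_{\gamma/2}(t) = \Cgtt$ and $2N/\delta$ becomes (after absorbing the extra probability-$1/2$ union-bound loss) $6N/\delta$; the replacement of $2\delta$ or $3\delta$ by $\delta$ in the $\log$ is the usual bookkeeping that produces the constant $6$.

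Then I would chain: $R^t(w) \leq 2\mathbf{E}_u[R^t_{\gamma/2}(w+u)] \leq 2\big(\mathbf{E}_u[\hat R^t_{\gamma/2}(w+u)] + 2\Cgtt\sqrt{\tfrac{2(\mathrm{KL}(w+u\|\pi) + \log(6N/\delta))}{N-1}}\big)$ by~\eqref{eq:_pac_bayes} applied at level $\gamma/2$. It remains to replace the \emph{expected empirical marginal} risk $\mathbf{E}_u[\hat R^t_{\gamma/2}(w+u)]$ by the \emph{empirical marginal} risk $\hat R^t_\gamma(w)$ of the deterministic predictor: on $\mathcal{E}$, inequality (ii) above gives, for each training sample $\theta_i$, $g^t_{\gamma/2,\theta_i}(h_{w+u}(\theta_i)) \leq g^t_{\gamma,\theta_i}(h_w(\theta_i))$, hence $\hat R^t_{\gamma/2}(w+u) \leq \hat R^t_\gamma(w)$ on $\mathcal{E}$; since $\mathcal{E}$ has probability at least $1/2$ and the empirical marginal risk is bounded, a factor-$2$ argument of the same flavor (or a more careful conditioning) yields $\mathbf{E}_u[\hat R^t_{\gamma/2}(w+u)] \leq \hat R^t_\gamma(w) + (\text{a term absorbed into the square-root slack})$. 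Combining the factor $2$ from the risk side with the factor $2$ already multiplying the square root gives the claimed constant $4$ in front of $\Cgtt\sqrt{\cdots}$, and dropping the $\sqrt{2}$ vs.\ no-$\sqrt2$ by a suitable regrouping of constants gives exactly the stated inequality.

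The main obstacle is the careful probabilistic bookkeeping in the "factor of $2$" steps: we are simultaneously using the high-probability event $S^c$ from McAllester's bound and the probability-$\geq 1/2$ event $\mathcal{E}$ on the perturbation, and we must make sure that (a) these are combined correctly (via a union bound / conditioning, not by pretending they are independent), (b) the deterministic quantity $R^t(w)$ is correctly pulled out of the expectation over $u$, and (c) the empirical side is handled consistently with the risk side so that the final confidence is $1-\delta$ with the constant $6N/\delta$ inside the logarithm and $4$ in front of the complexity term. Everything else — the two pointwise inequalities (i) and (ii), and the monotonicity/boundedness of the marginal residual in $\gamma$ — is routine once the right events are set up.
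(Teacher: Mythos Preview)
Your two pointwise sandwich inequalities (i) and (ii) are exactly right, and they are the same ones the paper uses. The gap is in how you combine them with McAllester's bound. Your ``factor of $2$'' route gives
\[
R^t(w)\;\le\;2\,\mathbf{E}_u\bigl[R^t_{\gamma/2}(w+u)\bigr]
\;\le\;2\,\mathbf{E}_u\bigl[\hat R^t_{\gamma/2}(w+u)\bigr]\;+\;4\Cgtt\sqrt{\cdots},
\]
and then you still need to control $2\,\mathbf{E}_u[\hat R^t_{\gamma/2}(w+u)]$. On $\mathcal E^c$ the empirical marginal risk can be as large as $\Cgtt$, so the best you can get is
\[
2\,\mathbf{E}_u\bigl[\hat R^t_{\gamma/2}(w+u)\bigr]\;\le\;2\,\hat R^t_{\gamma}(w)\;+\;\Cgtt,
\]
leaving you with $R^t(w)\le 2\,\hat R^t_{\gamma}(w)+\Cgtt+4\Cgtt\sqrt{\cdots}$. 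The extra factor of $2$ on $\hat R^t_{\gamma}(w)$ and the additive $\Cgtt$ are not bookkeeping slack; they cannot be absorbed into the square-root term and they do not appear in the lemma. (Also, the ``bad'' event $S$ for McAllester lives in the training-sample space, not the $u$-space, so intersecting it with $\mathcal E$ the way you suggest does not help.)

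The fix, and what the paper actually does (following the argument in \citet{PAC_Bayes_nn}), is to \emph{change the posterior} rather than pay factors of $2$ in expectation. Define $\tilde Q$ to be the law of $w+u$ conditioned on the event $\mathcal E$; since $\mathbf P(\mathcal E)\ge 1/2$, the normalizing constant $Z\ge 1/2$. Under $\tilde Q$ both sandwich inequalities hold \emph{almost surely}, so
\[
R^t(w)\;\le\;\mathbf E_{\tilde w}\bigl[R^t_{\gamma/2}(\tilde w)\bigr]
\quad\text{and}\quad
\mathbf E_{\tilde w}\bigl[\hat R^t_{\gamma/2}(\tilde w)\bigr]\;\le\;\hat R^t_{\gamma}(w),
\]
with no multiplicative loss. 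You then apply McAllester's bound~\eqref{eq:_pac_bayes} to the posterior $\tilde Q$, and the entire price of the conditioning is paid inside the KL term via the elementary estimate ${\rm KL}(\tilde Q\|\pi)\le \bigl({\rm KL}(w+u\|\pi)+H(Z)\bigr)/Z\le 2\bigl({\rm KL}(w+u\|\pi)+1\bigr)$. Pulling the resulting factors through the square root produces the $4$ in front and turns $\log(2N/\delta)+1$ into $\log(6N/\delta)$. That restricted-posterior step is the missing idea in your plan.
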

See Appendix~\ref{proof:marg} for the proof.
In the above expression, $w$ is fixed and $u$ is a random variable.
This lemma bears resemblance to \citet[\Lem~1]{PAC_Bayes_nn}, and the proof is nearly identical.
Next, we use \Lem~\ref{lemma:marg} to obtain generalization bounds with our main theorem.
% \bnote{Shouldn't we add these assumptions inside theorem 2? Why before in a paragraph?}
% To facilitate our analysis, we first introduce some notation.
% Let $\bar{h}$ be the maximum the number of neurons in each layer including the input and output layers.
% \bnote{Can we define this in terms of the dimensions of the weights? It may be more specific. Not a big fan of counting neurons one by one}
 % and that our learned warm starts are always within distance $D$ from the set of fixed-points, \ie, $\dist_{\fix \op}(\hw(\theta)) \leq D$.

\begin{theorem}\label{thm:gen_pac}
    Assume that $\|\theta\|_2 \leq B$ for all $\theta \in \Theta$.
    Let $\hw: \Theta \rightarrow \reals^p$ be an $L$-layer neural network with ReLU activations where $g_{\gamma / 2,\theta}^t(\hw(\theta)) \leq \Cgtt, \; \forall \theta \in \Theta$.
    Let $c = B^2 L^2 \bh \log(L \bh) \Pi_{j=1}^L \|W_j\|_2^2 \sum_{i=1}^L \|W_i\|_F^2 / \|W_i\|_2^2$ and let $\bar{h} = \max_i n_i$ be the largest number of output units in any layer.
  % $\Lot[\hw(\theta)] \leq C(t),\; \forall \theta \in \Theta$.
  Then for any $\delta,\gamma > 0$ with probability at least $1 - \delta$ over a training set of size $N$,
  \begin{equation}\label{eq:gen_pac}
    R^t(w) \leq \hat{R}^t_{\gamma}(w) + \begin{cases}
      \mathcal{O} \Biggl(\Cgtt \sqrt{\frac{c + \log(\frac{L N}{\delta})}{\gamma^2 N}}\Biggr) & \text{if  } \Pi_{j=1}^L \|W_j\|_2 \geq \frac{\gamma} {2 B}\\
      \Cgtt \sqrt{\frac{\log(1 / \delta)}{2N}} & \text{else.}
    \end{cases}
  \end{equation}
\end{theorem}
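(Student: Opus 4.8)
The theorem is a specialization of Lemma~\ref{lemma:marg} to the concrete class of $L$-layer ReLU networks~\eqref{eq:hw}, so the strategy is to instantiate the prior $\prior$, the perturbation $u$, and the margin parameter $\gamma$ in Lemma~\ref{lemma:marg}, then bound the resulting $\mathrm{KL}$ term by $c$ up to constants. This is exactly the template used in the spectrally-normalized margin bound of \citet{PAC_Bayes_nn}, and I would follow it closely. First I would take $u$ to be a Gaussian perturbation of the weights, $u = \{U_i\}$ with $U_i$ having i.i.d.\ $\mathcal{N}(0,\sigma^2)$ entries, and $\prior = \mathcal{N}(0,\sigma^2 I)$ the matching zero-mean Gaussian prior; since $\prior$ does not depend on the data this is admissible. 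With this choice $\mathrm{KL}(w+u\,\|\,\prior) = \sum_{i=1}^L \|W_i\|_F^2/(2\sigma^2)$.

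**Controlling the perturbation.** The crux is choosing $\sigma$ so that the hypothesis of Lemma~\ref{lemma:marg}, namely $\mathbf{P}(\max_{\theta}\|h_{w+u}(\theta)-h_w(\theta)\|_2 \le \gamma/2) \ge 1/2$, holds. Here I would invoke the standard perturbation lemma for ReLU networks (as in \citet[Lemma~2]{PAC_Bayes_nn}): if each $\|U_i\|_2 \le \frac{1}{L}\prod_j\|W_j\|_2 \cdot \frac{1}{\|W_i\|_2}\cdot(\text{something})$, then $\|h_{w+u}(\theta)-h_w(\theta)\|_2 \le e\,\|\theta\|_2\,\big(\prod_j\|W_j\|_2\big)\sum_i \|U_i\|_2/\|W_i\|_2$. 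Combining this with $\|\theta\|_2 \le B$, a union bound over layers, and the Gaussian tail bound $\mathbf{P}(\|U_i\|_2 > t) \le 2\bar h\,e^{-t^2/(2\bar h \sigma^2)}$ (valid since $U_i \in \reals^{m_i\times n_i}$ with $n_i \le \bar h$), gives that $\sigma \asymp \frac{\gamma}{42\,B\,L\,\sqrt{\bar h \log(4L\bar h)}\,\prod_j\|W_j\|_2}$ suffices for the $1/2$-probability event. A subtlety I would be careful about: the bound must hold uniformly over $\theta \in \Theta$, but since the perturbation estimate only uses $\|\theta\|_2 \le B$ this uniformity is automatic. Substituting this $\sigma$ into $\mathrm{KL} = \sum_i\|W_i\|_F^2/(2\sigma^2)$ yields $\mathrm{KL} = \mathcal{O}\!\big(B^2 L^2 \bar h \log(L\bar h)\prod_j\|W_j\|_2^2 \sum_i \|W_i\|_F^2/\|W_i\|_2^2 \,/\,\gamma^2\big) = \mathcal{O}(c/\gamma^2)$.

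**Assembling the two cases.** Plugging this into Lemma~\ref{lemma:marg} gives, in the regime where the chosen $\sigma$ is meaningful, $R^t(w) \le \hat R^t_\gamma(w) + \mathcal{O}\!\big(\Cgtt\sqrt{(c/\gamma^2 + \log(LN/\delta))/N}\big)$, matching the first branch of~\eqref{eq:gen_pac} after pulling $1/\gamma^2$ out and absorbing the $L$ inside the log (which is where the $\log(LN/\delta)$ comes from: the union bound over $L$ layers contributes an $\mathcal{O}(\log L)$ that gets folded into the KL/log term). The second branch covers the degenerate case $\prod_j\|W_j\|_2 < \gamma/(2B)$: then the network output has norm at most $\|\theta\|_2\prod_j\|W_j\|_2 \le B\cdot\gamma/(2B) = \gamma/2$ for every $\theta$, so I can apply a trivial argument — either the perturbed prediction stays within the margin deterministically, or one simply notes that in this regime the margin bound is vacuous and one falls back on the basic McAllester/Hoeffding bound with a point-mass-limit prior, yielding the $\Cgtt\sqrt{\log(1/\delta)/(2N)}$ term.

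**Main obstacle.** The routine parts are the Gaussian KL computation and the final algebraic substitution. The genuine work is the ReLU perturbation bound — tracking how a weight perturbation propagates through $L$ layers of the specific architecture~\eqref{eq:hw} and getting the dependence on $\prod_j\|W_j\|_2$, $\bar h$, and $L$ right, then converting the operator-norm condition on $U_i$ into a statement that holds with probability $\ge 1/2$ via Gaussian concentration and a union bound. I expect this to be essentially a citation to \citet{PAC_Bayes_nn} with minor bookkeeping, since our architecture's first module is exactly their setting (the $k$ fixed-point iterations sit downstream of $h_w$ and are already absorbed into the loss $g^t_{\gamma,\theta}$, hence play no role in the perturbation analysis of the weights). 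The one thing to verify carefully is that the margin parameter in Lemma~\ref{lemma:marg} is $\gamma/2$ on the perturbation side but $\gamma$ on the empirical-risk side, so the $\Cgtt$ (evaluated at $\gamma/2$) and the $\hat R^t_\gamma(w)$ (evaluated at $\gamma$) appearing in the theorem statement are consistent with what Lemma~\ref{lemma:marg} delivers.
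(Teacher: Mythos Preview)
Your overall template is right, but there is a genuine gap: the prior $\prior=\mathcal{N}(0,\sigma^2 I)$ you propose depends on $\sigma$, and you then set $\sigma \asymp \gamma\big/\big(B L \sqrt{\bar h\log(4L\bar h)}\,\prod_j\|W_j\|_2\big)$. Since $\prod_j\|W_j\|_2$ is a function of the \emph{learned} weights $w$, this $\sigma$ --- and hence the prior --- depends on the training data, which violates the basic PAC-Bayes requirement that $\prior$ be data-independent. (You even assert ``since $\prior$ does not depend on the data this is admissible'' one line before making it data-dependent.) The paper's proof, following \citet{PAC_Bayes_nn}, fixes this by first using ReLU homogeneity to reduce to the case where every $\|W_i\|_2$ equals a common value $\zeta=(\prod_j\|W_j\|_2)^{1/L}$, then laying down a data-independent grid of candidate values $\tilde\zeta$, applying Lemma~\ref{lemma:marg} with a prior indexed by each $\tilde\zeta$, and union-bounding over the grid. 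The grid has size $L N^{1/(2L)}$, and \emph{this} union bound is where the $\log(LN/\delta)$ in~\eqref{eq:gen_pac} actually comes from --- not, as you wrote, from the union bound over the $L$ layers (that layer-wise union bound is already absorbed into the $\log(4L\bar h)$ factor inside $\sigma$ when establishing the $1/2$-probability event).

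A secondary consequence of skipping the normalization step is that your KL computation does not land on the stated $c$: with $\sigma\propto 1/\prod_j\|W_j\|_2$ you get $\mathrm{KL}\propto (\prod_j\|W_j\|_2)^2\sum_i\|W_i\|_F^2$, whereas the theorem has $\prod_j\|W_j\|_2^2\sum_i\|W_i\|_F^2/\|W_i\|_2^2$. The ratio $\|W_i\|_F^2/\|W_i\|_2^2$ only appears after the homogeneity rescaling makes all spectral norms equal to $\zeta$ and $\sigma$ is chosen proportional to $1/\zeta^{L-1}$ rather than $1/\zeta^{L}$. Finally, your treatment of the secondary case is on the right track but imprecise: the clean argument is that when $\prod_j\|W_j\|_2<\gamma/(2B)$ one has $\|h_w(\theta)\|_2\le\gamma/2$, so $R^t(w)\le R^t_{\gamma/2}(0)$ and $\hat R^t_{\gamma/2}(0)\le \hat R^t_\gamma(w)$; since the zero predictor is data-independent, Hoeffding applied to $R^t_{\gamma/2}(0)$ versus $\hat R^t_{\gamma/2}(0)$ gives the stated $\Cgtt\sqrt{\log(1/\delta)/(2N)}$ directly, without any PAC-Bayes machinery.
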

% \bnote{Not sure if products or summations refer immediately to the neighboring elements or to more of them. We may want to add parentheses to avoid confusions.}
% \bda{is it interesting to immediately discuss/remark what happens as $t\rightarrow\infty$?}
See Appendix~\ref{proof:gen_pac} for the proof.
With \Thm~\ref{thm:gen_pac}, we bound the risk in terms of the empirical marginal risk and a penalty term.
The main case is when the weights are sufficiently large: $\Pi_{j=1}^L \|W_j\|_2 \geq \gamma / (2 B)$.
In this case, we use the PAC-Bayesian framework to provide the generalization bound.
We directly use the perturbation bound from \citet[\Lem~2]{PAC_Bayes_nn} which bounds the change in the warm start $\hw(\theta)$ with respect to the change in the neural network weights $w$.
In the other case, if $\Pi_{j=1}^L \|W_j\|_2 \leq \gamma / (2 B)$, then the warm start $\hw(\theta)$ is close to the zero vector.
Here, we leverage Hoeffding's inequality to get the generalization bound.
% Here, we use Hoeffding's inequality to provide a generalization bound.
% Specifically, we have that $\|\hw(\theta)\|_2 \leq B (\Pi_{i=1}^L \|W_i\|_2) \leq \gamma / 2$.
% Although we expect that this will be an uncommon case after training, we can still bound the generalization gap using Hoeffding's inequality which we now state.
% If $X_1, \dots, X_n$ are independent random variables such that $a \leq X_i \leq b$ almost surely, then Hoeffding's inequality provides the following bound \citep{hoeffding, concentration_inequalities}:
% \begin{equation*}
%   \mathbf{E}\biggl[\exp(t \sum_{i=1}^n (X_i - \mathbf{E}[X_i]))\biggr] \leq \exp\biggl(\frac{n t^2 (b - a)}{8}\biggr).
% \end{equation*}

% We can make several remarks on our generalization bound from \Thm~\ref{thm:gen_pac}.
As $t \rightarrow \infty$, the generalization gap in \Thm~\ref{thm:gen_pac} approaches zero since $\Cgtt$ goes to zero.
Intuitively, this happens because the algorithm is run until convergence.
% We recover a $\frac{1}{\sqrt{N}}$ rate in terms of the number of training problems which is in line with many other rates in the PAC-Bayes framework~\citep{}.
On the other hand, as $N \rightarrow \infty$, the second term in each of the cases disappears and the generalization gap becomes the difference between the marginal empirical risk and the risk for a fixed $\gamma$.
Our bounds also generalizes the setting where the warm start is not learned.
% If we set all of the weights to zero, then we are effectively warm-starting each problem from the zero vector, a strategy also known as cold-starting.
Setting all of the weights to zero corresponds to warm-starting every problem from the zero vector.
% In this case, we get the bound $R^t(0) \leq \hat{R}^t_{\gamma}(0) + C(t) \sqrt{\frac{\log(1 / \delta)}{2N}}$.
% Since there is no advantage to picking a large $\gamma$ in this case, we can send it to zero to get the bound, $R^t(0) \leq \hat{R}^t(0) + C(t) \sqrt{\frac{\log(1 / \delta)}{2N}}$.
In this case, with high probability, $R^t(0) \leq \hat{R}^t(0) + \Cgtt \sqrt{\log(1 / \delta) / (2N)}$.

% propose an alternative way to provide a generalization bound that does not depend on the PAC-Bayes framework.

% \begin{theorem}\label{thm:gen_pac_corner}
%   Assume that $\Pi_{i=1}^L \|W_i\|_2 \leq \gamma / (2 B)$.
%   With probability at least $1 - \delta$,
%   \begin{equation}
%     R^t(w) \leq \hat{R}_{\gamma}^t(w) + C \sqrt{\frac{\log (1 / \delta)}{2N}}.
%   \end{equation}
% \end{theorem}

% \begin{equation}
%   R^k(w) \leq R_{\gamma/2}^k(0) \textcolor{red}{\leq \hat{R}_{\gamma/2}^k(0) + \text{penalty} \leq \hat{R}_{\gamma}^k(w) + \text{penalty}}
% \end{equation}
% \red{I believe we can bound the right hand side }

% \red{figure this out}

% \red{Make it abundantly clear how the perturbation bounds help}
% To remove the dependency on the marginal risk, $\hat{R}^k_{\gamma}(w)$, we use the recursive bounds found in \Sec~\ref{subsec:pertrub_bounds}.

\subsection{Bounding the empirical marginal risk}\label{subsec:perturb_bounds}
% \red{non-expansiveness}
% \bnote{There is no need to add all these commas. In phrases like this you can just say ``Theorem~\ref{thm:gen_pac} bounds the risk $R^t(w)$ in terms of the empirical marginal risk $\hat{R}^t_{\gamma}(w)$....''. Check out the Tsitsiklis and Boyd example math writings in our group repo.}
Theorem~\ref{thm:gen_pac} bounds the risk $R^t(w)$ in terms of the empirical marginal risk $\hat{R}^t_{\gamma}(w)$ plus a penalty term.
In this subsection, we use operator theory to bound two things: i) $\hat{R}^t_{\gamma}(w)$, thus removing the dependency on the marginal component, and ii) $\Cgtt$ in terms of $D$ given by \Eqn~\eqref{eq:D_bound}.
% and ensuring that the bound depends on the properties of the operator $\op$ only.
%In this way, we provide a computational way to bound the empirical marginal risk which depends on the properties of the operator, $\op$.
We first assume that the operator $\op$ is non-expansive, which is a common characteristic of solving convex problems~\citep{mon_primer}.
% Since we focus on convex optimization problems, all of the operators we consider, $\op$, are non-expansive.
\begin{definition}[Non-expansive operator]
  An operator $T$ is non-expansive if
  \begin{equation*}
    \|Tx - Ty\|_2 \leq \|x - y\|_2, \quad \forall x, y \in \dom T.
  \end{equation*}
\end{definition}
Since non-expansiveness is not enough to guarantee convergence, we break our analysis into three different cases of fixed-point operators which converge: contractive in \Sec~\ref{subsec:contractive}, linearly convergent in \Sec~\ref{subsec:lin_conv}, and averaged in \Sec~\ref{subsec:averaged}. %; see the definitions of each concept below.
By using the different properties of each, we can bound the marginal fixed-point residual after $t$ steps, $g_{\gamma, \theta}^t(z)$ defined in~\eqref{eq:marg_fp}.
Since the empirical marginal risk is the average of these marginal fixed-point residuals, we can remove the dependence on the empirical marginal risk in \Thm~\ref{thm:gen_pac}.
% This provides a bound on the empirical marginal risk given by~\eqref{eq:marg_risk}, the average of the marginal fixed-point residual after $t$ steps.
The sets of the three different types of operators are not mutually exclusive as seen in  the set relationships depicted in Figure~\ref{fig:operators}.
The contractive case provides the strongest bounds, followed by the linearly convergent case, and then the averaged case.
% Consequently, if an operator satisfies multiple properties, it is advisable to follow this order to get the strongest generalization bound.

\begin{figure}[!h]
  \vspace*{-4mm}
  \centering
    \includegraphics[width=0.5 \linewidth]{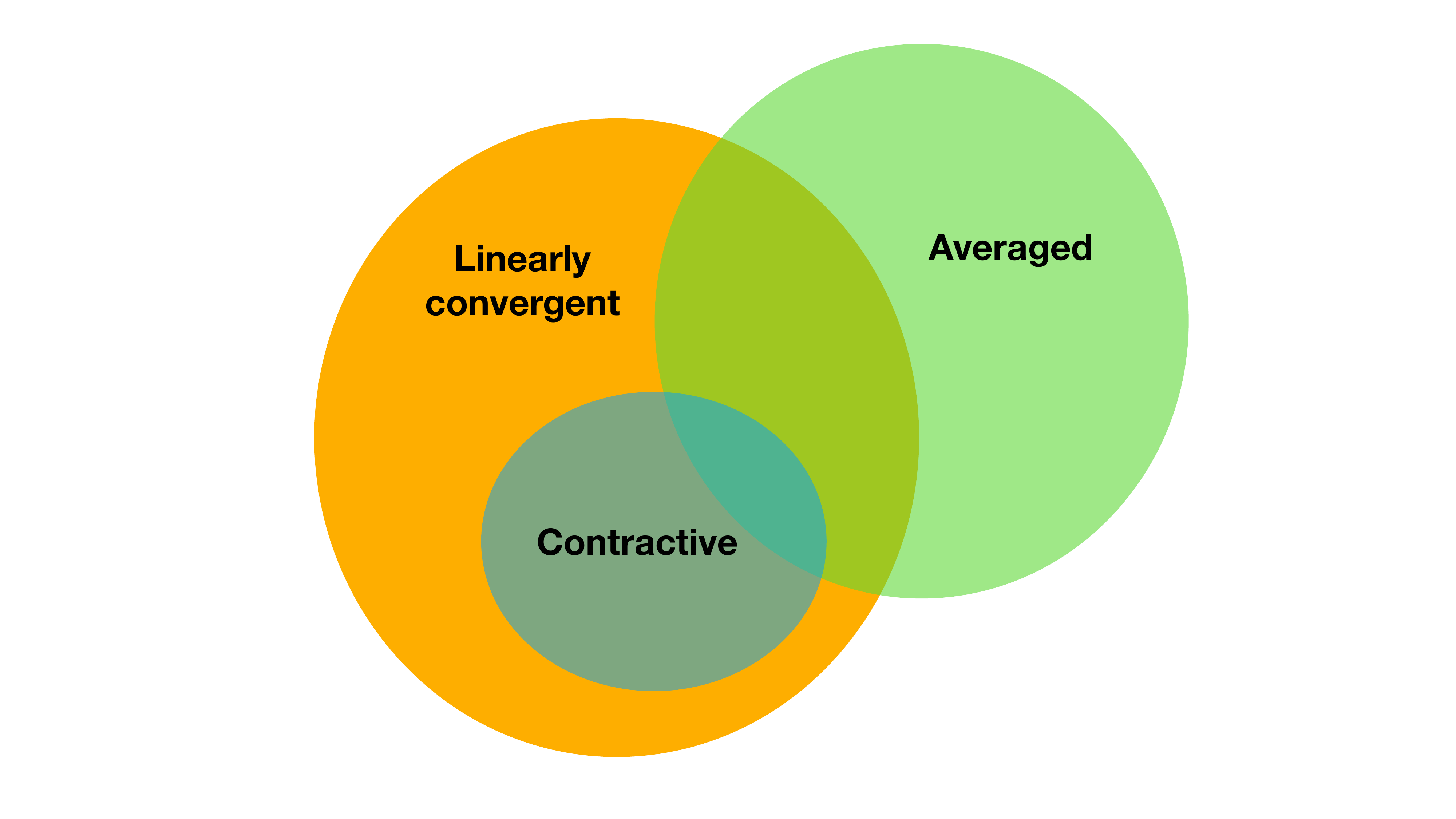}
    \caption{The set relationship between the different types of operators we consider in this section.}
    \label{fig:operators}
\end{figure}
% In this subsection, we write bounds on the fixed-point residual after $t$ steps if a perturbation is applied to the warm-start.
% In other words, we aim to bound $g_{\gamma,\theta}^t(z)$ in terms of the loss, $\Lot$, when $z$ is the warm-start.
% This allows us to find the empirical marginal risk $\hat{R}^t_{\gamma}(w)$.
% We separate our analysis into three cases based on the operator properties: contractive, linearly convergent, and averaged.
% In the contractive case, we can bound the marginal fixed-point residual after $t$ steps~$g_{\gamma,\theta}^t(z)$ directly in terms of the fixed-point residual after $t$ steps~$\Lot$.
% In the other two cases, we bound $g_{\gamma,\theta}^t(z)$ in terms of the trajectory of iterates: $z, \op(z),\dots, \op^t(z)$ rather than only $\op^t(z)$.
% To bound $g_{\gamma,\theta}^t(z)$ in the linearly convergent and averaged cases,  
To help in the subsequent analysis, we define the following functions which give the distance to optimality and marginal distance to optimality:
\begin{equation}\label{eq:r_f_def}
  r_{\theta}(z) = \dist_{\fix \op}(z), \quad f_{\gamma, \theta}^t(z) = \max_{\|\Delta\|_2 \leq \gamma} r_{\theta}(\op^t(z + \Delta)).
\end{equation}
% In these two non-contractive cases, we bound $g_{\gamma,\theta}^t(z)$ in terms of $r_{\theta}(z), r_{\theta}(\op(z)), \dots, r_{\theta}(\op^t(z))$.
We give the following lemma to relate the fixed-point residual to the distance to optimality.
\begin{lemma}\label{lem:fp_reg_bound}
For any non-expansive operator, $\op$,
    \begin{equation*}
        \ell^{\rm fp}_{\theta}(z) \leq 2r_{\theta}(z).
    \end{equation*}
\end{lemma}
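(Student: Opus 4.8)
The plan is to bound the fixed-point residual $\ell^{\rm fp}_\theta(z) = \|\op(z) - z\|_2$ by inserting a fixed point $z^\star \in \fix\op$ and applying the triangle inequality together with non-expansiveness. Concretely, I would write
\begin{equation*}
  \|\op(z) - z\|_2 \leq \|\op(z) - z^\star\|_2 + \|z^\star - z\|_2 = \|\op(z) - \op(z^\star)\|_2 + \|z - z^\star\|_2,
\end{equation*}
using that $\op(z^\star) = z^\star$ since $z^\star$ is a fixed point. Non-expansiveness of $\op$ gives $\|\op(z) - \op(z^\star)\|_2 \leq \|z - z^\star\|_2$, so the right-hand side is at most $2\|z - z^\star\|_2$.

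To finish, I would choose $z^\star$ to be the projection of $z$ onto $\fix\op$, i.e., a minimizer of $\|z - z^\star\|_2$ over $z^\star \in \fix\op$. This is well-defined because $\fix\op$ is assumed non-empty, and it is closed and convex for a non-expansive operator (a standard fact; e.g.\ it is the intersection of the closed convex sets on which the non-expansive map $z \mapsto \op(z)$ fixes each coordinate direction, or more directly from \citet{bauschke2011convex}). With this choice, $\|z - z^\star\|_2 = \dist_{\fix\op}(z) = r_\theta(z)$, which yields $\ell^{\rm fp}_\theta(z) \leq 2 r_\theta(z)$ as claimed. Note the inequality in fact holds with $z^\star$ \emph{any} fixed point, giving $\ell^{\rm fp}_\theta(z) \le 2\|z - z^\star\|_2$; taking the infimum over $z^\star \in \fix\op$ on the right then gives the stated bound, so one does not even strictly need closedness/convexity of $\fix\op$ as long as the infimum is what $\dist_{\fix\op}$ denotes.

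There is no real obstacle here — the argument is two lines. The only point requiring a word of care is ensuring $\dist_{\fix\op}(z)$ is meaningful, which is covered by the standing assumption that $\fix\op$ is non-empty (stated in the notation paragraph). I would present the proof in the "any fixed point, then take infimum" form to keep it maximally elementary and avoid invoking properties of the projection operator.
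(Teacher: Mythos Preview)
Your proposal is correct and follows essentially the same approach as the paper: pick the nearest fixed point $z^\star(\theta)$, apply the triangle inequality to $\|\op(z)-z\|_2$, and use non-expansiveness to bound $\|\op(z)-z^\star(\theta)\|_2\le\|z-z^\star(\theta)\|_2=r_\theta(z)$. Your additional remark that the bound holds for any fixed point (so one can take the infimum without invoking projection properties) is a nice touch but not a substantive departure.
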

See Appendix~\ref{sec:fp_2_reg_proof} for the proof.

% We also introduce the marginal distance to optimality after $t$ steps, defined as
% \begin{equation*}
%   f_{\gamma, \theta}^t(z) = \max_{\|\Delta\|_2 \leq \gamma} r_{\theta}(\op^t(z + \Delta)).
% \end{equation*}
% In the linearly convergent and averaged cases, we rely on the history of of the distances to optimality up to $t$, $r_{\theta}^0(z), \dots, r_{\theta}^t(z)$, in order to bound the marginal fixed-point residual.

% \paragraph{Contractive operators.}
\subsubsection{Contractive operators}\label{subsec:contractive}
We first consider contractive operators which give the strongest perturbation bounds.
\begin{definition}[$\beta$-contractive operator]\label{def:contractive}
  An operator $T$ is $\beta$-contractive for $\beta \in (0, 1)$ if
  \begin{equation*}
    \|Tx - Ty\|_2 \leq \beta \|x - y\|_2 \quad \forall x, y \in \dom T.
  \end{equation*}
\end{definition}
% An example of a contractive operator is the proximal gradient descent operators described in Table~\ref{table:fp_algorithms} under appropriate assumptions.
% \Sec~\ref{sec:gd} and \ref{sec:prox_gd} with the appropriate assumptions on the problem data mentioned therein.
% We use the contractive property of the operator to show that the perturbation between iterates started at $z$ and $z + \Delta$ shrinks geometrically.
If $\op$ is $\beta$-contractive, then
\begin{equation}\label{eq:perturbation_contractive}
    g_{\gamma, \theta}^t(z) \leq \Lot[z] + 2 \beta^t \gamma,
  \end{equation}
which follows from $\ell^{\rm fp}_{\theta}(\Tj[t][\cdot])$ being $2 \beta^t$-Lipschitz~\citep[Appendix A.1]{l2ws_l4dc}.
% In the contractive case, we can write the result in closed-form with the non-marginal risk.
% In the contractive case, we remove the dependence on the marginal risk with the following corollary.
In the contractive case, we remove the marginal risk dependency with the following corollary.
\begin{corollary}\label{cor:contractive}
    We define $B$ and $\bar{h}$ as in \Thm~\ref{thm:gen_pac}.
    Let $\op$ be $\beta$-contractive for any $\theta \in \Theta$. 
    Let $\hw$ be an $L$-layer neural network with ReLU activations such that~\eqref{eq:D_bound} holds with bound $D$. 
    Let $c = B^2 L^2 \bh \log(L \bh) \Pi_{j=1}^L \|W_j\|_2^2 \sum_{i=1}^L \|W_i\|_F^2 / \|W_i\|_2^2$.
    Then for any $\delta,\gamma > 0$ with probability $\geq 1 - \delta$ over a training set of size $N$,
  % \begin{equation}
  %   R^t(w) \leq \hat{R}^t(w) + 2 \beta^t \gamma + \mathcal{O} \Biggl(D \beta^{k}\sqrt{\frac{B^2 L^2 \bh \log(L \bh) \Pi_{i=1}^L \|W_i\|_2^2 \sum_{i=1}^L \frac{\|W_i\|_F^2}{\|W_i\|_2^2} + \log(\frac{L N}{\delta})}{\gamma^2 N}}\Biggr).
  % \end{equation}
  \begin{equation*}
    R^t(w) \leq \hat{R}^t(w) + 2 \beta^t \gamma + \begin{cases}
      \mathcal{O} \Biggl(\beta^t (D + \frac{\gamma} {2})  \sqrt{\frac{c  + \log(\frac{L N}{\delta})}{\gamma^2 N}}\Biggr) & \text{if  } \Pi_{j=1}^L \|W_j\|_2 \geq \frac{\gamma} {2 B}\\
      2 \beta^t (D + \frac{\gamma} {2}) \sqrt{\frac{\log(1 / \delta)}{2N}} & \text{else}
    \end{cases}
  \end{equation*}
\end{corollary}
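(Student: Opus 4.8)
The plan is to derive Corollary~\ref{cor:contractive} as a direct specialization of Theorem~\ref{thm:gen_pac}, using the contractivity of $\op$ to eliminate the marginal quantities. First I would invoke Theorem~\ref{thm:gen_pac} verbatim, which gives
\[
  R^t(w) \leq \hat{R}^t_{\gamma}(w) + \begin{cases}
    \mathcal{O}\Bigl(\Cgtt \sqrt{\tfrac{c + \log(LN/\delta)}{\gamma^2 N}}\Bigr) & \text{if } \Pi_{j=1}^L \|W_j\|_2 \geq \tfrac{\gamma}{2B}\\
    \Cgtt \sqrt{\tfrac{\log(1/\delta)}{2N}} & \text{else},
  \end{cases}
\]
with probability at least $1-\delta$. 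There are then exactly two substitutions to make: bound $\hat{R}^t_{\gamma}(w)$ in terms of the (non-marginal) empirical risk $\hat{R}^t(w)$, and bound the constant $\Cgtt$ in terms of $D$.

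For the first substitution, I would apply the perturbation bound~\eqref{eq:perturbation_contractive}, namely $g_{\gamma,\theta}^t(z) \leq \Lot[z] + 2\beta^t\gamma$, pointwise at $z = \hw(\theta_i)$ for each training sample, then average over $i = 1,\dots,N$. This yields $\hat{R}^t_{\gamma}(w) \leq \hat{R}^t(w) + 2\beta^t\gamma$, which accounts for the additive $2\beta^t\gamma$ term appearing outside the cases in the corollary's bound. For the second substitution, I need to bound $\Cgtt$, the uniform upper bound on $g_{\gamma/2,\theta}^t(\hw(\theta))$. Starting from~\eqref{eq:perturbation_contractive} with $\gamma$ replaced by $\gamma/2$, we get $g_{\gamma/2,\theta}^t(\hw(\theta)) \leq \ell^{\rm fp}_{\theta}(\op^t(\hw(\theta))) + 2\beta^t(\gamma/2)$. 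By Lemma~\ref{lem:fp_reg_bound}, $\ell^{\rm fp}_{\theta}(\op^t(\hw(\theta))) \leq 2 r_{\theta}(\op^t(\hw(\theta)))$; since $\op$ is $\beta$-contractive it is in particular non-expansive, so $r_{\theta}(\op^t(z)) \leq \beta^t r_{\theta}(z)$ (the distance to $\fix\op$ contracts by $\beta$ each step), and $r_{\theta}(\hw(\theta)) \leq D$ by~\eqref{eq:D_bound}. Combining, $g_{\gamma/2,\theta}^t(\hw(\theta)) \leq 2\beta^t D + \beta^t\gamma = 2\beta^t(D + \gamma/2)$, so we may take $\Cgtt = 2\beta^t(D + \gamma/2)$. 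Substituting this value into both branches of the Theorem~\ref{thm:gen_pac} bound produces the stated corollary, with the factor of $2$ absorbed into the $\mathcal{O}(\cdot)$ in the first branch and written explicitly in the second.

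The only mildly delicate point — and the step I would be most careful about — is justifying that $r_{\theta}(\op^t(z)) \leq \beta^t r_{\theta}(z)$, i.e. that contractivity of the operator transfers to geometric decay of the distance-to-fixed-point function. This follows because for any fixed point $z^\star \in \fix\op$ we have $\|\op^t(z) - z^\star\|_2 = \|\op^t(z) - \op^t(z^\star)\|_2 \leq \beta^t\|z - z^\star\|_2$; taking the infimum over $z^\star \in \fix\op$ on both sides (using that a contractive operator has a unique fixed point, or more simply that the bound holds for the minimizing $z^\star$) gives the claim. Everything else is bookkeeping: threading the two substitutions through the case split of Theorem~\ref{thm:gen_pac} and checking that the probability statement is inherited unchanged. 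I would note in passing that the hypothesis $g_{\gamma/2,\theta}^t(\hw(\theta)) \leq \Cgtt$ required by Theorem~\ref{thm:gen_pac} is exactly what the $\Cgtt = 2\beta^t(D+\gamma/2)$ bound establishes, so the corollary's assumptions (contractivity plus~\eqref{eq:D_bound}) are genuinely sufficient to invoke the theorem.
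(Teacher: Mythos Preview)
Your proposal is correct and follows essentially the same approach as the paper: apply Theorem~\ref{thm:gen_pac}, then use the contractive perturbation bound~\eqref{eq:perturbation_contractive} to replace $\hat{R}^t_{\gamma}(w)$ by $\hat{R}^t(w) + 2\beta^t\gamma$, and bound $\Cgtt \leq 2\beta^t(D+\gamma/2)$. The only cosmetic difference is in how the $\Cgtt$ bound is assembled: the paper first enlarges the initial distance via the triangle inequality $\dist_{\fix\op}(\hw(\theta)+\Delta) \leq D + \gamma/2$ and then applies the cited bound $\ell^{\rm fp}_\theta(\op^t(z)) \leq 2\beta^t\dist_{\fix\op}(z)$, whereas you first invoke~\eqref{eq:perturbation_contractive} at level $\gamma/2$ and then bound the unperturbed residual via Lemma~\ref{lem:fp_reg_bound} plus contractivity; both orderings yield the same constant and your version has the minor advantage of being self-contained rather than citing an external appendix.
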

\begin{proof}
    We remove the marginal dependence by applying inequality~\eqref{eq:perturbation_contractive} to get
    \begin{equation*}
        \hat{R}^t_{\gamma}(w) = \frac{1}{N} \sum_{i=1}^N   g_{\gamma, \theta}^t(\hw(\theta_i)) \leq 2 \beta^t \gamma + \frac{1}{N} \sum_{i=1}^N \ell^{\rm fp}_{\theta}(\op^t(\hw(\theta_i))) = \hat{R}^t(w) + 2 \beta^t \gamma.
    \end{equation*}
We bound the worst-case fixed-point residual as $\Cgtt \leq 2 \beta^t (D + \gamma / 2)$ which comes from $C_0(t) \leq 2 \beta^tD$~\citep[Appendix A.1]{l2ws_l4dc} and the inequality
% \begin{equation}\label{eq:gen_tri}
%     \dist_{\fix \op}(\hw(\theta) + \Delta) \leq \dist_{\fix \op}(\hw(\theta)) +  \|\Delta\|_2.
% \end{equation}
\begin{equation}\label{eq:gen_tri}
    \dist_{\fix \op}(\hw(\theta) + \Delta) \leq \|\Pi_{\fix \op}(\hw(\theta)) - (\hw(\theta) + \Delta)\|_2 \leq \dist_{\fix \op}(\hw(\theta)) +  \|\Delta\|_2.
\end{equation}
Here, $\Pi_{\fix \op}$ is the projection on the set $\fix \op$.
The first inequality in~\eqref{eq:gen_tri} uses the definition of the distance function $\dist_{\fix \op}$, and the second uses the triangle inequality.
\end{proof}
% \bnote{Please connect equation (14) to the definition of $\hat{R}^t(w)$}
% Here, we use the fact that $\ell_{\theta}(\Tj[t][\cdot])$ is $2 \beta^t$-Lipschitz and that the worst-case loss can be bounded as $C(t) = 2 \beta^t D$.
% In the contractive case, the empirical marginal risk can be bounded in terms of the empirical risk.

\subsubsection{Linearly convergent operators}\label{subsec:lin_conv}
Now, we consider a broader category of operators, linearly convergent operators.
% , which are defined as follows.
\begin{definition}[$\beta$-linearly convergent operator]\label{def:lin_conv}
  An operator $T$ is $\beta$-linearly convergent for $\beta \in [0, 1)$ if
  \begin{equation*}
    \dist_{\fix T}(Tx) \leq \beta \dist_{\fix T}(x) \quad \forall x \in \dom T.
  \end{equation*}
\end{definition}
% For example, while the extragradient method is not generally contractive for the saddle problem mentioned in~\Sec~\ref{sec:extragradient}, it converges linearly under certain conditions \citep[\Cor 3.3]{eg_conv}.
% For example, ADMM is known to converge linearly under certain conditions~\citep{eckstein_thesis}.
% \bnote{How can Eckstein thesis be from 2005? Please fix the citation. I would move this remark to the appendix where you have paragraphs about various algos.}
% ~\citep{banjac_scs_lin_conv}.
% \bnote{you should cite Eckstein's thesis paper where they mention linear convergence of ADMM since early 90s}
% This condition is weaker than the condition met by contractive operators and therefore our bounds in this subsection are weaker than the contractive case.
% In particular, if the operator, $\op$, is not contractive, then we get the much weaker property, $\|\Tj[t][z] - \Tj[t][w]\|_2 \leq \|z - w\|_2$, which follows from the non-expansiveness of $\op$.
% If the operator $\op$ is only non-expansive and linearly convergent but not contractive, then we get the much weaker property, $\|\Tj[t][z] - \Tj[t][w]\|_2 \leq \|z - w\|_2$.
If the operator, $\op$, is not contractive, then we get the weaker property that $\ell^{\rm fp}_{\theta}(\Tj[t][\cdot])$ is $2$-Lipschitz.
% It can be useful to provide a perturbation bound for a low value of $t$.
% Since this perturbation bound does not decrease with $t$, it loses power as $t$ increases.
% However, if $\op$ is linear regular, then we know that $r_{\theta}(z)$ decreases geometrically.
% Instead of using the Lipschitz property to derive perturbation bounds for all $t$ as was done in the contractive case, we can instead use the Lipschitz property for small values of $t$.
% Then it is advantageous to switch to using the linearly regular property to providing the perturbation bound.
% To provide tighter perturbation bounds, we combine the non-expansive property bound with the convergence property of the operator.
To provide tighter perturbation bounds, we first establish the following lemma.

% To deal with this we combine the non-expansive property bound with the convergence property of the operator to get tighter bounds.
\begin{lemma}\label{lemma:nonexp_r}
  For any non-expansive operator $\op$ and for any $t \geq 0$,
  \begin{equation*}
    |\rok[z] - \rok[w]| \leq 2\|z-w\|_2.
  \end{equation*}
\end{lemma}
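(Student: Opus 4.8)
The statement to prove is Lemma~\ref{lemma:nonexp_r}: for any non-expansive operator $\op$ and any $t \geq 0$, $|\rok[z] - \rok[w]| \leq 2\|z - w\|_2$, where $r_\theta(z) = \dist_{\fix\op}(z)$ and $\rok[z]$ denotes $r_\theta(\op^t(z))$.

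\textbf{Proof plan.} The plan is to decompose the claimed inequality into two elementary facts: (i) the distance function $r_\theta = \dist_{\fix\op}(\cdot)$ is $1$-Lipschitz, and (ii) the $t$-fold iterate $\op^t$ is non-expansive (hence $1$-Lipschitz). Chaining these gives a Lipschitz constant of $1$, not $2$ — so I need to be a little careful about where the factor of $2$ comes from. In fact the factor $2$ is generous: the composition $r_\theta \circ \op^t$ is already $1$-Lipschitz, and $1 \leq 2$, so the stated bound follows a fortiori. I suspect the authors state it with constant $2$ only for uniformity with Lemma~\ref{lem:fp_reg_bound} and inequality~\eqref{eq:gen_tri}, or to leave room in case $\fix\op$ needs to be replaced by a superset in the non-contractive setting; in any case the cleanest route is to prove the sharper $1$-Lipschitz statement and then weaken it.

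\textbf{Key steps, in order.} First I would record that for any closed convex set $S$, the map $x \mapsto \dist_S(x)$ is $1$-Lipschitz: for $a, b$, pick $s \in S$ with $\|s - b\|_2 = \dist_S(b)$; then $\dist_S(a) \leq \|a - s\|_2 \leq \|a - b\|_2 + \|b - s\|_2 = \|a-b\|_2 + \dist_S(b)$, and symmetrically, so $|\dist_S(a) - \dist_S(b)| \leq \|a - b\|_2$. Apply this with $S = \fix\op$ (non-empty and closed by the standing assumption; convexity of the fixed-point set of a non-expansive operator on a Hilbert space is standard, cf.~\citet{bauschke2011convex}, and in any case the distance function is $1$-Lipschitz for an arbitrary non-empty set). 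Second, I would note that $\op$ non-expansive implies $\op^t$ non-expansive by induction on $t$: $\|\op^t x - \op^t y\|_2 = \|\op(\op^{t-1}x) - \op(\op^{t-1}y)\|_2 \leq \|\op^{t-1}x - \op^{t-1}y\|_2 \leq \|x - y\|_2$, with the base case $t = 0$ being the identity. Third, combine: $|\rok[z] - \rok[w]| = |\dist_{\fix\op}(\op^t z) - \dist_{\fix\op}(\op^t w)| \leq \|\op^t z - \op^t w\|_2 \leq \|z - w\|_2 \leq 2\|z - w\|_2$.

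\textbf{Main obstacle.} There is essentially no technical obstacle here — every step is a one-line application of a standard fact. The only thing to watch is the logical status of $\fix\op$: the lemma implicitly assumes $\fix\op \neq \emptyset$ (so $r_\theta$ is finite-valued), which is guaranteed by the paper's standing assumption that $\op$ admits a fixed point for every $\theta \in \Theta$; I would mention this explicitly so the distance function is well defined. I would also remark that the factor $2$ is not tight and that the composition is in fact $1$-Lipschitz, since this sharper form may be reused elsewhere. Beyond that, the proof is immediate.
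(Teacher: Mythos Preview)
Your proposal is correct and in fact sharper than what the paper proves. The route differs slightly from the paper's own argument, so it is worth a brief comparison.

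The paper writes $r_\theta(\op^t x) = \|\op^t x - \Pi_{\fix\op}(\op^t x)\|_2$, applies the reverse triangle inequality to get
\[
|\rok[z]-\rok[w]| \leq \|\op^t z - \op^t w\|_2 + \|\Pi_{\fix\op}(\op^t z) - \Pi_{\fix\op}(\op^t w)\|_2,
\]
and then bounds the second term using non-expansiveness of the projection onto the convex set $\fix\op$, followed by non-expansiveness of $\op^t$. This produces the constant $2$ organically: one copy of $\|\op^t z - \op^t w\|_2$ from each summand.

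Your argument instead invokes directly that $\dist_S$ is $1$-Lipschitz for any non-empty set $S$, then composes with the $1$-Lipschitz map $\op^t$. This is both more elementary (it does not need convexity of $\fix\op$ or non-expansiveness of the projection) and yields the tighter constant $1$; the stated constant $2$ then follows trivially. Your observation that the paper's factor $2$ is slack is accurate. Either proof is perfectly acceptable; yours is shorter and loses less, while the paper's makes the role of the projection explicit, which ties in stylistically with how $\Pi_{\fix\op}$ is used elsewhere (\eg, inequality~\eqref{eq:gen_tri}).
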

See Appendix~\ref{proof:nonexp_r} for the proof.
We now use \Lem~\ref{lemma:nonexp_r} and the linear convergence guarantee, \Def~\ref{def:lin_conv}, to bound $\gok$ in terms of empirical quantities.
% \bnote{Is this proven anywhere? For every theorem or lemma please provide a proof, a citation, or a reference to the appendix, right after the statement.}
% Due to the nonexpansivity of $\op$, $r^k_{\theta}$ is $1$-Lipschitz, \ie,
% \begin{equation}\label{eq:r_non-expansive}
%   |r^k_{\theta}(z) - r^k_{\theta}(w)| \leq \|z-w\|_2.
% \end{equation}

\begin{lemma}\label{lem:perturbation_lin_conv}
  Assume that $\op$ is $\beta$-linearly convergent where $\beta \in (0, 1)$.
  Then the following bounds hold for all $t \geq 0$:
  % \begin{equation*}
  %   r_{\theta}(z + \Delta) \leq r_{\theta}(z) + 2\|\Delta\|_2,
  % \end{equation*}
  % \begin{equation*}\label{eq:perturbation_lin_conv_1}
  %   \rokp[z + \Delta] \leq \min \biggl(\rokp[z] + 2\|\Delta\|_2, \beta \rok[z + \Delta]\biggr) \quad \text{ for } t \geq 0,
  % \end{equation*}
  % \begin{equation*}
  %   \Lot[z + \Delta] \leq 2 \rok[z + \Delta] \quad \text{ for } t \geq 0,
  % \end{equation*}
  % \begin{equation*}
  %   f_{\gamma,\theta}^0(z) \leq r_{\theta}(z) + 2\gamma,
  % \end{equation*}
  \begin{equation*}\label{eq:perturbation_lin_conv_1}
    % f_{\gamma,\theta}^{t+1}(z) \leq \min \biggl(\rokp[z] + 2\gamma, \beta f_{\gamma,\theta}^t(z)\biggr) \quad \text{ for } t \geq 0,
    f_{\gamma,\theta}^{t}(z) \leq r_{\theta}(\op^{t}(z)) + 2\gamma, \quad f_{\gamma,\theta}^{t+1}(z) \leq \beta f_{\gamma,\theta}^t(z)
  \end{equation*}
  \begin{equation*}
    g_{\gamma,\theta}^t(z) \leq 2 f_{\gamma,\theta}^t(z).
  \end{equation*}
\end{lemma}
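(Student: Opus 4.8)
The plan is to prove the three inequalities in the order listed, each building on the previous one. The two functions to control are $r_\theta(z) = \dist_{\fix\op}(z)$ (distance to optimality) and $f_{\gamma,\theta}^t(z) = \max_{\|\Delta\|_2 \le \gamma} r_\theta(\op^t(z+\Delta))$ (marginal distance to optimality), defined in~\eqref{eq:r_f_def}. The final target $g_{\gamma,\theta}^t(z) \le 2 f_{\gamma,\theta}^t(z)$ then follows immediately by combining \Lem~\ref{lem:fp_reg_bound} (which gives $\ell^{\rm fp}_\theta(y) \le 2 r_\theta(y)$ for any non-expansive operator) with the definitions: for any $\Delta$ with $\|\Delta\|_2 \le \gamma$ we have $\Lot[z+\Delta] = \ell^{\rm fp}_\theta(\op^t(z+\Delta)) \le 2 r_\theta(\op^t(z+\Delta))$, and taking the max over $\Delta$ on both sides gives $g_{\gamma,\theta}^t(z) \le 2 f_{\gamma,\theta}^t(z)$.

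For the first bound, $f_{\gamma,\theta}^t(z) \le r_\theta(\op^t(z)) + 2\gamma$, I would fix any perturbation $\Delta$ with $\|\Delta\|_2 \le \gamma$ and apply \Lem~\ref{lemma:nonexp_r} with the pair $(z+\Delta, z)$: since $\op$ is $\beta$-linearly convergent it is in particular non-expansive, so $|r_\theta(\op^t(z+\Delta)) - r_\theta(\op^t(z))| \le 2\|(z+\Delta) - z\|_2 = 2\|\Delta\|_2 \le 2\gamma$. Rearranging gives $r_\theta(\op^t(z+\Delta)) \le r_\theta(\op^t(z)) + 2\gamma$, and taking the maximum over all admissible $\Delta$ yields the claim. (One should double-check whether the intended constant is $2\gamma$ or $\gamma$; \Lem~\ref{lemma:nonexp_r} as stated has Lipschitz constant $2$, so $2\gamma$ is what comes out, consistent with the statement.)

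For the recursion $f_{\gamma,\theta}^{t+1}(z) \le \beta f_{\gamma,\theta}^t(z)$, the idea is to push the linear-convergence estimate \Def~\ref{def:lin_conv} through the max. For any fixed $\Delta$ with $\|\Delta\|_2 \le \gamma$, write $\op^{t+1}(z+\Delta) = \op(\op^t(z+\Delta))$ and apply $\dist_{\fix\op}(\op x) \le \beta\,\dist_{\fix\op}(x)$ with $x = \op^t(z+\Delta)$, giving $r_\theta(\op^{t+1}(z+\Delta)) \le \beta\, r_\theta(\op^t(z+\Delta)) \le \beta f_{\gamma,\theta}^t(z)$, where the last step uses the definition of $f_{\gamma,\theta}^t$ as a maximum. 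Since this holds for every admissible $\Delta$, taking the max on the left gives $f_{\gamma,\theta}^{t+1}(z) \le \beta f_{\gamma,\theta}^t(z)$.

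The main obstacle is not any single step — each is a short argument — but rather making sure the quantifier juggling around the $\max_{\|\Delta\|_2 \le \gamma}$ is airtight: in each case one has to fix $\Delta$ first, derive a pointwise bound whose right-hand side is $\Delta$-free (either a constant or something already maximized over $\Delta$), and only then take the max on the left. A secondary point to verify carefully is that every operator property invoked is actually available: the first inequality needs only non-expansiveness (via \Lem~\ref{lemma:nonexp_r}), the recursion needs $\beta$-linear convergence (\Def~\ref{def:lin_conv}), and the last inequality needs non-expansiveness again (via \Lem~\ref{lem:fp_reg_bound}) — all of which hold under the hypothesis that $\op$ is $\beta$-linearly convergent with $\beta \in (0,1)$, since linear convergence together with the ambient standing assumption of non-expansiveness covers everything used.
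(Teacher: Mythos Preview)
Your proposal is correct and matches the paper's proof essentially step for step: the paper uses \Lem~\ref{lemma:nonexp_r} for the first inequality, \Def~\ref{def:lin_conv} for the recursion, and \Lem~\ref{lem:fp_reg_bound} for the last inequality, handling the $\max$ via an explicit maximizer $\Delta^\star$ rather than your equivalent ``fix any $\Delta$, then take the max'' phrasing. One small wording fix: $\beta$-linear convergence does not by itself imply non-expansiveness from \Def~\ref{def:lin_conv}; non-expansiveness is the standing assumption of \Sec~\ref{subsec:perturb_bounds}, exactly as you note in your final sentence, so just drop the earlier claim that it follows ``in particular'' from linear convergence.
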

% \bnote{Do we need to write $t\ge 0$ twice in the lemma?}

% \bnote{Can't we combine these results in one. Specifically the first two lines? Also, the proof is very confusing. There is no comment on the first and the last line.}

\begin{proof}
  % The inequality $\rokp[z + \Delta] \leq \beta \rok[z + \Delta]$ comes directly from $\op$ satisfying the linear regularity condition.
  % \red{finalize this properly}
  The inequality $f_{\gamma,\theta}^{t+1}(z) \leq \beta f_{\gamma,\theta}^t(z)$ comes from
  \begin{equation*}
      f_{\gamma,\theta}^{t+1}(z) = r_{\theta}(\op^{t+1}(z + \Delta^\star)) \leq \beta r_{\theta}(\op^{t}(z + \Delta^\star)) \leq \beta f_{\gamma,\theta}^t(z),
  \end{equation*}
  where $\|\Delta^\star\|_2 \leq \gamma$ is the maximizer to $f_{\gamma,\theta}^{t+1}(z)$.
  The first inequality comes from \Def~\ref{def:lin_conv} and the second from \eqref{eq:r_f_def}.
  % the linear convergence assumption since $\rokp[z] \leq \beta f_{\gamma,\theta}^{t}(z + \Delta)$ for every $\|\Delta\|_2 \leq \gamma$.
  The inequality $f_{\gamma,\theta}^{t+1}(z) \leq \rokp[z] + 2 \|\Delta\|_2$ in \Lem~\ref{lem:perturbation_lin_conv} follows from \Lem~\ref{lemma:nonexp_r}.
  The final inequality in \Lem~\ref{lem:perturbation_lin_conv} is derived as follows:
  \begin{equation*}
      g_{\gamma,\theta}^t(z) = \ell^{\rm fp}_{\theta}(\op^t(z + \Delta^\star)) \leq 2 r_{\theta} (\op^t(z + \Delta^\star)) \leq 2 f_{\gamma,\theta}^t(z).
  \end{equation*}
  Here, $\|\Delta^\star\|_2 \leq \gamma$ is the maximizer for $g_{\gamma,\theta}^t(z)$, and \Lem~\ref{lem:fp_reg_bound} gives the first inequality.
\end{proof}
% \bnote{This makes sense but can we simply state it as a consequence by writing the last line in Lemma 8 outside Lemma 8? It may be easier to read.}
Using \Lem~\ref{lem:perturbation_lin_conv}, we can bound the marginal empirical risk for the linearly convergent case.
For the $\beta$-linearly convergent case, $\Cgtt$ is bounded by $2 \beta^t (D + \gamma / 2)$ which uses~\eqref{eq:gen_tri} and $ C_0(t) \leq 2 r_{\theta}(\op^t(z)) \leq 2 \beta^t D$.
% the worst-case fixed-point residual can be bounded as $C(t) \leq 2 \beta^t D$ since $ C(t) \leq 2 r_{\theta}(\op^t(z)) \leq 2 \beta^t D$.
% \begin{equation*}
%     C(t) \leq 2 r_{\theta}(\op^t(z)) \leq 2 \beta^t D.
% \end{equation*}
The first inequality comes from \Lem~\ref{lem:fp_reg_bound} and the second inequality follows from \Def~\ref{def:lin_conv}.
% While $r_\theta(z)$ may be hard to obtain since it requires knowing the entire set of fixed-points, we can use the inequality $r_{\theta}(z) \leq \ell^{\rm reg}_{\theta}(z)$ which only requires one fixed-point.
% \bnote{Please comment on why the last line happens. From which definition?}
% Additionally, the worst-case loss, $C(t)$ can be bounded as
% \begin{equation}
%   C(t) \leq 2 \beta^t D.
% \end{equation}

\subsubsection{Averaged operators}\label{subsec:averaged}
% The last case we consider is the case when the fixed-point operator is averaged which in general gives sublinear convergence.
Lastly, we consider the averaged operator case which in general gives sublinear convergence.
% \bnote{Most of the coefficients should be in $(0, 1)$. I have updated some of them but please make sure this is consistent.}
\begin{definition}[$\alpha$-averaged operator]\label{def:averaged}
  An operator $T$ is $\alpha$-averaged for $\alpha \in (0, 1)$ if there exists a non-expansive operator $R$ such that $T = (1 - \alpha) I + \alpha R$.
\end{definition}
% For example, the fixed-point operator associated with ADMM from \Sec~\ref{sec:admm} is always averaged.
% An example of an averaged operator is the fixed-point operator associated with ADMM from Table~\ref{table:fp_algorithms}.
% The fixed-point iteration for the averaged case, also known as the Krasnosel’skiĭ–Mann iteration, in general gives sublinear convergence.
% We bound the marginal fixed-point residual with the following lemma.
% We combine this rate with \Lem~\ref{lemma:nonexp_r} to get bounds on the marginal fixed-point residual.
\begin{lemma}\label{lem:averaged}
  Let $\op$ be an $\alpha$-averaged.
  Then the following bound holds:
  % \begin{equation*}
  %   \rokp[z + \Delta] \leq \rokp[z] + 2\|\Delta\|_2 \quad \text{ for } t \geq 0,
  % \end{equation*}
  % \begin{equation*}
  %   \ell_{\theta}(\Tj[t+i][z + \Delta]) \leq \min_{j=1, \dots i} \frac{\alpha}{(1 - \alpha) \sqrt{j+1}} r_{\theta}(\Tj[t+j][z + \Delta]) \quad \text{ for } i, t \geq 0.
  % \end{equation*}
  % \begin{equation*}
  %   f_{\gamma,\theta}^{t}(z) \leq r_{\theta}(\Tj[t][z]) + 2 \gamma \quad \text{ for } t \geq 0,
  % \end{equation*}
  % \begin{equation*}
  %   g_{\gamma,\theta}^{t}(z) \leq \min_{j=1, \dots i} \sqrt{\frac{\alpha}{(1 - \alpha) (j+1)}} f_{\gamma,\theta}^{j}(z) \quad \text{ for } t \geq j \geq 0 .
  % \end{equation*}
  \begin{equation*}
    g_{\gamma,\theta}^{t}(z) \leq \min_{j=0, \dots, t} \sqrt{\frac{\alpha}{(1 - \alpha) (t-j+1)}} (r_{\theta}(\Tj[j][z]) + 2 \gamma) \quad \text{ for } t \geq 0 .
  \end{equation*}
  % \red{we could consolidate this into one equation instead of two}
  % \begin{equation*}
  %   |\Lok[z + \Delta] - \Lok[z]| \leq
  %   2 \min \Biggl(2\|\Delta\|_2, \frac{\alpha}{(1 - \alpha) \sqrt{k+1}} D\Biggr) \text{ for } k \geq 0.
  % \end{equation*}
\end{lemma}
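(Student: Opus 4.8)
The plan is to derive the bound from the classical sublinear fixed-point residual estimate for averaged operators, applied after a ``restart'' at iteration $j$, and then to charge the perturbation $\Delta$ to Lemma~\ref{lemma:nonexp_r}. The one external ingredient is the standard Krasnoselskii--Mann fact that, for an $\alpha$-averaged operator $\op$ (which admits a fixed point under the standing assumption that~\eqref{prob:fp} is solvable) and for every $x$ and every $m \geq 0$,
\[
  \|\op^{m+1}(x) - \op^{m}(x)\|_2 \;\leq\; \sqrt{\frac{\alpha}{(1-\alpha)(m+1)}}\; r_{\theta}(x).
\]
I would either cite this or, mirroring how the paper treats the contractive and linearly convergent cases, include the short Fej\'er-monotonicity derivation in the appendix.

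First I fix $\theta$, a perturbation $\Delta$ with $\|\Delta\|_2 \leq \gamma$, and an index $j \in \{0, 1, \dots, t\}$, and rewrite the step-$t$ fixed-point residual as a residual measured $t-j$ steps past the base point $\op^j(z+\Delta)$:
\[
  \ell_{\theta}^{\rm{fp}}\bigl(\op^t(z+\Delta)\bigr) \;=\; \bigl\|\op^{t-j+1}\bigl(\op^j(z+\Delta)\bigr) - \op^{t-j}\bigl(\op^j(z+\Delta)\bigr)\bigr\|_2 .
\]
Applying the averaged-operator estimate with $m = t-j$ and base point $\op^j(z+\Delta)$ gives
\[
  \ell_{\theta}^{\rm{fp}}\bigl(\op^t(z+\Delta)\bigr) \;\leq\; \sqrt{\frac{\alpha}{(1-\alpha)(t-j+1)}}\; r_{\theta}\bigl(\op^j(z+\Delta)\bigr) .
\]
Since an $\alpha$-averaged operator is non-expansive (Definition~\ref{def:averaged}), Lemma~\ref{lemma:nonexp_r} applied at iteration $j$ yields $r_{\theta}(\op^j(z+\Delta)) \leq r_{\theta}(\op^j(z)) + 2\|\Delta\|_2 \leq r_{\theta}(\op^j(z)) + 2\gamma$. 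Substituting, the right-hand side no longer depends on $\Delta$, so taking the maximum over $\|\Delta\|_2 \leq \gamma$ in the definition~\eqref{eq:marg_fp} of $g^t_{\gamma,\theta}(z)$ and then the minimum over $j \in \{0,\dots,t\}$ produces exactly the stated inequality.

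The only genuinely delicate point is pinning down the averaged-operator residual bound with the correct constant and index shift $t-j+1$ (so that $j=t$ reduces to the one-step estimate $\sqrt{\alpha/(1-\alpha)}\,(r_{\theta}(\op^t(z))+2\gamma)$ and $j=0$ to $\sqrt{\alpha/((1-\alpha)(t+1))}\,(r_{\theta}(z)+2\gamma)$); after that the argument is a two-line combination. I would also remark that one could crudely replace $r_{\theta}(\op^j(z))$ by $r_{\theta}(z)$ via non-expansiveness of $\op$, but retaining $r_{\theta}(\op^j(z))$ is what makes the $\min_j$ informative, since that distance typically shrinks much faster than $\sqrt{1/(t-j+1)}$ grows. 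Finally, note that, unlike the contractive and linearly convergent cases, this proof does not route through Lemma~\ref{lem:fp_reg_bound}: the averaged residual bound is stated directly for $\ell_{\theta}^{\rm{fp}}$, which is why no factor of $2$ multiplies $r_{\theta}(\op^j(z))$ in the conclusion.
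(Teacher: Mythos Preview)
Your proof is correct and follows essentially the same route as the paper: the paper also invokes the averaged-operator residual bound (cited there as \citet[\Thm~1]{lscomo}) to get $\ell^{\rm fp}_{\theta}(\op^t(z+\Delta^\star)) \leq \bar{\alpha}_{t,j}\, r_{\theta}(\op^j(z+\Delta^\star))$, and then charges the perturbation via $f_{\gamma,\theta}^j(z) \leq r_{\theta}(\op^j(z)) + 2\gamma$ from Lemma~\ref{lem:perturbation_lin_conv}, which is itself just Lemma~\ref{lemma:nonexp_r}. The only cosmetic difference is that the paper passes through the intermediate quantity $f_{\gamma,\theta}^j(z)$ whereas you apply Lemma~\ref{lemma:nonexp_r} directly.
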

\begin{proof}
Let $\bar{\alpha}_{t,j} = \sqrt{\frac{\alpha}{(1 - \alpha) (t-j+1)}}$.
There exists $\|\Delta^\star\|_2 \leq \gamma$ such that the equality below holds by definition of the marginal fixed-point residual.
    \begin{equation}\label{eq:avged_proof_eq}
        g_{\gamma,\theta}^t(z) = \ell^{\rm fp}_{\theta}(\op^t(z + \Delta^\star)) \leq \bar{\alpha}_{t,j} (r_{\theta}(\Tj[j][z + \Delta^\star])) \leq \bar{\alpha}_{t,j} f_{\gamma,\theta}^j(z) \leq \bar{\alpha}_{t,j} (r_{\theta}(\Tj[j][z]) + 2 \gamma)
    \end{equation}
The three inequalities comes from~\citet[\Thm~1]{lscomo}, the definition of $f_{\gamma,\theta}^j(z)$ in~\eqref{eq:r_f_def}, and \Lem~\ref{lem:perturbation_lin_conv} respectively.
\Eqn~\eqref{eq:avged_proof_eq} holds for all $0 \leq j \leq t$.
	% The first equation comes directly from \Lem~\ref{lemma:nonexp_r}.
 %  The second equation comes directly from \citet[\Thm~1]{lscomo}.
% \bnote{Not clear how this comes directly. Let's be more precise about these steps.}
\end{proof}
Using \Lem~\ref{lem:averaged}, we can bound the marginal empirical risk for the averaged case.
We bound the worst-case marginal fixed-point residual with $\Cgtt \leq \sqrt{\alpha / ((1 - \alpha)(t+1))}(D + \gamma)$ which follows from \Lem~\ref{lem:averaged} by letting $z = \hw(\theta)$ and $j=0$.
Then the inequality holds for every $\theta \in \Theta$.
% Note that our bounds on the empirical marginal risk in the linearly convergent and averaged cases rely on $r_\theta$ which may in general be difficult to compute.
% However, one can use the inequality $r_\theta(z) \leq \ell^{\rm reg}(z)$.
% using \citet[\Thm~1]{lscomo} and inequality~\eqref{eq:gen_tri}.
% $C(t) \leq \alpha D / ((1- \alpha) \sqrt{t})$.

% \bnote{These steps are not clear. I would spend a few more words to clarify them.}
% \begin{equation}
%   C(t) \leq \frac{\alpha D}{(1- \alpha) \sqrt{t}}.
% \end{equation}
% We use these recursive bounds to translate the marginal empirical risk to the empirical risk.

\section{Choosing the right architecture}\label{sec:pick_k}
% Picking the number of fixed-point steps $k$ in training our architecture is an important choice.
% In this section, we show that there is a tradeoff in picking $k$.
% In \Sec~\ref{sec:gen_test_degrades}, we show that the generalization from training data to unseen data degrades as $k$ increases.
% \red{
%   \begin{itemize}
%     \item how $k$ effects $t$
%     \item different loss
%   \end{itemize}
% }
In this section, we discuss how the number of fixed-point steps the model is trained on, $k$, and the loss function affect performance.

\subsection{Bounds on the fixed-point residual for $t$ evaluation steps}\label{sec:gen_iters}
% While the warm-starts produced by our framework are specifically designed to minimize the loss after $k$ fixed-point steps, we may be interested in the performance of the algorithm after $k$ steps.
% \bnote{This phrase is very convoluted. Please rephrase and shorten.}
% In this subsection, we derive bounds on the fixed-point residual for a larger number of iterations $t$, given a warm-start that is learned with the loss, either the fixed-point residual loss or the regression loss, after $k$ steps.
In this subsection, we derive bounds on the fixed-point residual after $t$ steps, $\Lot$, in terms of the loss after $k$ steps, $\ell_{\theta}(\op^k(z))$ where $k < t$.
% , either the fixed-point residual loss or the regression loss, after $k < t$ steps.
A summary of these results is given in Table~\ref{tab:gains_summary} where we provide the bound for each of the two loss functions.
The bounds in Table~\ref{tab:gains_summary} using the fixed-point residual loss in the denominator are given by either applying the definition of contractiveness in the contractive case or non-expansiveness in the other two cases.
To get the bounds in Table~\ref{tab:gains_summary} when using the regression loss in the denominator, we first establish the inequality
\begin{equation}\label{eq:fp_reg_bound}
  \ell_{\theta}^{\rm{fp}}(z) \leq 2 \ell_{\theta}^{\rm{reg}}(z),
\end{equation}
for any non-expansive operator $\op$.
This result follows from \Lem~\ref{lem:fp_reg_bound} since $r_{\theta}(z) \leq \ell^{\rm reg}_{\theta}(z)$.
% See \Sec~\ref{sec:fp_2_reg_proof} for the proof.
The results in the contractive and linearly convergent cases follow from applying the definition of each and inequality~\eqref{eq:fp_reg_bound}.
In the averaged case, we directly apply \citet[\Thm~1]{lscomo}.
Unless the operator is contractive, the results from Table~\ref{tab:gains_summary} indicate that stronger bounds can be obtained from using the regression loss.
% In general, we get better bounds for larger $k$.
% These bounds offer insight into the performance of the warm-start for fixed-point algorithms beyond the initial $k$ iterations and provide a measure of the degree to which the warm-start can be expected to generalize to future steps.
% For the contractive case, we can provide good bounds on the fixed-point residual after $t$ steps regardless of the choice of loss function.

% In the linearly convergent and averaged cases, we bounds we provide are much stronger with the regression loss compared with the fixed-point residual loss.
% For contractive and linearly regular operators, we can provide useful bounds on the fixed-point residual after $t$ steps given the fixed-point residual after $k$ steps for $k$ larger than $t$.
% However, for averaged operators, we cannot write useful bounds, indicating that we expect the operators to yield worse results compared with the other cases.
% The intuition for this phenomenon is that for contractive or linearly regular operators, if our method gives a lead over the cold-start, then this lead is maintained by the property of the operator.
% Since we assume that the operator $\op$ is non-expansive, we can upper bound the fixed-point residual with the regression loss function with

% Using this result as well as standard results from operator theory, we get the results in Table~\ref{tab:gains_summary}.

% \bnote{summary paragraph for guidelines + table}

\begin{table}[!h]
    \centering
    \small
    \caption{Bounds for the ratios of testing at $t$ steps and training at $k$ steps.
    Here, we bound the ratio of the fixed-point residual after $t$ steps and the loss after $k$ steps where $t > k$.
    The value in the table provides the bound, \eg, for a $\beta$-contractive operator, $\Lot / \ell^{\rm{reg}}_{\theta}(\Tj) \leq 2 \beta^{t-k}$.}
    \begin{tabular}{lcc}
    \toprule
        % {\bf Operator} & \multicolumn{2}{c}{Gains} \\
        % & $\displaystyle \frac{\Lot}{\Lok}$ & $\displaystyle \frac{\Lot}{\ell^{\rm{reg}}_{\theta}(\Tj)}$  \\
        {\bf Operator}
        & $\displaystyle \frac{\Lot}{\Lok}$ & $\displaystyle \frac{\Lot}{\ell^{\rm{reg}}_{\theta}(\Tj)}$  \\
        % \begin{tabular}{ll}
        %   Fp res.&
        %   \begin{tabular}{@{}c@{}}Cold- \\ Start\end{tabular}
        %   &
        % \end{tabular}
        \midrule
        $\beta$-contractive & $\beta^{t - k}$& $2 \beta^{t-k}$\\
        $\beta$-linearly convergent & $1$ & $2 \beta^{t-k}$\\
        $\alpha$-averaged & $1$& $\sqrt{\frac{\alpha}{(1 - \alpha) (t - k + 1)}}$\\
        \bottomrule
    \end{tabular}
    \label{tab:gains_summary}
\end{table}

\subsection{Training for the fixed-point residual vs regression loss}\label{sec:fp_vs_reg}
% \bnote{Put paragraph about advantages/disadvantages in \Sec~6.}
The fixed-point residual~\eqref{eq:fp_loss} and regression~\eqref{eq:reg_loss} losses, align with the main distinction of learning methods mentioned in \citet[\Sec~2.2]{amos_tutorial} which splits between learning strategies that penalize suboptimality directly and those that penalize the distance to known ground truth solutions.
The primary advantages of using our fixed-point residual loss are twofold: i) there is no need to compute a ground truth solution $z^\star(\theta)$ for each problem instance before training, and ii) the loss exactly corresponds to the evaluation metric, the fixed-point residual.
On the other hand, there are two main advantages to using the regression loss: i) the regression loss uses the global information of the ground truth solution $z^\star(\theta)$, while the fixed-point residual loss exploits only local information, and ii) as mentioned in \Sec~\ref{sec:gen_iters}, stronger bounds on future iterations can be obtained when using the regression loss.
% The main advantages to using our fixed-point residual loss are that i)
% there is no need to compute a ground truth solution, $z^\star(\theta)$, for each problem instance before training; and ii)  the loss exactly matches the evaluation metric, the fixed-point residual.
% On the other hand, the two main advantages to using the regression loss are that i) the regression loss uses the global information of the ground truth solution $z^\star(\theta)$, while the fixed-point residual loss exploits only local information; and ii) as mentioned in \Sec~\ref{sec:gen_iters}, stronger bounds on future iterations can be obtained when using the regression loss.

% two points that have the same regression loss could still have very different fixed-point residuals.
% On the other hand, there are a few advantages to using the regression loss function.
% First, the regression  task uses the global information of the ground truth solution $z^\star(\theta)$, while the fixed-point residual loss exploits only local information.
% Second, as mentioned in \Sec~\ref{sec:gen_iters}, using the fixed-point residual loss can result in overfitting to performing well at exactly $k$ steps, but in some cases can fail to perform well after $t$ steps where $t > k$.

\section{Numerical experiments}\label{sec:numerical_experiments}
% We now illustrate our method with examples of second-order cone programs and semidefinite programs involving control systems and statistics where it can be important to solve problems quickly.
% The experiments we use are robust Kalman filtering, robust non-negative least squares, phase retrieval, sparse Principal Component Analysis (PCA), and a model predictive control example with a physical system modeled on oscillating masses.
% We provide data on the dimensions of the problems and parameter sizes in table~\eqref{table:prob_sizes} as well as data on the dimensions on the cones the problems in table~\eqref{table:cone_sizes}.
We now illustrate our method on examples of fixed-point algorithms from Table~\ref{table:fp_algorithms}.
% We cover gradient descent in~\Sec~\ref{sec:gd_experiments}, proximal gradient descent in~\Sec~\ref{sec:proxgd_experiments}, OSQP in ~\Sec~\ref{sec:osqp_experiments}, and SCS in ~\Sec~\ref{sec:scs_experiments}.
We implemented our architecture in the JAX library~\citep{jax} using the Adam~\citep{adam} optimizer to train.
We use $10000$ training problems and evaluate on $1000$ test problems for the examples except the first one in~\Sec~\ref{sec:gd_experiments}.
% In our examples we use a neural network with two hidden ReLU layers of size $500$ each.
In our examples, we conduct a hyperparameter sweep over learning rates of either $10^{-3}$ or $10^{-4}$, and architectures with $0,1,$ or $2$ layers with $500$ neurons each.
We decay the learning rate by a factor of $5$ when the training loss fails to decrease over a window of $10$ epochs.
% We also consider architectures with either $0,1,$ or $2$ layers with $500$ neurons each.
% \bnote{Do we pick only two learning rates options? Didn't we have a decaying learning rate?}
All computations were run on the Princeton HPC Della Cluster and each example could be trained under $5$ hours.
The code to reproduce our results is available at
\begin{equation*}
\text{\url{https://github.com/stellatogrp/l2ws_fixed_point}}.
\end{equation*}
% \paragraph{Metrics.}
% To check the performance of different initializations, we compare several metrics on the unseen problems across the number of evaluation steps.
% \begin{itemize}
%   \item For all setups, we report the fixed-point residual.
%   \item For gradient descent and ISTA, we also compare the difference to optimality, $f_{\theta}(x^k) - f_{\theta}(x^\star)$.
%   \item For OSQP and SCS, we compare the primal and dual residuals.
%   \red{refer to future section}
%   \item For OSQP and SCS, we also warm-start the C code with different initializations and compare the solve times.
% \end{itemize}

\paragraph{Baselines.}
We compare our learned warm start, for both the fixed-point residual loss and the regression loss functions, against the following initialization approaches:
\begin{description}
	\item[Cold start.] We initialize the fixed-point algorithm for a test problem with parameter $\theta$ with the prediction $h_{w_{\rm cs}}(\theta)$ where $w_{\rm cs}$ has been randomly initialized.
	\item[Nearest-neighbor warm start.] The nearest-neighbor warm start initializes the test problem with an optimal solution of the nearest of the training problems measured by distance in terms of its parameter $\theta \in \reals^d$.
    In most of our examples, the parametrized problems are sufficiently far apart so that the nearest-neighbor initializations do not significantly improve upon the cold start.
    % examples in this section are challenging enough for the nearest neighbor to not significantly improve upon the cold start.
    % The examples in this section are challenging enoug
    % When the dimensions of a space become large enough, sampled points from that space all become far apart reducing the effectiveness of the nearest-neighbor warm start~\citep{nn_curse_dim}.\\
    % ff\\
		% \bnote{The following remarks are more about the descussion, not about the description of the competing approaches. I would move them to later.}
  % In some cases, the nearest neighbor initialization hardly improves upon the cold-start initialization, especially in examples with sufficiently large problem size parameter space dimensions.
  % When the dimensions of a space become large enough, sampled points from that space all become far apart reducing the effectiveness of the nearest neighbor warm-start~\citep{nn_curse_dim}.
  % Additionally, the nearest neighbor approach does not consider the downstream algorithm.
  % On top of the varying quality of the warm-start obtained through this method, there is a non-trivial computational cost of $\mathcal{O}(d \log(N))$ to compute the nearest neighbor over the $N$ training problems~\citep{kdtree}.
\end{description}
In every experiment, we plot the average of the fixed-point residuals of the test problems for varying $t$ as defined in \Sec~\ref{sec:loss_functions}.
Additionally, we plot the average \emph{gain} of each initialization relative to the cold start across the test problems.
This gain for a given parameter $\theta$ corresponds to the ratio
\begin{equation*}
  \frac{\Lot[h_{w_{\rm cs}}(\theta)]}{\Lot[\hw(\theta)]},
\end{equation*}
where $\hw$ is the initialization technique in question and $h_{w_{cs}}$ is the cold-start predictor described above.
Importantly, we code exact replicas of the OSQP and SCS algorithms in JAX.
This allows us to input the learned warm starts into the corresponding C implementations; moreover, we report the solve times in milliseconds to reach various tolerances for the experiments we run with OSQP in \Sec~\ref{sec:osqp_experiments} and SCS in \Sec~\ref{sec:scs_experiments}.

\subsection{Gradient descent}\label{sec:gd_experiments}
% In this subsection, we apply our learning warm-starts framework to the simplest algorithm we consider, gradient descent from Table~\ref{table:fp_algorithms}.
\subsubsection{Unconstrained QP}
We first consider a stylized example to illustrate why unrolling fixed-point steps can significantly improve over a decoupled approach, where $k=0$.
% The goal is to provide more intuition as to why unrolling algorithm steps during training is useful.
Consider the problem
\begin{equation*}
  \begin{array}{ll}
  \label{prob:gd_example}
  \mbox{minimize} & (1 / 2)z^T P z + c^T z,
  \end{array}
\end{equation*}
where $P \in \symm^n_{++}$, and $c \in \reals^n$ are the problem data and $z \in \reals^n$ is the decision variable.
The parameter is $\theta = c$.

\paragraph{Numerical example.}
We consider a small example where $n=20$.
We have a single hidden layer with $10$ neurons, and $100$ training problems.
Let $P \in \symm_{++}^{n}$ be a diagonal matrix where the first $10$ diagonals take the value $100$ and the last ten take the value of $1$.
Let $\theta = c \in \reals^n$.
% Here, each entry of $\theta$ is sampled i.i.d. accoridng to the uniform distribution, $\sim \psi_i \mathcal{U}[-10,10]$.
Here, each $\theta_i$ is sampled according to the uniform distribution $\psi_i \mathcal{U}[-10,10]$, where $\psi_i = 10000 \text{ if } i \leq 10 \text{ else } 1$.
% We take a step size of $$.
% We take the largest step size that guarantees convergence~\citep{mon_primer}.
The idea is that the first $10$ indices of the optimal solution $z^\star(\theta)$ vary much more than the last $10$, but the first $10$ indices of $z$ will converge much faster.
% What a decoupled approach won't be able to understand is that the last $10$ iterates will converge slower than the first $10$.
% A decoupled approach will prioritize minimizing the error to predict the first $10$ indices.
% By unrolling these gradient steps, our learned warm start approch with $k > 0$ is able to adapt the warm start to take advantage of the downstream algorithm.

\paragraph{Results.}
Figure~\ref{fig:unconstrained_qp_results} and Table~\ref{tab:unconstrained_qp_table} show the convergence behavior of our method.
The decoupled approaches prioritize minimizing the error to predict the first $10$ indices and fail to improve on the cold start.
By unrolling these gradient steps, our learning framework with $k > 0$ is able to adapt the warm start to take advantage of the downstream algorithm.
% The learned warm starts with $k=0$ fail to improve on the cold start, but for positive $k$, the learned warm starts give large gains.
These gains remain constant as the number of evaluation steps increases.

\begin{figure}[!h]
  \centering
    \includegraphics[width=\figsize\linewidth]{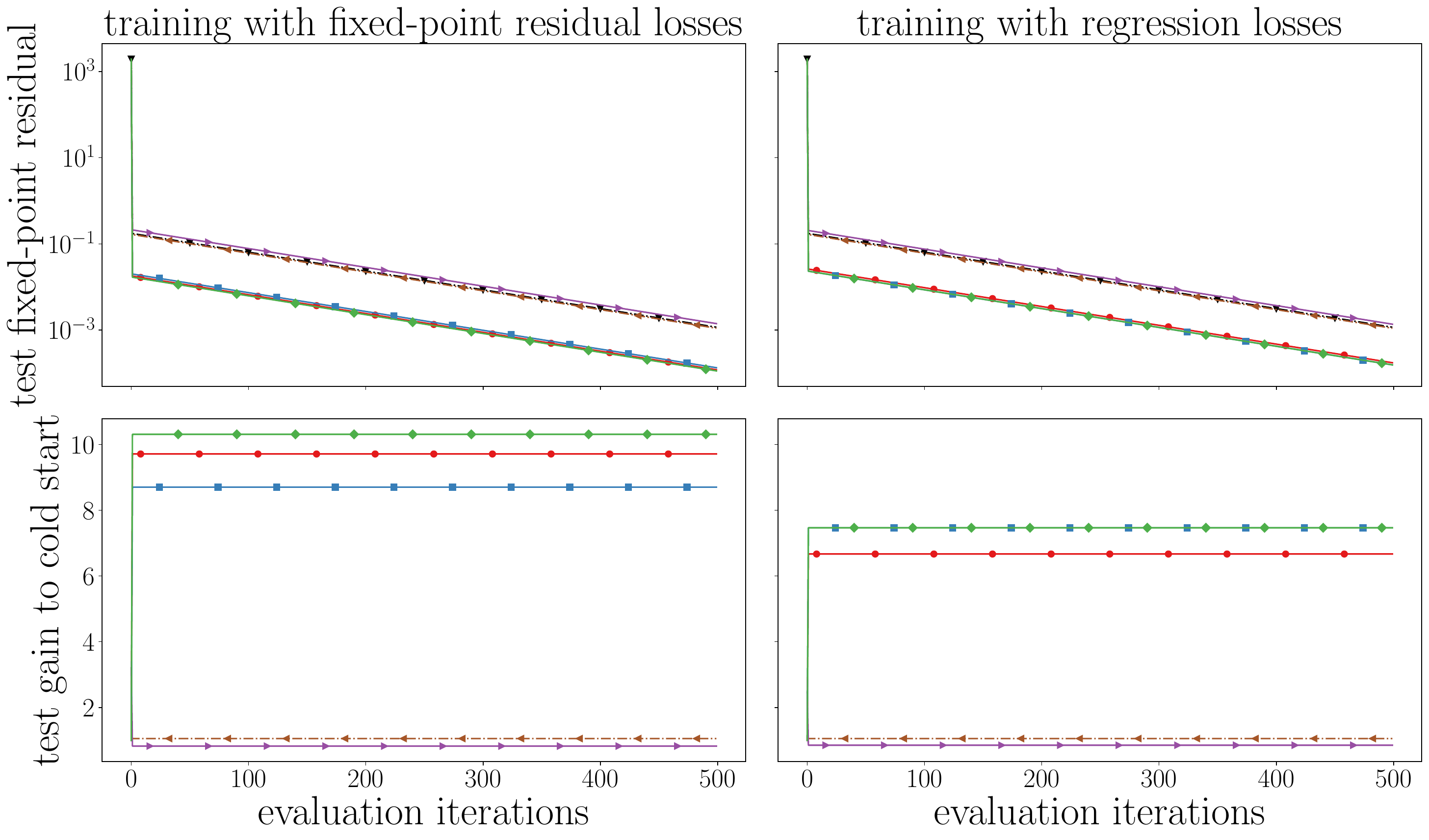}
    \\
    \legend\\
  % \linedowntri{0}{0}{0} cold start \hspace{1mm}
  % \linelefttri{166}{86}{40} nearest neighbor warm start \hspace{1mm}
  % \\learned warm-start $k=$\{\hspace{-1mm}
  %     \linerighttri{152}{78}{163} $0$
  %     \linecircle{228}{26}{28} $5$
  %     \linesquare{55}{126}{184} $15$
  %     \linediamond{77}{175}{74} $60$
  %     \}\hspace*{-2mm}\\
    \caption{Unconstrained QP results.
      All of the learned warm starts provide large improvements over the cold start and nearest neighbor initializations except for the ones learned with $k=0$.
      % Due to the linear convergence, the gains are maintained for future iterations.
    }
    \label{fig:unconstrained_qp_results}
\end{figure}

\begin{table}[!h]
  \centering
  \small
    \renewcommand*{\arraystretch}{1.0}
  \caption{Unconstrained QP.
  }
  \label{tab:unconstrained_qp_table}
  \vspace*{-3mm}
  \begin{tabular}{l}
  \iters
  \end{tabular}
  \adjustbox{max width=\textwidth}{
    \begin{tabular}{lllllllllllll}
    Fp res.&
    \begin{tabular}{@{}c@{}}Cold \\ Start\end{tabular}
    &
    \begin{tabular}{@{}c@{}}Nearest \\ Neighbor\end{tabular}
    &
    \begin{tabular}{@{}c@{}}Fp \\$k=0$ \end{tabular}
    &
    \begin{tabular}{@{}c@{}}Fp \\$k=5$ \end{tabular}
    &
    \begin{tabular}{@{}c@{}}Fp \\$k=15$ \end{tabular}
    &
    \begin{tabular}{@{}c@{}}Fp \\$k=30$ \end{tabular}
    &
    \begin{tabular}{@{}c@{}}Fp \\$k=60$ \end{tabular}
    &
    \begin{tabular}{@{}c@{}}Reg \\$k=0$ \end{tabular}
    &
    \begin{tabular}{@{}c@{}}Reg \\$k=5$ \end{tabular}
    &
    \begin{tabular}{@{}c@{}}Reg \\$k=15$ \end{tabular}
    &
    \begin{tabular}{@{}c@{}}Reg \\$k=30$ \end{tabular}
    &
    \begin{tabular}{@{}c@{}}Reg \\$k=60$ \end{tabular} \\
    % & $k=0$ & $k=5$ & $k=15$ & $k=30$ \\
    \midrule
    % Add directly from csv reader
    % the csv file has names colnameA, colnameB
    \csvreader[head to column names, late after line=\\]{./data/iters_red/unconstrained_qp.csv}{
    accuracies=\colA,
    cold_start_iters=\colC,
    nearest_neighbor_iters=\colD,
    obj_k0_iters=\colE,
    obj_k5_iters=\colF,
    obj_k15_iters=\colG,
    obj_k30_iters=\colH,
    obj_k60_iters=\colI,
    reg_k0_iters=\colJ,
    reg_k5_iters=\colK,
    reg_k15_iters=\colL,
    reg_k30_iters=\colM,
    reg_k60_iters=\colN,
    }{\colA & \colC & \colD & \colE & \colF & \colG & \colH & \colI & \colJ & \colK & \colL & \colM & \colN}
    \bottomrule
  \end{tabular}}
  \begin{tabular}{l}
    \reduction
  \end{tabular}
  \adjustbox{max width=\textwidth}{
    \begin{tabular}{lllllllllllll}
    Fp res.&
    \begin{tabular}{@{}c@{}}Cold \\ Start\end{tabular}
    &
    \begin{tabular}{@{}c@{}}Nearest \\ Neighbor\end{tabular}
    &
    \begin{tabular}{@{}c@{}}Fp \\$k=0$ \end{tabular}
    &
    \begin{tabular}{@{}c@{}}Fp \\$k=5$ \end{tabular}
    &
    \begin{tabular}{@{}c@{}}Fp \\$k=15$ \end{tabular}
    &
    \begin{tabular}{@{}c@{}}Fp \\$k=30$ \end{tabular}
    &
    \begin{tabular}{@{}c@{}}Fp \\$k=60$ \end{tabular}
    &
    \begin{tabular}{@{}c@{}}Reg \\$k=0$ \end{tabular}
    &
    \begin{tabular}{@{}c@{}}Reg \\$k=5$ \end{tabular}
    &
    \begin{tabular}{@{}c@{}}Reg \\$k=15$ \end{tabular}
    &
    \begin{tabular}{@{}c@{}}Reg \\$k=30$ \end{tabular}
    &
    \begin{tabular}{@{}c@{}}Reg \\$k=60$ \end{tabular} \\
    % & $k=0$ & $k=5$ & $k=15$ & $k=30$ \\
    \midrule
    % Add directly from csv reader
    % the csv file has names colnameA, colnameB
    \csvreader[head to column names, late after line=\\]{./data/iters_red/unconstrained_qp.csv}{
    accuracies=\colA,
    cold_start_red=\colC,
    nearest_neighbor_red=\colD,
    obj_k0_red=\colE,
    obj_k5_red=\colF,
    obj_k15_red=\colG,
    obj_k30_red=\colH,
    obj_k60_red=\colI,
    reg_k0_red=\colJ,
    reg_k5_red=\colK,
    reg_k15_red=\colL,
    reg_k30_red=\colM,
    reg_k60_red=\colN,
    }{\colA & \colC & \colD & \colE & \colF & \colG & \colH & \colI & \colJ & \colK & \colL & \colM & \colN}
    \bottomrule
  \end{tabular}}
\end{table}

\subsection{Proximal gradient descent}\label{sec:proxgd_experiments}
% In this subsection, we apply our learning framework to the proximal gradient descent algorithm from Table~\ref{table:fp_algorithms}.
\subsubsection{Lasso}
% The only example problem that we apply our learning method to for proximal gradient descent is the lasso problem.
We first consider the lasso problem
\begin{equation*}
  \begin{array}{ll}
  \label{prob:lasso}
  \mbox{minimize} & (1 / 2)\|Az-b\|_2^2 + \lambda \|z\|_1,
  \end{array}
\end{equation*}
where $A \in \reals^{m \times n}$, $b \in \reals^m$, and $\lambda \in \reals_{++}$ are problem data and $z \in \reals^n$ is the decision variable.
The parameter here is $\theta = b$.
% The proximal gradient algorithm is the fixed-point algorithm of choice for this experiment.

\paragraph{Numerical example.}
We generate $A \in \reals^{500 \times 500}$ with i.i.d standard Gaussian entries and pick $\lambda = 10$.
% We let $\theta = b$ and sample each $b$ vector from the uniform distribution $\mathcal{U}[0,30]$.
We sample each $b$ vector from the uniform distribution $\mathcal{U}[0,30]$.

\paragraph{Results.}
Figure~\ref{fig:lasso_results} and Table~\ref{tab:lasso_table} show the convergence behavior of our method.
% \bnote{I would add a remark here and possibly reduce the information in figure caption.}
While most of the learned warm starts significantly improve upon the baselines, the warm starts learned with $k=5$ and the regression loss perform the best.

\begin{figure}[!h]
  \centering
    \includegraphics[width=\figsize\linewidth]{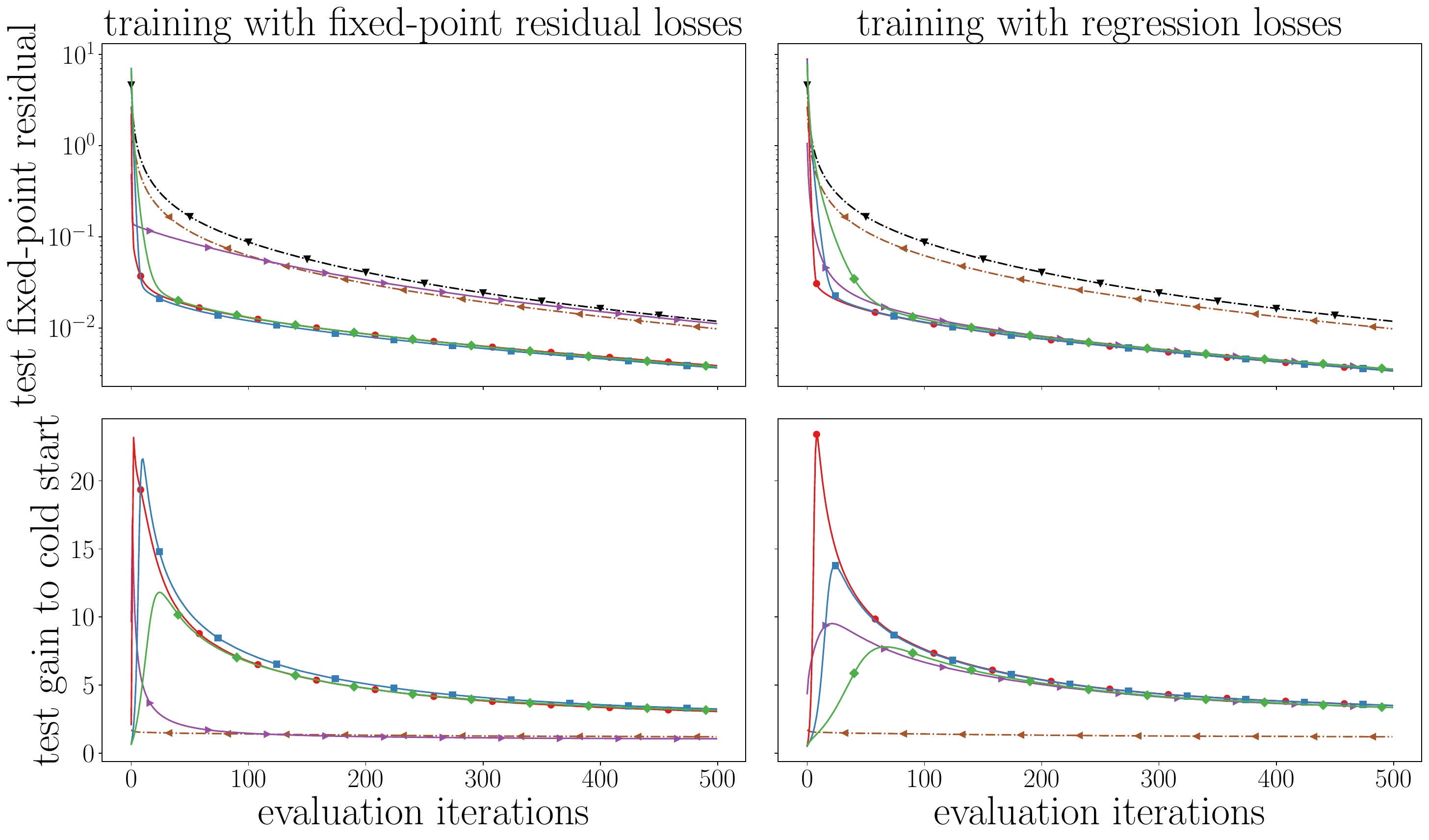}
    \\
  % \cblock{0}{0}{0} cold start \hspace{1mm}
  % \cblock{191}{0}{191} nearest neighbor warm-start \hspace{1mm}
  \legend\\
  % \linedowntri{0}{0}{0} cold start \hspace{1mm}
  % \linerighttri{152}{78}{163} nearest neighbor warm start \hspace{1mm}
  % \\learned warm-start $k=$\{\hspace{-1mm}
  %     % \cblock{101}{55}{0} $0$
  %     \linelefttri{166}{86}{40} $0$
  %     % \cblock{0}{0}{255} $5$
  %     \linecircle{228}{26}{28} $5$
  %     \linesquare{55}{126}{184} $15$
  %     % \cblock{255}{127}{14} $15$
  %     % \cblock{255}{0}{0} $15$
  %     % \cblock{44}{160}{44} $30$
  %     % \cblock{255}{165}{0} $60$
  %     % \linetri{77}{175}{74} $60$
  %     \linediamond{77}{175}{74} $60$
  %     % \cblock{146}{149}{145} $120$
  %     \}\hspace*{-2mm}\\
      % The dash-dotted lines train with fixed-point residual loss and the solid lines train with the regression loss.
    \caption{Lasso results.
      All of the learned warm starts except for $k=0$ with fixed-point residual loss significantly improve upon the baselines for any number of steps up to $500$.
    }
    \label{fig:lasso_results}
\end{figure}

\begin{table}[!h]
  \centering
  \small
    \renewcommand*{\arraystretch}{1.0}
  \caption{Lasso.
  % We compare the two baselines, the cold-start and nearest neighbor initializations, against our learned warm starts with values $k=0,5,15,30,60$ for both the fixed-point residual and regression losses.
  }
  \label{tab:lasso_table}
  \vspace*{-3mm}
  \begin{tabular}{l}
  \iters
  \end{tabular}
  \adjustbox{max width=\textwidth}{
    \begin{tabular}{lllllllllllll}
    Fp res.&
    \begin{tabular}{@{}c@{}}Cold \\ Start\end{tabular}
    &
    \begin{tabular}{@{}c@{}}Nearest \\ Neighbor\end{tabular}
    &
    \begin{tabular}{@{}c@{}}Fp \\$k=0$ \end{tabular}
    &
    \begin{tabular}{@{}c@{}}Fp \\$k=5$ \end{tabular}
    &
    \begin{tabular}{@{}c@{}}Fp \\$k=15$ \end{tabular}
    &
    \begin{tabular}{@{}c@{}}Fp \\$k=30$ \end{tabular}
    &
    \begin{tabular}{@{}c@{}}Fp \\$k=60$ \end{tabular}
    &
    \begin{tabular}{@{}c@{}}Reg \\$k=0$ \end{tabular}
    &
    \begin{tabular}{@{}c@{}}Reg \\$k=5$ \end{tabular}
    &
    \begin{tabular}{@{}c@{}}Reg \\$k=15$ \end{tabular}
    &
    \begin{tabular}{@{}c@{}}Reg \\$k=30$ \end{tabular}
    &
    \begin{tabular}{@{}c@{}}Reg \\$k=60$ \end{tabular} \\
    % & $k=0$ & $k=5$ & $k=15$ & $k=30$ \\
    \midrule
    % Add directly from csv reader
    % the csv file has names colnameA, colnameB
    \csvreader[head to column names, late after line=\\]{./data/iters_red/lasso.csv}{
    accuracies=\colA,
    cold_start_iters=\colC,
    nearest_neighbor_iters=\colD,
    obj_k0_iters=\colE,
    obj_k5_iters=\colF,
    obj_k15_iters=\colG,
    obj_k30_iters=\colH,
    obj_k60_iters=\colI,
    reg_k0_iters=\colJ,
    reg_k5_iters=\colK,
    reg_k15_iters=\colL,
    reg_k30_iters=\colM,
    reg_k60_iters=\colN,
    }{\colA & \colC & \colD & \colE & \colF & \colG & \colH & \colI & \colJ & \colK & \colL & \colM & \colN}
    \bottomrule
  \end{tabular}}
  \begin{tabular}{l}
    \reduction
  \end{tabular}
  \adjustbox{max width=\textwidth}{
    \begin{tabular}{lllllllllllll}
    Fp res.&
    \begin{tabular}{@{}c@{}}Cold \\ Start\end{tabular}
    &
    \begin{tabular}{@{}c@{}}Nearest \\ Neighbor\end{tabular}
    &
    \begin{tabular}{@{}c@{}}Fp \\$k=0$ \end{tabular}
    &
    \begin{tabular}{@{}c@{}}Fp \\$k=5$ \end{tabular}
    &
    \begin{tabular}{@{}c@{}}Fp \\$k=15$ \end{tabular}
    &
    \begin{tabular}{@{}c@{}}Fp \\$k=30$ \end{tabular}
    &
    \begin{tabular}{@{}c@{}}Fp \\$k=60$ \end{tabular}
    &
    \begin{tabular}{@{}c@{}}Reg \\$k=0$ \end{tabular}
    &
    \begin{tabular}{@{}c@{}}Reg \\$k=5$ \end{tabular}
    &
    \begin{tabular}{@{}c@{}}Reg \\$k=15$ \end{tabular}
    &
    \begin{tabular}{@{}c@{}}Reg \\$k=30$ \end{tabular}
    &
    \begin{tabular}{@{}c@{}}Reg \\$k=60$ \end{tabular} \\
    % & $k=0$ & $k=5$ & $k=15$ & $k=30$ \\
    \midrule
    % Add directly from csv reader
    % the csv file has names colnameA, colnameB
    \csvreader[head to column names, late after line=\\]{./data/iters_red/lasso.csv}{
    accuracies=\colA,
    cold_start_red=\colC,
    nearest_neighbor_red=\colD,
    obj_k0_red=\colE,
    obj_k5_red=\colF,
    obj_k15_red=\colG,
    obj_k30_red=\colH,
    obj_k60_red=\colI,
    reg_k0_red=\colJ,
    reg_k5_red=\colK,
    reg_k15_red=\colL,
    reg_k30_red=\colM,
    reg_k60_red=\colN,
    }{\colA & \colC & \colD & \colE & \colF & \colG & \colH & \colI & \colJ & \colK & \colL & \colM & \colN}
    \bottomrule
  \end{tabular}}
\end{table}

\subsection{OSQP}\label{sec:osqp_experiments}
In this subsection, we apply our learning framework to the OSQP \citep{osqp} algorithm from Table~\ref{table:fp_algorithms} to solve convex quadratic programs (QPs).% of the form of problem~\eqref{prob:osqp}.
% Recall that these QPs take the form:
% \begin{equation}\label{prob:osqp_dup}
%   \begin{array}{ll} \mbox{minimize} & (1/2) x^T P x + c^T x \\
%     \mbox{subject to} & Ax = z \\
%     & l \leq w \leq u.
%     \end{array}
% \end{equation}
% \red{should we duplicate this? -- do we also need to duplicate saying what the decision variable and problem data are}
% We compare our learned warm starts against the baselines on the OSQP code written in C to compare the speed in terms of time.
We compare solve times using OSQP code written in C for our learned warm starts against the baselines.
Table~\ref{table:prob_sizes_osqp} shows the sizes of the problems we run: model predictive control of a quadcopter in \Sec~\ref{sec:quadcopter} and image deblurring in \Sec~\ref{sec:image_deblur}.

\begin{table}[!h]
  \small
  \centering
\renewcommand*{\arraystretch}{1.0}
\caption{Sizes of conic problems from Table~\ref{table:fp_algorithms} that we use OSQP to solve.
We give the number of primal constraints ($m$), size of the primal variable ($n$), and the parameter size, $d$.
}
\label{table:prob_sizes_osqp}
\begin{tabular}{lll}
\toprule
~ & Quadcopter & Image deblurring \\
\midrule
\csvreader[head to column names, late after line=\\]{data/prob_sizes_osqp.csv}{
dim=\colA,
quadcopter=\colB,
image_deblur=\colC,
}{\colA & \colB & \colC}
\bottomrule
\end{tabular}
\end{table}

\subsubsection{Model predictive control of a quadcopter}\label{sec:quadcopter}
In our next example, we use model predictive control~\citep{borrelli_mpc_book} to control a quadcopter to follow a reference trajectory.
% Model predictive control (MPC)  is a powerful tool for control problems.
% The main power of MPC is that it allows the current timeslot to be optimized, while keeping future timeslots in account.
The idea of MPC is to optimize over a finite horizon length, but then to only implement the first control before optimizing again.
% Thus, the problems in the MPC paradigm naturally fall in the scope of our work.
% Since an optimization problem needs to be re-solved on the order of milliseconds, it is essential to be able to solve these problems quickly.
% Since the dynamics of the quadcopter are non-linear, we linearize the dynamics around the current state and control \citep{nonlinear_mpc}.
Since the family of problems is sequential in nature, we add an additional baseline called the \emph{previous-solution warm start} where we shift the solution of the previous problem by one time index to warm-start the current problem.
% , \ie, if the previous problem has primal solution $(x_1, \dots, x_T, u_0, \dots, u_{T-1})$ where the $x_t$ and $u_t$ decision variables are the states and controls respectively.
% Then we warm-start the current problem with $(x_1, \dots, x_{T-1}, 0, u_1, \dots, u_{T-2}, 0)$.
% The dual solution can similarly be shifted since each constraint corresponds to a time index.

% \begin{enumerate}
%   \item High-level goal (control, tracking trajectory, non-linear dynamics)
%   \item MPC is a technique to solve this problem -- linearize around the current state and previous control
%   \item This framework naturally fits our framework since we have a parametric optimization problem
%   \item Previous solution baseline
% \end{enumerate}
We model the quadcopter as a rigid body controlled by four motors as in \citet{quadcopter_pendulum}.
The state vector is $x = (p, v, q) \in \reals^{n_x}$ where the state size is $n_x = 10$.
% The position vector $p = (p_x, p_y, p_z) \in \reals^3$ indicates the coordinates of the center of the quadcopter, and the velocity vector $v = (v_x, v_y, v_z) \in \reals^3$ indicates the velocities in each of the three spatial dimensions.
The position vector $p = (p_x, p_y, p_z) \in \reals^3$ and the velocity vector $v = (v_x, v_y, v_z) \in \reals^3$ indicate the coordinates and velocities of the center of the quadcopter respectively.
% indicates the coordinates of the center of the quadcopter, and the velocity vector $v = (v_x, v_y, v_z) \in \reals^3$ indicates the velocities in each of the three spatial dimensions.
The vector $q = (q_w, q_x, q_y, q_z) \in \reals^4$ is the quaternion vector indicating the orientation of the quadcopter.
% The control inputs are $u = ()$
% The full state of the quadcopter is given as $x = (p, v, \psi, \omega)$ where $p = (p_x, p_y, p_z) \in \reals^3$ is the position vector indicating the coordinates of the center of the quadcopter in three-dimensional space, and $v = (v_x, v_y, v_z) \in \reals^3$ is the velocity vector, indicating the velocities in each of the three spatial dimensions.
% The vector $\psi=(\alpha, \phi, \delta) \in \reals^3$ is the roll, pitch, and yaw in the body frame, and $\omega = (\omega_x, \omega_y, \omega_z) \in \reals^3$ is a vector of the angular velocities in the inertial frame.
The inputs are $u = (c, \omega_x, \omega_y, \omega_z) \in \reals^{n_u}$ where the input size is $n_u = 4$.
The first input is the vertical thrust, and the last three are the angular velocities in the body frame.
The dynamics are
\begin{align*}
\small
  \dot{p} &= v, &
  \dot{v} &= \begin{bmatrix}
    2 (q_w q_y + q_x q_z) c  \\
    2 (q_w q_y + q_x q_z) c \\
    (q_w^2 - q_x^2 - q_y^2 + q_z^2) c - g
  \end{bmatrix}, &
  \dot{q} &= \frac{1}{2}\begin{bmatrix}
    -w_x q_x - w_y q_y - w_z q_z\\
    w_x q_w - w_y q_z + w_z q_y\\
    w_x q_z + w_y q_w - w_z q_x\\
    w_x q_y + w_y q_x + w_z q_w\\
  \end{bmatrix},
\end{align*}
where $g$ is the gravitational constant.
% \begin{align}
%   \dot{p} &= v \\
%   \dot{v} &= -\frac{k_{\rm{dr}}}{m} v +
%   \begin{bmatrix}
%     \cpitch \cyaw - \croll \spitch \syaw & -\cyaw \spitch - \cpitch \croll \syaw & \sroll \syaw \\
%     \croll \spitch \cyaw + \cpitch \syaw & \croll \cpitch \cyaw - \spitch \syaw & -\sroll \cyaw \\
%     \sroll \spitch & -\cpitch \sroll & \croll \\
%   \end{bmatrix}
%   \begin{bmatrix}
%     0 \\
%     0 \\
%     \sum_{i=1}^4 u_i
%   \end{bmatrix} - \begin{bmatrix}
%     0 \\
%     0 \\
%     g
%   \end{bmatrix} \\
%   \dot{\psi} &= \begin{bmatrix}
%     1 & 0 & -\sroll \\
%     0 & c_{\phi} & \croll s_{\phi} \\
%     0 & -s_{\phi} & \croll c_{\phi} \\
%   \end{bmatrix}^{-1} \omega \\
%   \dot{\omega} &= J (\tau - \omega \times (J  \omega))
% \end{align}
% where $\tau \in \reals^3$ is the torque which is a function of the controls, $k_{\rm{dr}} \in \reals_+$ is the drag coefficient, $g \in \reals_+$ is the gravitational constant, $m \in \reals_+$ is the mass of the quadcopter.
% To denote the sine and cosine of the roll, we use the notation $\sroll = \sin(\alpha)$ and $\croll = \cos(\alpha)$.
% Similarly, we employ a similar convention for representing the sine and cosine of the pitch and yaw angles.
% The torque is computed from the individual thrusts as
% \begin{equation*}
%   \tau = \begin{bmatrix}
%     L k (u_1 - u_3) \\
%     L k (u_2 - u_4) \\
%     b (u_1 - u_2 + u_3 - u_4)
%   \end{bmatrix}
% \end{equation*}
% and the matrix $J$ is the inertia matrix, $J = \rm{diag}(I_{xx}, I_{yy}, I_{zz})$.
% $L$ is the length, $k$ is the thrust coefficient.
At each time step, the goal is to track a reference trajectory given by $x^{\rm{ref}} = (x^{\rm{ref}}_1, \dots, x^{\rm{ref}}_T)$, while satisfying constraints on the states and the controls.
We discretize the system with $\Delta t$ and solve the QP
\begin{equation*}
  \begin{array}{ll}
  \label{eq:quadcopter_qp}
  \mbox{minimize} & (x_{T} - x_T^{\rm{ref}})^T Q_{T} (x_{T} - x_T^{\rm{ref}}) + \sum_{t=1}^{T-1} (x_{t} - x_t^{\rm{ref}})^T Q (x_{t} - x_t^{\rm{ref}}) + u_t^T R u_t \\
  \mbox{subject to} & x_{t+1} = A x_t + B u_t \quad t=0, \dots, T-1 \\
  & u_{\textrm{min}} \leq u_t \leq u_{\textrm{min}} \quad t=0, \dots, T-1 \\
  & x_{\textrm{min}} \leq x_t \leq x_{\textrm{max}} \quad t=1, \dots, T\\
  & |u_{t+1} - u_t| \leq \Delta u \quad t=1, \dots, T-1.\\
  % & x_0 = x_{\rm init}\\
  \end{array}
\end{equation*}
Here, the decision variables are the states $(x_1, \dots, x_{T})$ where $x_t \in \reals^{n_x}$ and the controls $(u_1, \dots, u_{T-1})$ where $u_t \in \reals^{n_u}$.
% We denote the state size as $n_x$, so $x_t \in \reals^{n_x}$.
% Similarly, we denote the control size as $n_u$, so $u_t \in \reals^{n_u}$.
% The initial state, $x_0$, and previous control input, $u_0$, are problem data.
The dynamics matrices $A$ and $B$ are determined by linearizing the dynamics around the current state $x_0$, and the previous control input $u_0$~\citep{nonlinear_mpc}.
% The last three constraints bound the states, the controls, and the change in controls respectively.
The matrices $Q, Q_T \in \symm_+^{n_x}$ penalize the distance of the states to the reference trajectory, $(x_1^{\rm{ref}}, \dots, x_T^{\rm{ref}})$.
The matrix $R \in \symm_{++}^{n_u}$ regularizes the controls.
% Under the MPC paradigm, it is natural to assume that the cost matrices and the bounds on states, controls, and change in controls remains constant across the problems.
% Although the dynamics matrices, $A$ and $B$, change for each problem, they are functions of the initial state, $x_0$, and the previous control, $u_0$.
The parameter is $\theta = (x_0, u_0, x_1^{\rm{ref}}, \dots, x_T^{\rm{ref}}) \in \reals^{(T + 1) n_x + n_u}$.
% Although the dynamics matrices, $A$ and $B$, change for each problem, they are functions of the initial state, $x_0$, and the previous control, $u_0$.
% Now, we describe how the family of problems is created.
% We consider many different trajectories with a simulation length which is larger than the time horizon under consideration for each QP, $T$.
% In each simulation, we generate a different reference trajectory of length $T_{\rm{sim}}$ in $(x,y,z)$ coordinates.
We generate many different trajectories where the simulation length is larger than the time horizon $T$.
% In our training set, we solve each of the QPs to optimality and the implement the first control to generate the next QP.
% The reference trajectory considered for the $i$-th QP starts at $x^{\rm{ref}}_i$ and ends at $x^{\rm{ref}}_{T+i}$.

% We generate smooth trajectories by picking $n_{\rm{waypoints}}$ number of waypoints.
% Then, we use a B-spline to smoothly interpolate between those waypoints to generate smooth trajectories.
% These trajectories form the reference trajectories $x^{\rm{ref}}$.
% The reference trajectory considered for the $t$-th QP starts at $x^{\rm{ref}}_t$ and ends at $x^{\rm{ref}}_{T+t}$.

% Our high-level goal is move the quadcopter around to hit $n_{\rm{goal}}$ different goal positions as quickly as possible.
% To do so, we use model predictive control, a control strategy widely used in robotics and engineering.
% It involves repeatedly solving an optimization problem to find the optimal control actions over a finite time horizon, while considering a dynamic model of the system.
% After each optimization problem is solved, the first control is implemented before a new optimization is solved.
% By predicting the future behavior of the system, MPC can generate control actions that optimize a desired objective while accounting for constraints, making it suitable for systems with complex dynamics and constraints.
% Our training set will consist of many rollouts

% \citep{quadcopter_dynamics},

\paragraph{Numerical example.}
% The state size and control size are $n_x=10$ and $n_u=4$ respectively.
We discretize our continuous time model with a value of $\Delta t = 0.05$ seconds.
The gravitational constant is $9.8$.
Each trajectory has a length of $100$, and the horizon we consider at each timestep for each QP is $T=10$.
We use state bounds of $x_{\rm{max}} = -x_{\rm{min}} = (1, 1, 1, 10, 10, 10, 1, 1, 1, 1)$.
We constrain the controls with $u_{\rm{max}} = (20, 6, 6, 6)$ and $u_{\rm{min}} = (2, -6, -6, -6)$, and set $\Delta u = (18, 6, 6, 6)$.
For each simulation, the quadcopter is initialized at $p = v = 0$ and $q = (1, 0, 0, 0)$.
% Each simulation takes a different reference trajectory.
We sample $5$ waypoints for each of the $(x,y,z)$ coordinates for each trajectory from the uniform distribution, $\mathcal{U}[-0.5, 0.5]$.
Then we use a B-spline~\citep{b_spline} to smoothly interpolate between the waypoints to generate $100$ points for the entire trajectory.
Since each reference trajectory is made up of $(x,y,z)$ coordinates rather than the full state vector, we shorten the parameter size to $\theta \in \reals^{n_x + n_u + 3T}$.

\paragraph{Results.}
% Control problems like flying a quadcopter require solutions to be computed on the order of milliseconds.
Figure~\ref{fig:quadcopter_results} and Table~\ref{tab:quadcopter_times} show the convergence behavior of our method.
While all of the warm starts learned with the regression losses deliver substantial improvements over the baselines, our method using $k=60$ with the fixed-point residual loss stands out as the best for a larger number of steps.
To simulate a strict latency requirement, we also compare various initialization techniques in a closed-loop system where only $15$ OSQP iterations are allowed per QP in Figure~\ref{fig:quadcopter_visualize}.
% While the previous-solution and nearest-neighbor initializations cannot effectively track the reference trajectory, the learned warm start comes very close.
The learned warm start can more accurately track the reference trajectory compared with the other two methods.
% We illustrate the effectiveness of our learned warm-start approach in this setting of flying a quadcopter to track reference trajectories in figure~\ref{fig:quadcopter_visualize}.

\begin{table}[!h]
  \centering
  \small
    \renewcommand*{\arraystretch}{1.0}
  \caption{Quadcopter.
  }
  \label{tab:quadcopter_times}
  \vspace*{-3mm}
  \begin{tabular}{l}
  \iters
  \end{tabular}
  \adjustbox{max width=\textwidth}{
    \begin{tabular}{llllllllllllll}
    Fp res.&
    \begin{tabular}{@{}c@{}}Cold \\ Start\end{tabular}
    &
    \begin{tabular}{@{}c@{}}Nearest \\ Neighbor\end{tabular}
    &
    \begin{tabular}{@{}c@{}}Previous \\ Solution\end{tabular}
    &
    \begin{tabular}{@{}c@{}}Fp \\$k=0$ \end{tabular}
    &
    \begin{tabular}{@{}c@{}}Fp \\$k=5$ \end{tabular}
    &
    \begin{tabular}{@{}c@{}}Fp \\$k=15$ \end{tabular}
    &
    \begin{tabular}{@{}c@{}}Fp \\$k=30$ \end{tabular}
    &
    \begin{tabular}{@{}c@{}}Fp \\$k=60$ \end{tabular}
    &
    \begin{tabular}{@{}c@{}}Reg \\$k=0$ \end{tabular}
    &
    \begin{tabular}{@{}c@{}}Reg \\$k=5$ \end{tabular}
    &
    \begin{tabular}{@{}c@{}}Reg \\$k=15$ \end{tabular}
    &
    \begin{tabular}{@{}c@{}}Reg \\$k=30$ \end{tabular}
    &
    \begin{tabular}{@{}c@{}}Reg \\$k=60$ \end{tabular} \\
    % & $k=0$ & $k=5$ & $k=15$ & $k=30$ \\
    \midrule
    % Add directly from csv reader
    % the csv file has names colnameA, colnameB
    \csvreader[head to column names, late after line=\\]{./data/iters_red/quadcopter.csv}{
    accuracies=\colA,
    cold_start_iters=\colC,
    nearest_neighbor_iters=\colD,
    prev_sol_iters=\colB,
    obj_k0_iters=\colE,
    obj_k5_iters=\colF,
    obj_k15_iters=\colG,
    obj_k30_iters=\colH,
    obj_k60_iters=\colI,
    reg_k0_iters=\colJ,
    reg_k5_iters=\colK,
    reg_k15_iters=\colL,
    reg_k30_iters=\colM,
    reg_k60_iters=\colN,
    }{\colA & \colC & \colD & \colB & \colE & \colF & \colG & \colH & \colI & \colJ & \colK & \colL & \colM & \colN}
    \bottomrule
  \end{tabular}}
  \begin{tabular}{l}
    \reduction
  \end{tabular}
  \adjustbox{max width=\textwidth}{
    \begin{tabular}{llllllllllllll}
    Fp res.&
    \begin{tabular}{@{}c@{}}Cold \\ Start\end{tabular}
    &
    \begin{tabular}{@{}c@{}}Nearest \\ Neighbor\end{tabular}
    &
    \begin{tabular}{@{}c@{}}Previous \\ Solution\end{tabular}
    &
    \begin{tabular}{@{}c@{}}Fp \\$k=0$ \end{tabular}
    &
    \begin{tabular}{@{}c@{}}Fp \\$k=5$ \end{tabular}
    &
    \begin{tabular}{@{}c@{}}Fp \\$k=15$ \end{tabular}
    &
    \begin{tabular}{@{}c@{}}Fp \\$k=30$ \end{tabular}
    &
    \begin{tabular}{@{}c@{}}Fp \\$k=60$ \end{tabular}
    &
    \begin{tabular}{@{}c@{}}Reg \\$k=0$ \end{tabular}
    &
    \begin{tabular}{@{}c@{}}Reg \\$k=5$ \end{tabular}
    &
    \begin{tabular}{@{}c@{}}Reg \\$k=15$ \end{tabular}
    &
    \begin{tabular}{@{}c@{}}Reg \\$k=30$ \end{tabular}
    &
    \begin{tabular}{@{}c@{}}Reg \\$k=60$ \end{tabular} \\
    % & $k=0$ & $k=5$ & $k=15$ & $k=30$ \\
    \midrule
    % Add directly from csv reader
    % the csv file has names colnameA, colnameB
    \csvreader[head to column names, late after line=\\]{./data/iters_red/quadcopter.csv}{
    accuracies=\colA,
    cold_start_red=\colC,
    nearest_neighbor_red=\colD,
    prev_sol_red=\colB,
    obj_k0_red=\colE,
    obj_k5_red=\colF,
    obj_k15_red=\colG,
    obj_k30_red=\colH,
    obj_k60_red=\colI,
    reg_k0_red=\colJ,
    reg_k5_red=\colK,
    reg_k15_red=\colL,
    reg_k30_red=\colM,
    reg_k60_red=\colN,
    }{\colA & \colC & \colD & \colB & \colE & \colF & \colG & \colH & \colI & \colJ & \colK & \colL & \colM & \colN}
    \bottomrule
  \end{tabular}}
  % \footnotesize
  \begin{tabular}{l}
  \osqptiming
  \end{tabular}
  \adjustbox{max width=\textwidth}{
    \begin{tabular}{llllllllllllll}
    tol.&
    \begin{tabular}{@{}c@{}}Cold \\ Start\end{tabular}
    &
    \begin{tabular}{@{}c@{}}Nearest \\ Neighbor\end{tabular}
    &
    \begin{tabular}{@{}c@{}}Previous \\ Solution\end{tabular}
    &
    \begin{tabular}{@{}c@{}}Fp \\$k=0$ \end{tabular}
    &
    \begin{tabular}{@{}c@{}}Fp \\$k=5$ \end{tabular}
    &
    \begin{tabular}{@{}c@{}}Fp \\$k=15$ \end{tabular}
    &
    \begin{tabular}{@{}c@{}}Fp \\$k=30$ \end{tabular}
    &
    \begin{tabular}{@{}c@{}}Fp \\$k=60$ \end{tabular}
    &
    \begin{tabular}{@{}c@{}}Reg \\$k=0$ \end{tabular}
    &
    \begin{tabular}{@{}c@{}}Reg \\$k=5$ \end{tabular}
    &
    \begin{tabular}{@{}c@{}}Reg \\$k=15$ \end{tabular}
    &
    \begin{tabular}{@{}c@{}}Reg \\$k=30$ \end{tabular}
    &
    \begin{tabular}{@{}c@{}}Reg \\$k=60$ \end{tabular} \\
    % & $k=0$ & $k=5$ & $k=15$ & $k=30$ \\
    \midrule
    % Add directly from csv reader
    % the csv file has names colnameA, colnameB
    \csvreader[head to column names, late after line=\\]{./data/timings/quadcopter.csv}{
    rel_tols=\colA,
    cold_start=\colC,
    nearest_neighbor=\colD,
    prev_sol=\colB,
    obj_k0=\colE,
    obj_k5=\colF,
    obj_k15=\colG,
    obj_k30=\colH,
    obj_k60=\colI,
    reg_k0=\colJ,
    reg_k5=\colK,
    reg_k15=\colL,
    reg_k30=\colM,
    reg_k60=\colN,
    }{\colA & \colC & \colD & \colB & \colE & \colF & \colG & \colH & \colI & \colJ & \colK & \colL & \colM & \colN}
    \bottomrule
  \end{tabular}}
\end{table}

\begin{figure}[!h]
  \centering
    \includegraphics[width=\figsize\linewidth]{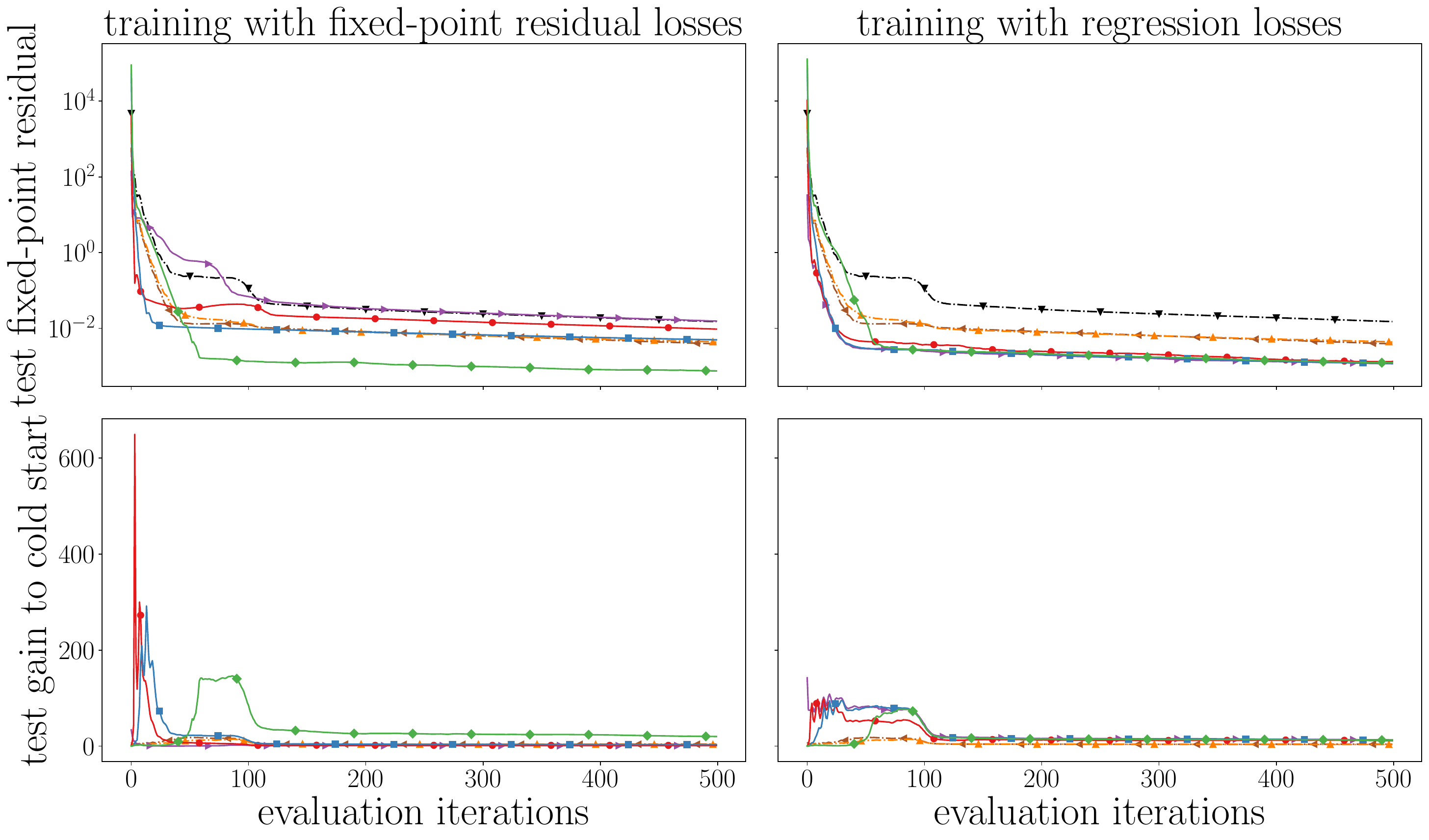}
    \\
    \legendprevsol\\
  % \linedowntri{0}{0}{0} cold start \hspace{1mm}
  % \linerighttri{152}{78}{163} nearest neighbor warm start \hspace{1mm}
  % \lineuptri{255}{127}{0} previous solution warm start \hspace{1mm}
  % \\learned warm-start $k=$\{\hspace{-1mm}
  %     \linelefttri{166}{86}{40} $0$
  %     \linecircle{228}{26}{28} $5$
  %     \linesquare{55}{126}{184} $15$
  %     \linediamond{77}{175}{74} $60$
  %     \}\hspace*{-2mm}\\
      % The dash-dotted lines train with fixed-point residual loss and the solid lines train with the regression loss.
    \caption{Quadcopter results.
      Learned warm starts offer substantial improvements over the baselines.
      In particular, warm starts learned with $k=60$ and the fixed-point residual loss have the largest gain for evaluation steps over about $50$.
      % In particular, the $k=60$ model with the fixed-point residual loss has the largest gain for evaluation steps over about $50$.
    }
    \label{fig:quadcopter_results}
\end{figure}
% \begin{figure}[!h]
%   \centering
%     \includegraphics[width=0.7\linewidth]{figures/example_plots/quadcopter.pdf}
%     \\
%   \cblock{0}{0}{0} cold start \hspace{1mm}
%   \cblock{191}{0}{191} nearest neighbor warm-start \hspace{1mm}
%   \\learned warm-start $k=$\{\hspace{-1mm}
%       \cblock{101}{55}{0} $0$
%       \cblock{0}{0}{255} $5$
%       % \cblock{255}{127}{14} $15$
%       \cblock{255}{0}{0} $15$
%       \cblock{44}{160}{44} $30$
%       \cblock{255}{165}{0} $60$
%       % \cblock{146}{149}{145} $120$
%       \}\hspace*{-2mm}\\
%       % The dash-dotted lines train with fixed-point residual loss and the solid lines train with the regression loss.
%     \caption{Quadcopter results.
%     }
%     \label{fig:quadcopter_results}
% \end{figure}

\ifpreprint
\newcommand{\quadcopterwidth}{\quadcopterwidthpreprint}
\else
\newcommand{\quadcopterwidth}{\quadcopterwidthjmlr}
\fi

\begin{figure}[!h]
  \vspace*{-5mm}
  \centering
  \begin{subfigure}[t]{\quadcopterwidth\linewidth}
    \centering
    \includegraphics[width=1\textwidth]{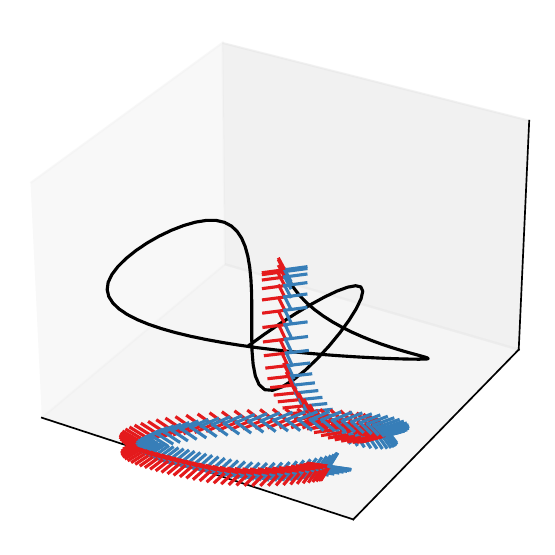}
    \label{fig:quad_prev_sol2}
  \end{subfigure}%
  \hspace*{4mm}
  \begin{subfigure}[t]{\quadcopterwidth\linewidth}
    \centering
    \includegraphics[width=1\linewidth]{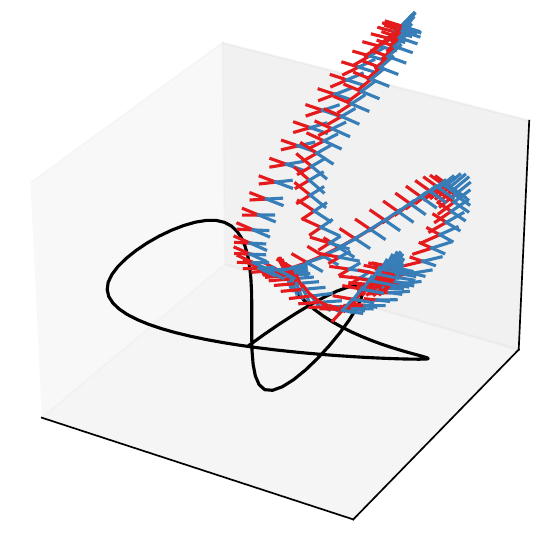}
    \label{fig:quad_nearest_neighbor2}
  \end{subfigure}
  \hspace*{4mm}
  \begin{subfigure}[t]{\quadcopterwidth\linewidth}
    \centering
    \includegraphics[width=1\linewidth]{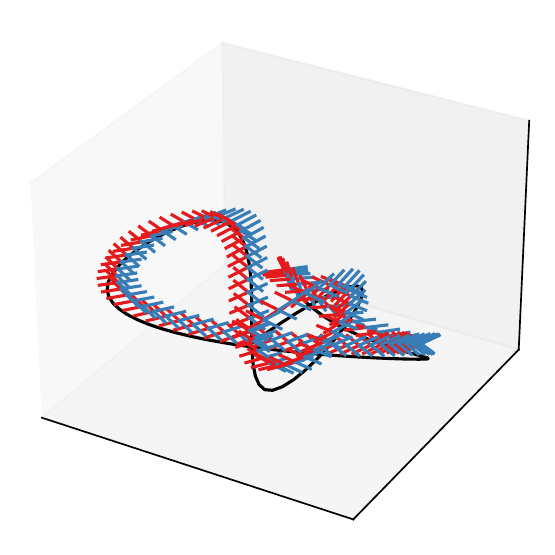}
    \label{fig:quad_k5_reg2}
  \end{subfigure}\\[-20pt]
  \begin{subfigure}[t]{\quadcopterwidth\linewidth}
    \centering
    \includegraphics[width=1\linewidth]{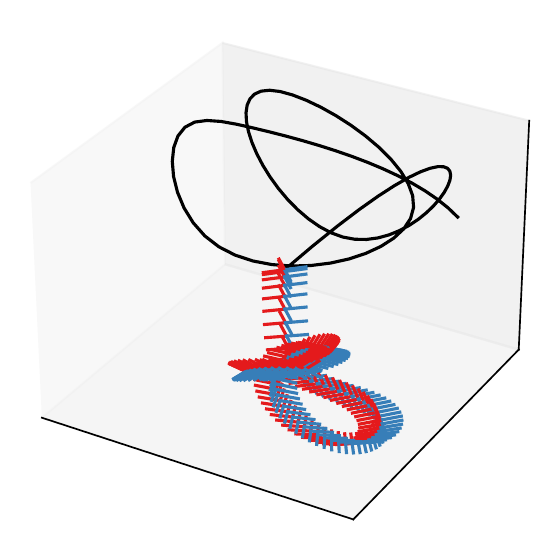}
    \caption{previous solution}
    \label{fig:quad_prev_sol3}
  \end{subfigure}%
  \hspace*{4mm}
  \begin{subfigure}[t]{\quadcopterwidth\linewidth}
    \centering
    \includegraphics[width=1\linewidth]{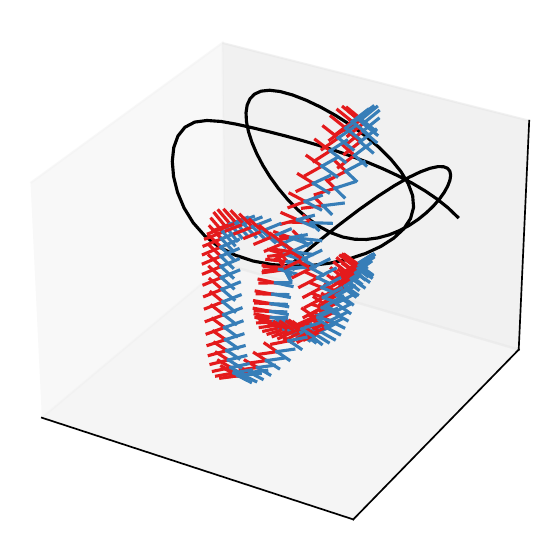}
    \caption{nearest neighbor}
    \label{fig:quad_nearest_neighbor}
  \end{subfigure}
  \hspace*{4mm}
  \begin{subfigure}[t]{\quadcopterwidth\linewidth}
    \centering
    \includegraphics[width=1\linewidth]{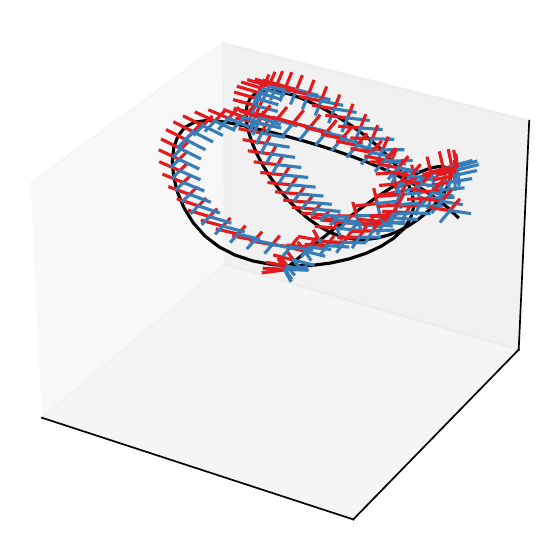}
    \caption{learned}
    \label{fig:quad_k5_reg3}
  \end{subfigure}
  \caption{Visualizing closed-loop MPC of flying a quadcopter to track a reference trajectory.
  Each row corresponds to a different unseen reference trajectory.
  % Each row corresponds to a different unseen reference trajectory of length $100$ that we track using MPC over a time horizon of $10$.
  Each column uses a different initialization scheme to track the same unseen black reference trajectory in a closed-loop.
  % In each of the three columns, we apply a different initialization scheme to track the same unseen black reference trajectory in a closed-loop.
  Each technique is given a budget of $15$ OSQP iterations to solve each QP.
  The learned approach which is trained on $k=5$ with the regression loss tracks the trajectory well compared against the other two.
  }
  \label{fig:quadcopter_visualize}
\end{figure}

% \subsubsection{Random MPC}
% In our next example, we take a common parametric setup in the control community called model predictive control (MPC).
% In MPC, only the initial state changes from problem to problem.
% We use the MPC setup from~\citep[section 8.4]{osqp}.
% \begin{equation*}
%   \begin{array}{ll}
%   \label{eq:random_mpc}
%   \mbox{minimize} & x_{T}^T Q_{T} x_{T} + \sum_{t=1}^{T-1} x_{t}^T Q x_{t} + u_t^T R u_t \\
%   \mbox{subject to} & x_{t+1} = A x_t + B u_t \quad t=0, \dots, T-1, \\
%   & -\bar{u} \leq u_t \leq \bar{u} \quad t=0, \dots, T-1 \\
%   & x_{\textrm{min}} \leq x_t \leq x_{\textrm{max}} \quad t=1, \dots, T,\\
%   & x_0 = x_{\rm init}\\
%   \end{array}
%   \end{equation*}
% where the states $x_t \in \reals^{n_x}$ and the inputs $u_t \in \reals^{n_u}$ are subject to lower and upper bounds. Matrices $A \in \reals^{n_x \times n_x}$ and $B\in\reals^{n_x \times n_u}$ define the system dynamics.

% \paragraph{Numerical example.}
% We take a time horizon of $T=10$ and $10$ simulation steps.
% We set $n_x=10$ and $n_u=5$.
% The state cost, $Q$, is a diagonal matrix $Q = \textbf{diag}(q)$ where $q \sim \mathcal{U}[0,10]$.
% We set the control cost to be $R=0.1 I$.
% The terminal cost, $Q_T$, is chosen as the discrete algebraic Riccati \Eqn~\citep{borrelli_mpc_book}.
% We generate the bounds on the constraints as $\bar{x} \sim \mathcal{U}[1,2]$ and $\bar{u} \sim \mathcal{U}[0,0.1]$.
% The initial state of each simulation is uniformly sampled from $\mathcal{U}[-0.5 \bar{x}, 0.5 \bar{x}]$.
% \red{Discuss previous solution baseline}

% \paragraph{Results.}

\subsubsection{Image deblurring}\label{sec:image_deblur}
We turn our attention to the task of image deblurring.
Given a blurry image $b \in \reals^{n}$, the goal is to recover the original image $x \in \reals^{n}$.
Both the noisy vector $b$ and the target vector $x$ are formed by stacking the columns of their respective images.
We formulate this well-studied problem~\citep{fista,nonneg_deblur} as
% ~\citep{twist,fista,Elad2007CoordinateAS,nonneg_deblur} as
\begin{equation*}
  \begin{array}{ll}
  \label{prob:img_deblur}
  \mbox{minimize} & \|A x - b\|_2^2 + \lambda \|x\|_1 \\
  \mbox{subject to} & 0 \leq x \leq 1. \\
  \end{array}
\end{equation*}
% Here, the decision variable, $x \in \reals^n$, is the vectorized deblurred image we seek where $n=hw$.
% The vectorized observed blurred image is $b \in \reals^n$ and the blur matrix is $A \in \reals^{n \times n}$.
% The value $n$ is $hw$.
Here, the matrix $A \in \reals^{n \times n}$ is the blur operator which represents a two-dimensional convolutional operator.
The regularization hyperparameter $\lambda \in \reals_{++}$,  weights the fidelity term $\|A x - b\|_2^2$, relative to the $\ell_1$ penalty.
The $\ell_1$ penalty is used as it less sensitive to outliers and encourages sparsity \citep{fista}.
The constraints ensure that the deblurred image has pixel values within its domain.
% The non-negativity constraint \citep{nonneg_deblur} ensures the deblurred image has non-negative pixel values.

% The blur matrix is computed as the kronecker product of a Gaussian kernel matrix $B_h \in \reals^{h \times h}$ with $B_w \in \reals^{w \times w}$, \ie, $A = B_h \otimes B_w$.
% The Gaussian kernel matrices themselves are Toeplitz matrices which take in a Gaussian kernel size.

\paragraph{Numerical example.}
% \begin{itemize}
%   \item EMNIST and cite
%   \item Blur size
%   \item Reference visual figure
% \end{itemize}
% We use the EMNIST dataset~\citep{emnist} to deblur handwritten letters.
We consider handwritten letters from the EMNIST dataset~\citep{emnist}.
% We take the letters and apply a Gaussian blur to it of size $8$ and add a Gaussian noise with standard deviation $0.001$.
We apply a Gaussian blue of size $8$ to each letter and then add i.i.d. Gaussian noise with standard deviation $0.001$.
The hyperparameter weighting term is $\lambda = 1e-4$.

\paragraph{Results.}
Figure~\ref{fig:image_deblur_results} and Table~\ref{tab:mnist_times} show the convergence behavior of our method.
Learned warm starts with the regression loss tend to outperform the learned warm starts with the fixed-point residual loss.
We show visualizations of our method in Figure~\ref{fig:mnist_vis}.
% For images that are particularly challenging, the gains from the learned warm start can dramatically improve the image quality within $50$ OSQP steps over the cold-start and nearest-neighbor initializations.
For images that are particularly challenging, the image quality after $50$ OSQP steps is significantly better for the learned warm start than the baseline initializations.
% the gains from the learned warm start can dramatically improve the image quality within $50$ OSQP steps over the cold-start and nearest-neighbor initializations.
\begin{figure}[!h]
  \centering
    \includegraphics[width=\figsize\linewidth]{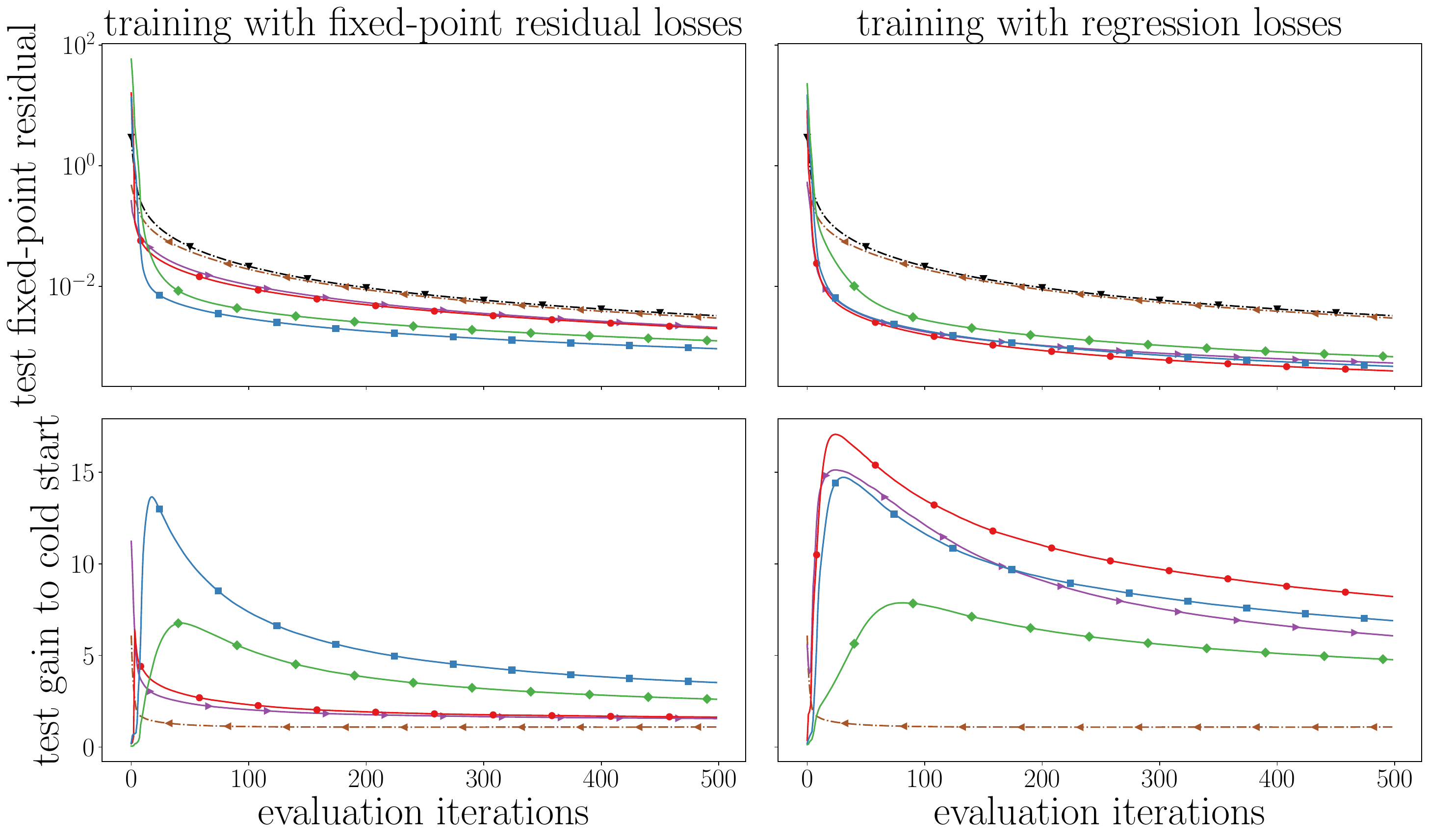}
    \\
    \legend
    % \linedowntri{0}{0}{0} cold start \hspace{1mm}
    % \linerighttri{152}{78}{163} nearest neighbor warm start \hspace{1mm}
    % \\learned warm-start $k=$\{\hspace{-1mm}
    %     \linelefttri{166}{86}{40} $0$
    %     \linecircle{228}{26}{28} $5$
    %     \linesquare{55}{126}{184} $15$
    %     \linediamond{77}{175}{74} $60$
    %     \}\hspace*{-2mm}\\
    % The dash-dotted lines train with fixed-point residual loss and the solid lines train with the regression loss.
    \caption{Image deblurring.
      % The regression loss models improve the most over the cold start.
      Warm starts learned with the regression loss provide bigger gains compared with those learned with the fixed-point residual loss.
    }
    \label{fig:image_deblur_results}
\end{figure}

\begin{table}[!h]
  \centering
    \renewcommand*{\arraystretch}{1.0}
  \caption{Image deblurring.
  % We compare the two baselines, the cold-start and nearest neighbor initializations, against our learned warm-starts with values $k=0,5,15,30,60$ for both the fixed-point residual and regression losses.
  }
  \label{tab:mnist_times}
  \small
  \vspace*{-3mm}
  \begin{tabular}{l}
    \iters
  \end{tabular}
  \adjustbox{max width=\textwidth}{
    \begin{tabular}{lllllllllllll}
    Fp res.&
    \begin{tabular}{@{}c@{}}Cold \\ Start\end{tabular}
    &
    \begin{tabular}{@{}c@{}}Nearest \\ Neighbor\end{tabular}
    &
    \begin{tabular}{@{}c@{}}Fp \\$k=0$ \end{tabular}
    &
    \begin{tabular}{@{}c@{}}Fp \\$k=5$ \end{tabular}
    &
    \begin{tabular}{@{}c@{}}Fp \\$k=15$ \end{tabular}
    &
    \begin{tabular}{@{}c@{}}Fp \\$k=30$ \end{tabular}
    &
    \begin{tabular}{@{}c@{}}Fp \\$k=60$ \end{tabular}
    &
    \begin{tabular}{@{}c@{}}Reg \\$k=0$ \end{tabular}
    &
    \begin{tabular}{@{}c@{}}Reg \\$k=5$ \end{tabular}
    &
    \begin{tabular}{@{}c@{}}Reg \\$k=15$ \end{tabular}
    &
    \begin{tabular}{@{}c@{}}Reg \\$k=30$ \end{tabular}
    &
    \begin{tabular}{@{}c@{}}Reg \\$k=60$ \end{tabular} \\
    % & $k=0$ & $k=5$ & $k=15$ & $k=30$ \\
    \midrule
    % Add directly from csv reader
    % the csv file has names colnameA, colnameB
    \csvreader[head to column names, late after line=\\]{./data/iters_red/mnist.csv}{
    accuracies=\colA,
    cold_start_iters=\colC,
    nearest_neighbor_iters=\colD,
    obj_k0_iters=\colE,
    obj_k5_iters=\colF,
    obj_k15_iters=\colG,
    obj_k30_iters=\colH,
    obj_k60_iters=\colI,
    reg_k0_iters=\colJ,
    reg_k5_iters=\colK,
    reg_k15_iters=\colL,
    reg_k30_iters=\colM,
    reg_k60_iters=\colN,
    }{\colA & \colC & \colD & \colE & \colF & \colG & \colH & \colI & \colJ & \colK & \colL & \colM & \colN}
    \bottomrule
  \end{tabular}}
  \begin{tabular}{l}
    \reduction
  \end{tabular}
  \adjustbox{max width=\textwidth}{
    \begin{tabular}{lllllllllllll}
    Fp res.&
    \begin{tabular}{@{}c@{}}Cold \\ Start\end{tabular}
    &
    \begin{tabular}{@{}c@{}}Nearest \\ Neighbor\end{tabular}
    &
    \begin{tabular}{@{}c@{}}Fp \\$k=0$ \end{tabular}
    &
    \begin{tabular}{@{}c@{}}Fp \\$k=5$ \end{tabular}
    &
    \begin{tabular}{@{}c@{}}Fp \\$k=15$ \end{tabular}
    &
    \begin{tabular}{@{}c@{}}Fp \\$k=30$ \end{tabular}
    &
    \begin{tabular}{@{}c@{}}Fp \\$k=60$ \end{tabular}
    &
    \begin{tabular}{@{}c@{}}Reg \\$k=0$ \end{tabular}
    &
    \begin{tabular}{@{}c@{}}Reg \\$k=5$ \end{tabular}
    &
    \begin{tabular}{@{}c@{}}Reg \\$k=15$ \end{tabular}
    &
    \begin{tabular}{@{}c@{}}Reg \\$k=30$ \end{tabular}
    &
    \begin{tabular}{@{}c@{}}Reg \\$k=60$ \end{tabular} \\
    % & $k=0$ & $k=5$ & $k=15$ & $k=30$ \\
    \midrule
    % Add directly from csv reader
    % the csv file has names colnameA, colnameB
    \csvreader[head to column names, late after line=\\]{./data/iters_red/mnist.csv}{
    accuracies=\colA,
    cold_start_red=\colC,
    nearest_neighbor_red=\colD,
    obj_k0_red=\colE,
    obj_k5_red=\colF,
    obj_k15_red=\colG,
    obj_k30_red=\colH,
    obj_k60_red=\colI,
    reg_k0_red=\colJ,
    reg_k5_red=\colK,
    reg_k15_red=\colL,
    reg_k30_red=\colM,
    reg_k60_red=\colN,
    }{\colA & \colC & \colD & \colE & \colF & \colG & \colH & \colI & \colJ & \colK & \colL & \colM & \colN}
    \bottomrule
  \end{tabular}}
  % \footnotesize
  \begin{tabular}{l}
  \osqptiming
  \end{tabular}
  \adjustbox{max width=\textwidth}{
    \begin{tabular}{lllllllllllll}
    tol.&
    \begin{tabular}{@{}c@{}}Cold \\ Start\end{tabular}
    &
    \begin{tabular}{@{}c@{}}Nearest \\ Neighbor\end{tabular}
    &
    \begin{tabular}{@{}c@{}}Fp \\$k=0$ \end{tabular}
    &
    \begin{tabular}{@{}c@{}}Fp \\$k=5$ \end{tabular}
    &
    \begin{tabular}{@{}c@{}}Fp \\$k=15$ \end{tabular}
    &
    \begin{tabular}{@{}c@{}}Fp \\$k=30$ \end{tabular}
    &
    \begin{tabular}{@{}c@{}}Fp \\$k=60$ \end{tabular}
    &
    \begin{tabular}{@{}c@{}}Reg \\$k=0$ \end{tabular}
    &
    \begin{tabular}{@{}c@{}}Reg \\$k=5$ \end{tabular}
    &
    \begin{tabular}{@{}c@{}}Reg \\$k=15$ \end{tabular}
    &
    \begin{tabular}{@{}c@{}}Reg \\$k=30$ \end{tabular}
    &
    \begin{tabular}{@{}c@{}}Reg \\$k=60$ \end{tabular} \\
    % & $k=0$ & $k=5$ & $k=15$ & $k=30$ \\
    \midrule
    % Add directly from csv reader
    % the csv file has names colnameA, colnameB
    \csvreader[head to column names, late after line=\\]{./data/timings/mnist.csv}{
    rel_tols=\colA,
    cold_start=\colC,
    nearest_neighbor=\colD,
    obj_k0=\colE,
    obj_k5=\colF,
    obj_k15=\colG,
    obj_k30=\colH,
    obj_k60=\colI,
    reg_k0=\colJ,
    reg_k5=\colK,
    reg_k15=\colL,
    reg_k30=\colM,
    reg_k60=\colN,
    }{\colA & \colC & \colD & \colE & \colF & \colG & \colH & \colI & \colJ & \colK & \colL & \colM & \colN}
    \bottomrule
  \end{tabular}}
\end{table}

% \begin{figure}[!h]
%   \centering
%     \includegraphics[width=0.7\linewidth]{figures/example_plots/mnist.pdf}
%     \label{fig:mnist_results}
%     \\
%   \cblock{0}{0}{0} cold-start \hspace{1mm}
%   \cblock{191}{0}{191} nearest neighbor warm-start \hspace{1mm}
%   \\learned warm-start $k=$\{\hspace{-1mm}
%       \cblock{101}{55}{0} $0$
%       \cblock{0}{0}{255} $5$
%       % \cblock{255}{127}{14} $15$
%       \cblock{255}{0}{0} $15$
%       \cblock{44}{160}{44} $30$
%       \cblock{255}{165}{0} $60$
%       \}\hspace*{-2mm}\\
%     % The dash-dotted lines train with fixed-point residual loss and the solid lines train with the regression loss.
%     \caption{MNIST image deblurring.
%     }
% \end{figure}
\begin{figure}[!h]
  \centering
    \includegraphics[width=0.55\linewidth]{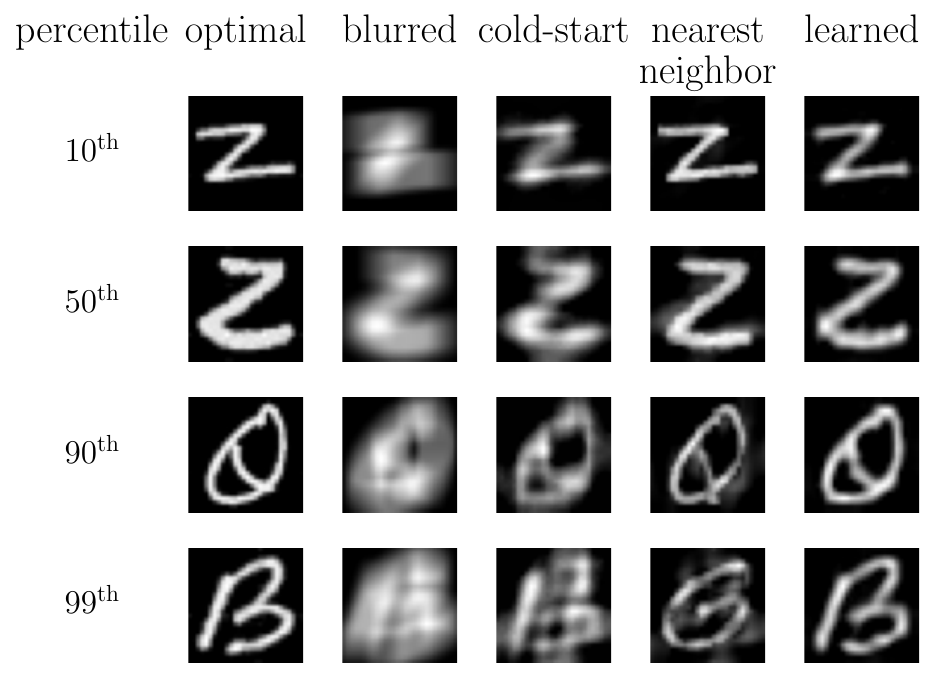}
    \\
    \caption{EMNIST image deblurring.
      Each row corresponds to an unseen sample from the EMNIST dataset.
      % The first and second columns of images show the original handwritten letter and the corresponding blurred image.
      The last three columns depict several different initialization techniques after $50$ OSQP steps.
      In the learned column, we use the regression loss with $k=5$.
      To adjust the difficulty of the images displayed, we select images corresponding to different percentiles of distance from the nearest neighbor to the optimal solution.
      % Specifically, we choose images at the $10^{\rm{th}}$, $50^{\rm{th}}$, $90^{\rm{th}}$, and $99^{\rm{th}}$ percentiles in terms of this distance.
      % As the percentile increases, the nearest neighbor becomes further away from the optimal unblurred image.
      % , resulting in more challenging images being shown.
    }
    \label{fig:mnist_vis}
\end{figure}

\subsection{SCS}\label{sec:scs_experiments}
In this subsection, we apply our learning framework to the SCS \citep{scs} algorithm from Table~\ref{table:fp_algorithms} to solve convex conic optimization problems.
We compare solve times using SCS code written in C for our learned warm starts against the baselines.
We run our experiments on two second-order cone programs (SOCPs) in robust Kalman filtering in \Sec~\ref{sec:rkf_experiment} and robust non-negative least squares in \Sec~\ref{sec:rls_experiment} and two semidefinite programs (SDPs) in phase retrieval in \Sec~\ref{sec:phase_retrieval_experiment} and sparse PCA in \Sec~\ref{sec:sparse_pca_experiment}.
% We first give examples of two second-order cone programs (SOCPs) in robust Kalman filtering in \Sec~\ref{sec:rkf_experiment} and robust non-negative least squares in \Sec~\ref{sec:rls_experiment}.
% Then we run our experiments on two semidefinite programs (SDPs) in phase retrieval in \Sec~\ref{sec:phase_retrieval_experiment} and sparse PCA in \Sec~\ref{sec:sparse_pca_experiment}.
% We only consider feasible problems.
% Although the problem formulation we consider is the homogeneous self-dual embedding, we only consider feasible problems due to the scaling of the variables by the factor $\tau$.

\begin{table}[!h]
  \small
  \centering
\renewcommand*{\arraystretch}{1.0}
\caption{Sizes of conic problems from Table~\ref{table:fp_algorithms} that we use SCS to solve.
% in the form of problem~\eqref{prob:primal_dual_prob}.
We give the number of primal constraints ($m$), size of the primal variable ($n$), and the parameter size, $d$.
Then, we provide the sizes of the cones for each conic program.
% For the second-order cone column, we provide an array that gives the length of each second-order cone.
% We do the same thing for the positive semidefinite cone.
For the second-order and the positive semidefinite cones, we supply arrays specifying the lengths of each respective cone.
The notation $100 \times [3]$ means that there are $100$ second-order cones each of size $3$.
% For the second-order cone, we provide an array of length $q_{\rm{size}}$ where each entry, $q_i$ for $i=1, \dots, q_{\rm{size}}$, gives the length of the $i$-th second-order cone.
% For example, the notation $100 \times [3]$ means that we have $100$ second-order cones each of size $3$.
% For the positive semidefinite cone, we provide an array of length $s_{\rm{size}}$ where each entry, $s_i$ for $i=1, \dots, s_{\rm{size}}$, gives the number of rows of the $i$-th positive semidefinite cone.
}
\label{table:prob_sizes}
\begin{tabular}{lllll}
\toprule
~ & Kalman filter & robust least squares & phase retrieval & sparse PCA\\
\midrule
\csvreader[head to column names, /csv/separator=semicolon, late after line=\\, before line=\ifthenelse{\equal{\theflag}{X}}{\vspace{-4mm} \\ \hline \vspace{0mm}}{},]{data/prob_sizes_scs.csv}{
dim=\colA,
kalman_filter=\colB,
robust_ls=\colC,
phase_retrieval=\colD,
sparse_pca=\colE
}{\colA & \colB & \colC & \colD & \colE}
\bottomrule
\end{tabular}
\end{table}

% \begin{table}
%   \centering
% \renewcommand*{\arraystretch}{1.0}
% \caption{Description of the cones of the problems in the form of problem~\eqref{prob:primal_dual_prob}.
% For the second-order cone, we provide an array of length $q_{\rm{size}}$ where each entry, $q_i$ for $i=1, \dots, q_{\rm{size}}$, gives the length of the $i$-th second-order cone.
% For the robust Kalman filtering example, the notation $100 \times [3]$ means that we have $100$ second-order cones each of size $3$.
% For the positive semidefinite cone, we provide an array of length $s_{\rm{size}}$ where each entry, $s_i$ for $i=1, \dots, s_{\rm{size}}$, gives the number of rows (or columns) of the $i$-th positive semidefinite cone.
% }
% \label{table:cone_sizes}
% \begin{tabular}{lllll}
% \toprule
% ~ & Kalman filter & least squares & phase retrieval & sparse PCA\\
% \midrule
% \csvreader[head to column names, /csv/separator=semicolon, late after line=\\]{data/cone_sizes.csv}{
% dim=\colA,
% kalman_filter=\colB,
% robust_ls=\colC,
% phase_retrieval=\colD,
% sparse_pca=\colE
% }{\colA & \colB & \colC & \colD & \colE}
% \bottomrule
% \end{tabular}
% \end{table}

\subsubsection{Robust Kalman filtering}\label{sec:rkf_experiment}
Kalman filtering \citep{kalman_filter} is a widely used technique for predicting system states in the presence of noise in dynamic systems.
In our first SOCP example, we use robust Kalman filtering \citep{rkf} which mitigates the impact of outliers on the filtering process and model misspecifications to track a moving vehicle from noisy data location as in~\citet{neural_fp_accel_amos}.
The dynamical system is modeled by
% We use a similar parametric setup as the first example in \citep{neural_fp_accel_amos} with a linear dynamical system modeled by
\begin{equation}\label{eq:rkf_dynamics_eqn}
    x_{t+1} = Ax_t + Bw_t, \quad y_t = C x_t + v_t,\quad \text{for}\quad t=0,1,\dots,
\end{equation}
% $x_{t+1} = Ax_t + Bw_t$ and $y_t = C x_t + v_t$
where $x_t \in \reals^{n_x}$ is the state, $y_t \in \reals^{n_o}$ is the observation, $w_t \in \reals^{n_u}$ is the input, and $v_t \in \reals^{n_o}$ is a perturbation to the observation.
The matrices $A \in \reals^{n_x \times n_x}$, $B \in \reals^{n_x \times n_u}$, and~$C \in \reals^{n_o \times n_x}$ give the dynamics of the system.
Our goal is to recover the state $x_t$ from the noisy measurements $y_t$.
% , where the state is subject to noisy inputs, $w_t$, and the observations are corrupted with noise, $v_t$.
% We do this by solving the robust Kalman filtering problem,
To do so, we solve the problem
\begin{equation*}
\begin{array}{ll}
\label{prob:rkf}
\mbox{minimize} & \sum_{t=1}^{T-1} \|w_t\|_2^2 + \mu \psi_{\rho} (v_t) \\
\mbox{subject to} & x_{t+1} = A x_t + B w_t \quad t=0, \dots, T-1 \\
& y_t = C x_t + v_t \quad t=0, \dots, T-1.\\
\end{array}
\end{equation*}
Here, the Huber penalty function~\citep{huber} parametrized by $\rho \in \reals_{++}$ that robustifies against outliers is
% \bnote{Mention this is the Huber loss (with citation)}
\[\psi_{\rho}(a) = \begin{cases}
      \|a\|_2 & \|a\|_2 \leq \rho \\
      2 \rho \|a\|_2 - \rho^2 & \|a\|_2 \geq \rho \\
   \end{cases},
\]
and $\mu \in \reals_{++}$ weights this penalty term.
% The goal is to recover the state, $x_t \in \reals^n_x$ for all $t=0, \dots, T-1$, from the observed trajectory $\{y_t\}_{t=0}^{T-1}$ where the states are subject to disturbances $\{w_t\}_{t=0}^{T-1}$.
The decision variables are the $x_t$'s, $w_t$'s, and $v_t$'s.
The parameters are the observed $y_t$'s, \ie, $\theta = (y_0, \dots, y_{T-1})$.
In this example, we take advantage of rotational invariance of the problem.
We rotate the noisy trajectory so that $y_T$ is on the x-axis for every problem.
After solving the transformed problem (for any initialization) we reverse the rotation to obtain the solution of the original problem.
% In our problem $x_t$ represents the position and velocity at time $t$, and $y_t$ represents only the position at time $t$.
% The state size, disturbance size, is $n_x$
% We let $n_x$ be the size of the state, \ie, $x_t \in \reals^{n_x}$ and $n_u$ be the size of the disturbance to the state, \ie, $w_t \in \reals^{n_u}$.
% The horizon length is $T$.
% Problem~\eqref{prob:rkf} is guaranteed to be feasible; one could pick $x_t = 0$ and $w_t = 0$ for all $t$.
% Then setting $v_t = y_t$ satisfies feasibility.
% Matrices $A \in \reals^{n_x \times n_x}$ and $B\in\reals^{n_x \times n_u}$ define the system dynamics.

\paragraph{Numerical example.}
As in \citet{neural_fp_accel_amos}, we set $n_x=4$, $n_o=2$, $n_u=2$, $\mu=2$, $\rho=2$, and $T=50$.
The dynamics matrices are
% \vspace{4mm}\\
% \vspace{4mm}
% \resizebox{\textwidth}{!}{%
\begin{equation*}
\small
    \label{eq:rkf_dynamics}
    A = \begin{bmatrix}
        1 & 0 & (1 - (\gamma/2)\Delta t) \Delta t & 0\\
        0 & 1 & 0 & (1 - (\gamma/2)\Delta t) \Delta t\\
        0 & 0 & 1 - \gamma \Delta t & 0\\
        0 & 0 & 0 & 1 - \gamma \Delta t
    \end{bmatrix},\hspace{0mm}
    B = \begin{bmatrix}
        1 / 2\Delta t^2 & 0 \\
        0 & 1 / 2\Delta t^2\\
        \Delta t & 0 \\
        0 & \Delta t
    \end{bmatrix}, \hspace{0mm}
    C = \begin{bmatrix}
        1 & 0 & 0 & 0\\
        0 & 1 & 0 & 0\\
    \end{bmatrix},
\end{equation*}
% }
where $\Delta t=0.5$ and $\gamma=0.05$ are fixed to be respectively the sampling time and the velocity dampening parameter.
% Here we set them to be $\Delta t = 0.5$ and $\gamma = 0.05$.
We generate the problem instances in the following way.
We generate true trajectories $\{x_0^*, \dots, x_{T-1}^*\}$ of the vehicle by first letting $x_0^* = 0$.
Then we sample the inputs as $w_t \sim \mathcal{N}(0, 0.01)$ and $v_t \sim \mathcal{N}(0, 0.01)$.
The trajectories are then fully defined via the dynamics equations in \Eqn~\eqref{eq:rkf_dynamics_eqn} with the sampled $w_t$'s and $v_t$'s.
% Then the trajectories observed directly follow the dynamics equations from equations~\eqref{eq:rkf_dynamics} with the sampled $w_t$'s and $v_t$'s.
% We pick $\sigma_w = 0.1$ and $\sigma_v = 0.1$.
% We pick $\mu=2$ and $\rho=2$ to describe the penalty function of the noisy observations.
% We pick $\mu=2$, and $\rho=2$.
% We pick $\sigma_w = 0.1$, $\sigma_v = 0.1$, $\mu=2$, and $\rho=2$.
% We pick $\mu=2$ and $\rho=2$ to describe the penalty function of the noisy observations.

\paragraph{Results.}

\begin{table}[!h]
  \centering
    \renewcommand*{\arraystretch}{1.0}
\small
  \caption{Robust Kalman filtering.
  }
  \label{tab:rkf_table}
  \vspace*{-3mm}
  \begin{tabular}{l}
    \iters
  \end{tabular}
  \adjustbox{max width=\textwidth}{
    \begin{tabular}{lllllllllllll}
    Fp res.&
    \begin{tabular}{@{}c@{}}Cold \\ Start\end{tabular}
    &
    \begin{tabular}{@{}c@{}}Nearest \\ Neighbor\end{tabular}
    &
    \begin{tabular}{@{}c@{}}Fp \\$k=0$ \end{tabular}
    &
    \begin{tabular}{@{}c@{}}Fp \\$k=5$ \end{tabular}
    &
    \begin{tabular}{@{}c@{}}Fp \\$k=15$ \end{tabular}
    &
    \begin{tabular}{@{}c@{}}Fp \\$k=30$ \end{tabular}
    &
    \begin{tabular}{@{}c@{}}Fp \\$k=60$ \end{tabular}
    &
    \begin{tabular}{@{}c@{}}Reg \\$k=0$ \end{tabular}
    &
    \begin{tabular}{@{}c@{}}Reg \\$k=5$ \end{tabular}
    &
    \begin{tabular}{@{}c@{}}Reg \\$k=15$ \end{tabular}
    &
    \begin{tabular}{@{}c@{}}Reg \\$k=30$ \end{tabular}
    &
    \begin{tabular}{@{}c@{}}Reg \\$k=60$ \end{tabular} \\
    % & $k=0$ & $k=5$ & $k=15$ & $k=30$ \\
    \midrule
    % Add directly from csv reader
    % the csv file has names colnameA, colnameB
    \csvreader[head to column names, late after line=\\]{./data/iters_red/robust_kalman.csv}{
    accuracies=\colA,
    cold_start_iters=\colC,
    nearest_neighbor_iters=\colD,
    obj_k0_iters=\colE,
    obj_k5_iters=\colF,
    obj_k15_iters=\colG,
    obj_k30_iters=\colH,
    obj_k60_iters=\colI,
    reg_k0_iters=\colJ,
    reg_k5_iters=\colK,
    reg_k15_iters=\colL,
    reg_k30_iters=\colM,
    reg_k60_iters=\colN,
    }{\colA & \colC & \colD & \colE & \colF & \colG & \colH & \colI & \colJ & \colK & \colL & \colM & \colN}
    \bottomrule
  \end{tabular}}
  \begin{tabular}{l}
    \reduction
  \end{tabular}
  \adjustbox{max width=\textwidth}{
    \begin{tabular}{lllllllllllll}
    Fp res.&
    \begin{tabular}{@{}c@{}}Cold \\ Start\end{tabular}
    &
    \begin{tabular}{@{}c@{}}Nearest \\ Neighbor\end{tabular}
    &
    \begin{tabular}{@{}c@{}}Fp \\$k=0$ \end{tabular}
    &
    \begin{tabular}{@{}c@{}}Fp \\$k=5$ \end{tabular}
    &
    \begin{tabular}{@{}c@{}}Fp \\$k=15$ \end{tabular}
    &
    \begin{tabular}{@{}c@{}}Fp \\$k=30$ \end{tabular}
    &
    \begin{tabular}{@{}c@{}}Fp \\$k=60$ \end{tabular}
    &
    \begin{tabular}{@{}c@{}}Reg \\$k=0$ \end{tabular}
    &
    \begin{tabular}{@{}c@{}}Reg \\$k=5$ \end{tabular}
    &
    \begin{tabular}{@{}c@{}}Reg \\$k=15$ \end{tabular}
    &
    \begin{tabular}{@{}c@{}}Reg \\$k=30$ \end{tabular}
    &
    \begin{tabular}{@{}c@{}}Reg \\$k=60$ \end{tabular} \\
    % & $k=0$ & $k=5$ & $k=15$ & $k=30$ \\
    \midrule
    % Add directly from csv reader
    % the csv file has names colnameA, colnameB
    \csvreader[head to column names, late after line=\\]{./data/iters_red/robust_kalman.csv}{
    accuracies=\colA,
    cold_start_red=\colC,
    nearest_neighbor_red=\colD,
    obj_k0_red=\colE,
    obj_k5_red=\colF,
    obj_k15_red=\colG,
    obj_k30_red=\colH,
    obj_k60_red=\colI,
    reg_k0_red=\colJ,
    reg_k5_red=\colK,
    reg_k15_red=\colL,
    reg_k30_red=\colM,
    reg_k60_red=\colN,
    }{\colA & \colC & \colD & \colE & \colF & \colG & \colH & \colI & \colJ & \colK & \colL & \colM & \colN}
    \bottomrule
  \end{tabular}}
  % \footnotesize
  \begin{tabular}{l}
  \scstiming
  \end{tabular}
  \adjustbox{max width=\textwidth}{
    \begin{tabular}{lllllllllllll}
    tol.&
    \begin{tabular}{@{}c@{}}Cold \\ Start\end{tabular}
    &
    \begin{tabular}{@{}c@{}}Nearest \\ Neighbor\end{tabular}
    &
    \begin{tabular}{@{}c@{}}Fp \\$k=0$ \end{tabular}
    &
    \begin{tabular}{@{}c@{}}Fp \\$k=5$ \end{tabular}
    &
    \begin{tabular}{@{}c@{}}Fp \\$k=15$ \end{tabular}
    &
    \begin{tabular}{@{}c@{}}Fp \\$k=30$ \end{tabular}
    &
    \begin{tabular}{@{}c@{}}Fp \\$k=60$ \end{tabular}
    &
    \begin{tabular}{@{}c@{}}Reg \\$k=0$ \end{tabular}
    &
    \begin{tabular}{@{}c@{}}Reg \\$k=5$ \end{tabular}
    &
    \begin{tabular}{@{}c@{}}Reg \\$k=15$ \end{tabular}
    &
    \begin{tabular}{@{}c@{}}Reg \\$k=30$ \end{tabular}
    &
    \begin{tabular}{@{}c@{}}Reg \\$k=60$ \end{tabular} \\
    % & $k=0$ & $k=5$ & $k=15$ & $k=30$ \\
    \midrule
    % Add directly from csv reader
    % the csv file has names colnameA, colnameB
    \csvreader[head to column names, late after line=\\]{./data/timings/robust_kalman.csv}{
    rel_tols=\colA,
    cold_start=\colC,
    nearest_neighbor=\colD,
    obj_k0=\colE,
    obj_k5=\colF,
    obj_k15=\colG,
    obj_k30=\colH,
    obj_k60=\colI,
    reg_k0=\colJ,
    reg_k5=\colK,
    reg_k15=\colL,
    reg_k30=\colM,
    reg_k60=\colN,
    }{\colA & \colC & \colD & \colE & \colF & \colG & \colH & \colI & \colJ & \colK & \colL & \colM & \colN}
    \bottomrule
  \end{tabular}}
\end{table}

\begin{figure}[!h]
  \centering
    \includegraphics[width=\figsize\linewidth]{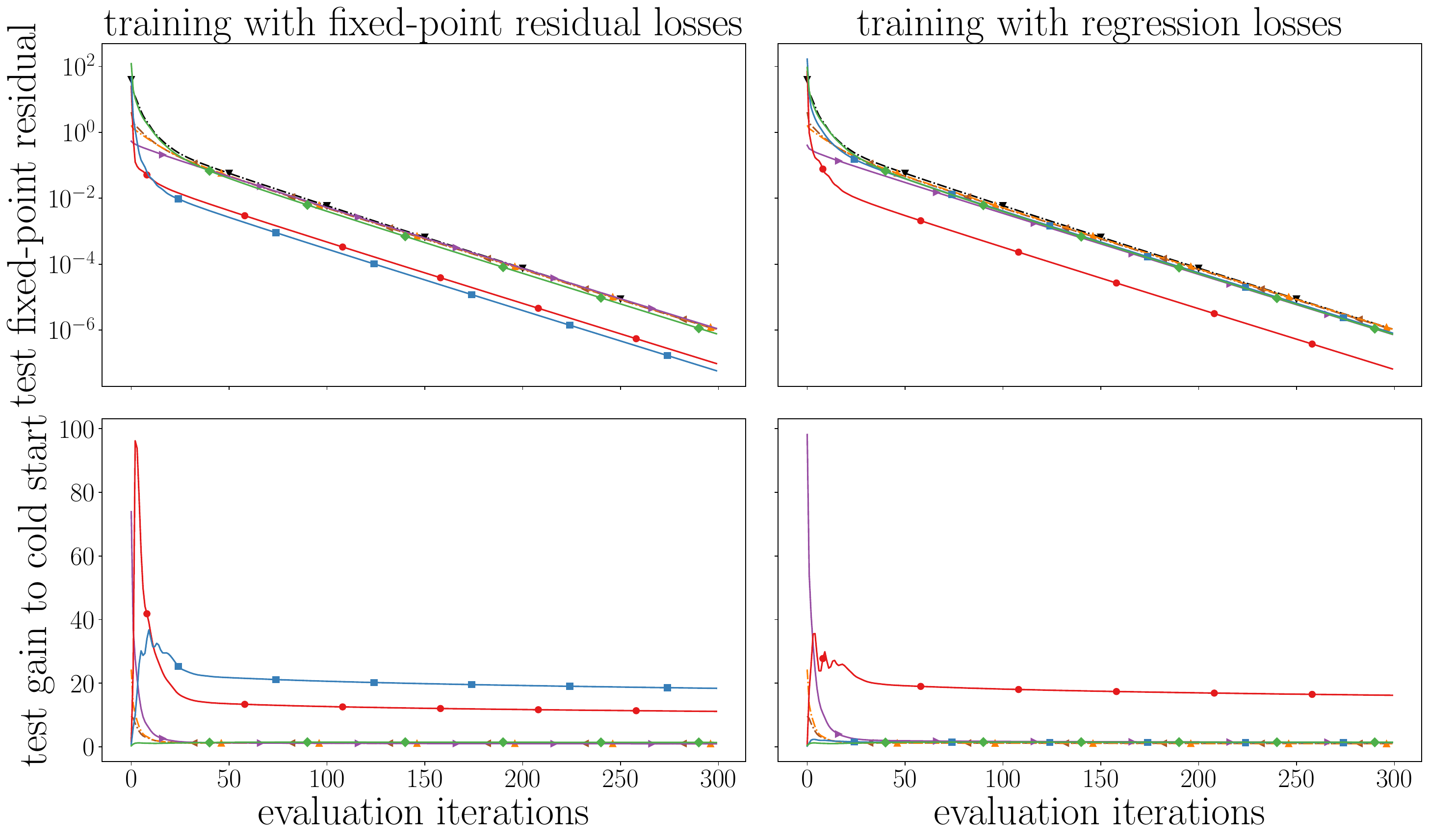}
    \\
    \legendprevsol\\
    % \linedowntri{0}{0}{0} cold start \hspace{1mm}
    % \linerighttri{152}{78}{163} nearest neighbor warm start \hspace{1mm}
    % \lineuptri{255}{127}{0} previous solution warm start \hspace{1mm}
    % \\learned warm-start $k=$\{\hspace{-1mm}
    %     \linelefttri{166}{86}{40} $0$
    %     \linecircle{228}{26}{28} $5$
    %     \linesquare{55}{126}{184} $15$
    %     \linediamond{77}{175}{74} $60$
    %     \}\hspace*{-2mm}\\
    % The dash-dotted lines train with fixed-point residual loss and the solid lines train with the regression loss.
    \caption{Robust Kalman filtering.
        The learned warm starts that train with $k=5$ for both losses and with $k=15$ for the fixed-point residual loss have significant gains over the baselines. 
        % Here, the $k=5$ models and $k=15$ model for the fixed-point residual loss improvement significantly from all other initializations.
        % The gains in these cases are nearly constant for future steps due in part to the linear convergence.
    }
    \label{fig:robust_kalman_results}
\end{figure}

% \begin{figure}
%   \centering
%     \includegraphics[width=0.7\linewidth]{}
%     \label{fig:rkf_eval_iters}
%     \caption{Robust Kalman filtering}
% \end{figure}

\begin{figure}
  \centering
  \begin{subfigure}{.37\linewidth}
    \centering
    \includegraphics[width=1\linewidth]{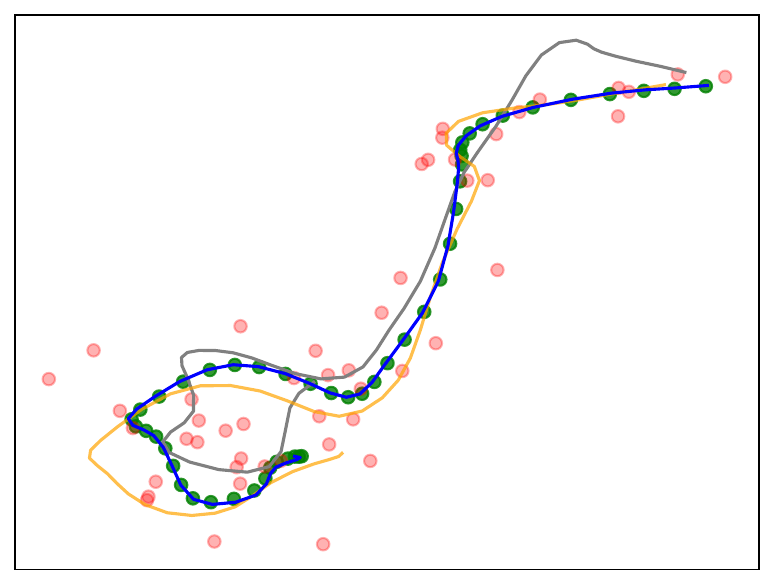}
    \label{fig:rkf_visualize1}
  \end{subfigure}%
  \begin{subfigure}{.37\linewidth}
    \centering
    \includegraphics[width=1\linewidth]{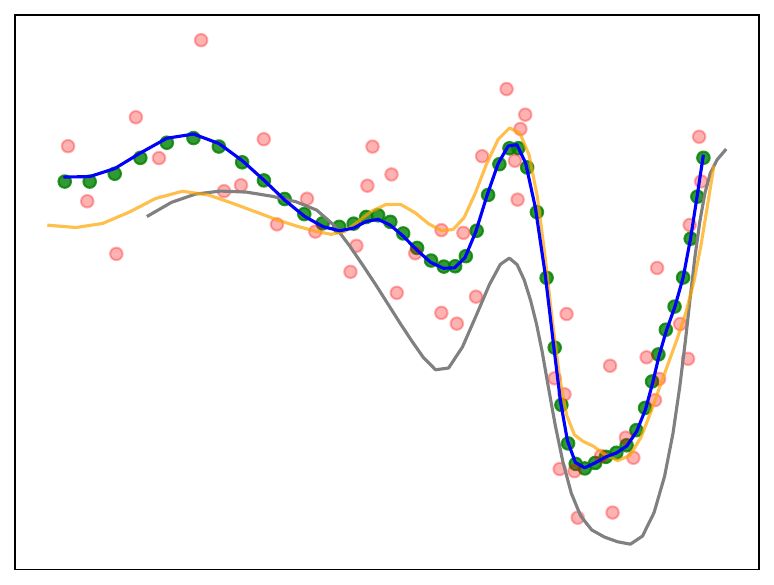}
    \label{fig:rkf_visualize2}
  \end{subfigure}\\
  \ccircle{44}{160}{44} optimal solution \hspace{1mm}
  \ccircle{245}{125}{125} noisy trajectory \hspace{1mm}
  \\
  \cblock{245}{201}{44} nearest neighbor \hspace{1mm}
  \cblock{138}{135}{129} previous solution \hspace{1mm}
  \cblock{0}{0}{255} learned \hspace{1mm}
  \caption{Visualizing test problems for robust Kalman filtering.
  Each plot is a separate test problem.
  The noisy, observed trajectory are the red points which serve as problem data for the SOCP.
  The robust Kalman filtering recovery, the optimal solution of the SOCP, is shown as green dots.
  After $5$ iterations, SCS with our learned warm start using the regression loss with $k=5$ is very close to the optimal solution while SCS initialized with both the shifted previous solution and the nearest neighbor still is noticeably far away from optimality.
  }
  \label{fig:rkf_visualize}
\end{figure}
Since this is a control example, we use the shifted previous solution as a warm start from \Sec~\ref{sec:quadcopter}.
Figure~\ref{fig:robust_kalman_results} and Table~\ref{tab:rkf_table} show the convergence behavior of our method.
In this example, the learned warm starts do well with the fixed-point residual loss for $k=5$ and $k=15$ and the regression loss for $k=5$, but hardly improve in the other cases.
In all cases, the gains relative to the cold start remain nearly constant throughout the evaluation iterations.
Figure~\ref{fig:rkf_visualize} illustrates how our learned solutions after $5$ iterations outperforms the solution returned after $5$ iterations from the baselines.

\subsubsection{Robust non-negative least squares}\label{sec:rls_experiment}
We consider the problem of non-negative least squares with matrix uncertainty
\begin{equation*}
    \begin{array}{ll}
    \label{eq:robust_ls_minmax}
    \mbox{min}_{x \geq 0} & \mbox{max}_{\|\Delta A\| \leq \rho} \|(\hat{A} + \Delta A)x - b\|_2, \\
    % \mbox{subject to} & x \geq 0,
    \end{array}
\end{equation*}
where the right-hand side vector $b \in \reals^m$, nominal matrix $\hat{A} \in \reals^{m \times n}$, and maximum perturbation $\rho \in \reals_{++}$ are the problem data.
The decision variable of the minimizer and maximizer are $x \in \reals^n$ and $\Delta A \in \reals^{m \times n}$ respectively.
Here, $\|\Delta A\|$ denotes the largest singular value of the perturbation matrix $\Delta A$.
\citet{ElGhaoui_robust_ls} provide an SOCP formulation for this problem
% This problem can be formulated as the SOCP,
\begin{equation*}%\label{eq:robust_ls}
    \begin{array}{ll}
    \mbox{minimize} & u + \rho v \\
    \mbox{subject to} & \|\hat{A} x - b\|_2 \leq u \\
    & \|x\|_2 \leq v\\
    & x \geq 0,
    \end{array}
\end{equation*}
where $x \in \reals^n$, $u \in \reals$, and $v \in \reals$ are the decision variables.
% , and the problem data remains the same.
% and $A \in \reals^{m \times n}, b \in \reals^m, \rho \in \reals_+$ are the problem data \citep{ElGhaoui_robust_ls}.
% In the family of problems we consider, we let $\theta=b$, so our nominal matrix $\hat{A}$ and perturbation size $\rho$ remain fixed from problem to problem.
The parameter is $\theta = b$.
% Problem~\eqref{eq:robust_ls} is guaranteed to be feasible: to see this, consider any non-negative $x$ and set $u = \|Ax-b\|_2$ and $v = \|x\|_2$.

\paragraph{Numerical example.}
We pick $\rho=4$ and $\hat{A} \in \reals^{500 \times 800}$ where the entries of $\hat{A}$ are sampled the uniform distribution $\mathcal{U}[-1,1]$.
We sample $b$ in an i.i.d. fashion from $\mathcal{U}[1,2]$.

\paragraph{Results.}
Figure~\ref{fig:robust_ls_results} and table~\ref{tab:rls_times} show the convergence behavior of our method.
The learned warm starts with positive $k$ substantially improve upon the baselines for both losses.
% In this example, the regression losses outperform the fixed-point residual losses with both methods performing relatively poorly for $k=0$.
Figure~\ref{fig:robust_ls_results} show linear convergence of our method; this results in the gains from the learned warm starts staying roughly constant as the number of evaluation steps increases.
% \begin{figure}
%   \centering
%     \centering
%     \includegraphics[width=0.7\linewidth]{}
%     \label{fig:rls_eval_iters}
%     \caption{Robust least squares.
%   }
% \end{figure}

% \blue{ratio}
\begin{figure}[!h]
  \centering
    \includegraphics[width=\figsize\linewidth]{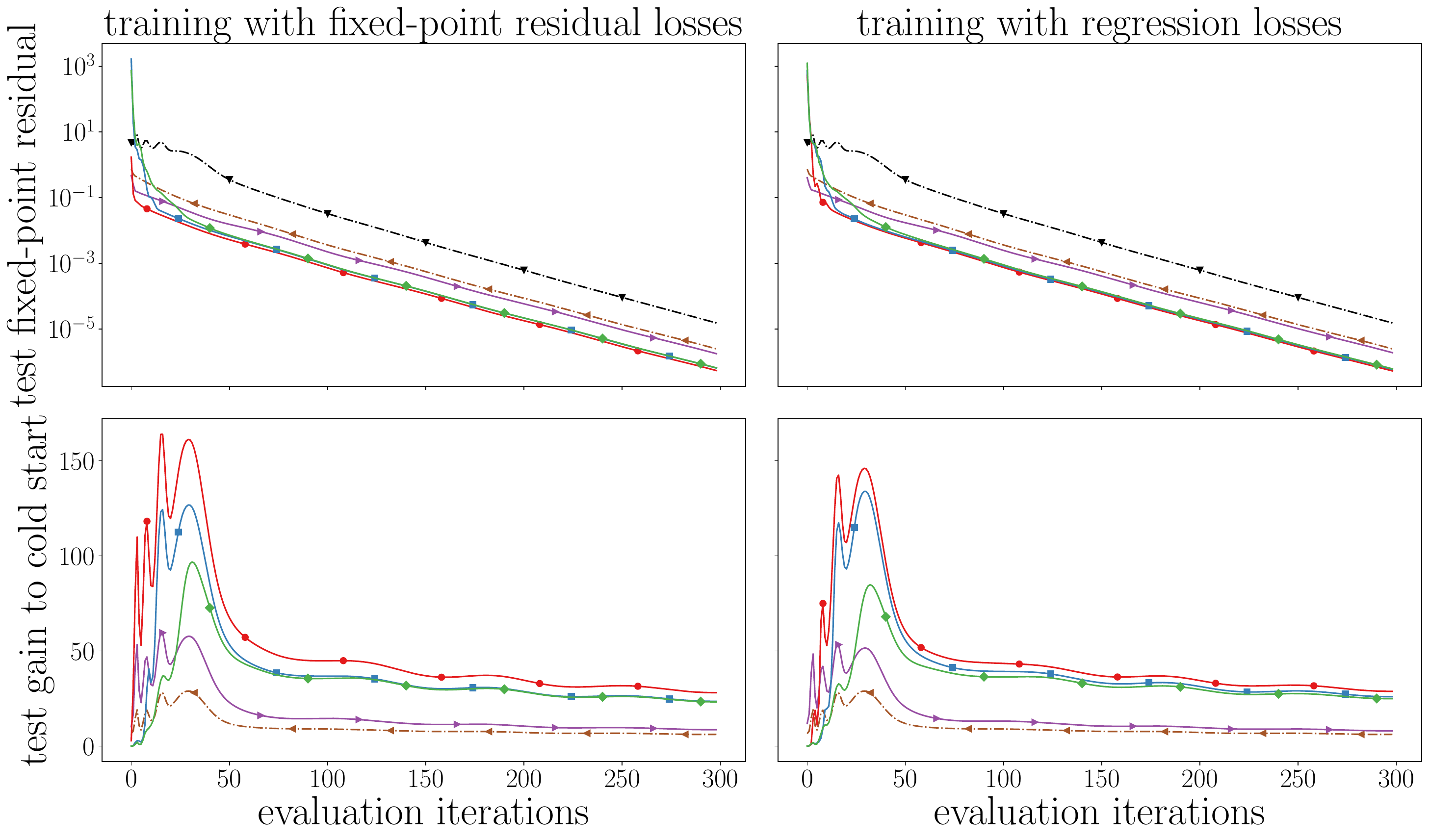}
    \\
    \legend
  % \linedowntri{0}{0}{0} cold start \hspace{1mm}
  % \linerighttri{152}{78}{163} nearest neighbor warm start \hspace{1mm}
  % \\learned warm-start $k=$\{\hspace{-1mm}
  %     \linelefttri{166}{86}{40} $0$
  %     \linecircle{228}{26}{28} $5$
  %     \linesquare{55}{126}{184} $15$
  %     \linediamond{77}{175}{74} $60$
  %     \}\hspace*{-2mm}\\
    \caption{Robust non-negative least squares.
    All of the learned warm starts apart from the ones with $k=0$ substantially improve the gain over the cold start.
    % Due to the linear convergence, the gain is roughly maintained as the evaluation iterations increases.
      % All of the learned warm starts provide large improvements over the cold start and nearest neighbor initializations except for the ones learned with $k=0$.
      % Due to the linear convergence, the gains are maintained for future iterations.
    }
    \label{fig:robust_ls_results}
\end{figure}

\begin{table}[!h]
  \centering
  \small
    \renewcommand*{\arraystretch}{1.0}
  \caption{Robust non-negative least squares.
  % We compare the two baselines, the cold-start and nearest neighbor initializations, against our learned warm-starts with values $k=0,5,15,30,60$ for both the fixed-point residual and regression losses.
  }
  \label{tab:rls_times}
  \vspace*{-3mm}
  \begin{tabular}{l}
    \iters
  \end{tabular}
  \adjustbox{max width=\textwidth}{
    \begin{tabular}{lllllllllllll}
    Fp res.&
    \begin{tabular}{@{}c@{}}Cold \\ Start\end{tabular}
    &
    \begin{tabular}{@{}c@{}}Nearest \\ Neighbor\end{tabular}
    &
    \begin{tabular}{@{}c@{}}Fp \\$k=0$ \end{tabular}
    &
    \begin{tabular}{@{}c@{}}Fp \\$k=5$ \end{tabular}
    &
    \begin{tabular}{@{}c@{}}Fp \\$k=15$ \end{tabular}
    &
    \begin{tabular}{@{}c@{}}Fp \\$k=30$ \end{tabular}
    &
    \begin{tabular}{@{}c@{}}Fp \\$k=60$ \end{tabular}
    &
    \begin{tabular}{@{}c@{}}Reg \\$k=0$ \end{tabular}
    &
    \begin{tabular}{@{}c@{}}Reg \\$k=5$ \end{tabular}
    &
    \begin{tabular}{@{}c@{}}Reg \\$k=15$ \end{tabular}
    &
    \begin{tabular}{@{}c@{}}Reg \\$k=30$ \end{tabular}
    &
    \begin{tabular}{@{}c@{}}Reg \\$k=60$ \end{tabular} \\
    % & $k=0$ & $k=5$ & $k=15$ & $k=30$ \\
    \midrule
    % Add directly from csv reader
    % the csv file has names colnameA, colnameB
    \csvreader[head to column names, late after line=\\]{./data/iters_red/robust_ls.csv}{
    accuracies=\colA,
    cold_start_iters=\colC,
    nearest_neighbor_iters=\colD,
    obj_k0_iters=\colE,
    obj_k5_iters=\colF,
    obj_k15_iters=\colG,
    obj_k30_iters=\colH,
    obj_k60_iters=\colI,
    reg_k0_iters=\colJ,
    reg_k5_iters=\colK,
    reg_k15_iters=\colL,
    reg_k30_iters=\colM,
    reg_k60_iters=\colN,
    }{\colA & \colC & \colD & \colE & \colF & \colG & \colH & \colI & \colJ & \colK & \colL & \colM & \colN}
    \bottomrule
  \end{tabular}}
  \begin{tabular}{l}
    \reduction
  \end{tabular}
  \adjustbox{max width=\textwidth}{
    \begin{tabular}{lllllllllllll}
    Fp res.&
    \begin{tabular}{@{}c@{}}Cold \\ Start\end{tabular}
    &
    \begin{tabular}{@{}c@{}}Nearest \\ Neighbor\end{tabular}
    &
    \begin{tabular}{@{}c@{}}Fp \\$k=0$ \end{tabular}
    &
    \begin{tabular}{@{}c@{}}Fp \\$k=5$ \end{tabular}
    &
    \begin{tabular}{@{}c@{}}Fp \\$k=15$ \end{tabular}
    &
    \begin{tabular}{@{}c@{}}Fp \\$k=30$ \end{tabular}
    &
    \begin{tabular}{@{}c@{}}Fp \\$k=60$ \end{tabular}
    &
    \begin{tabular}{@{}c@{}}Reg \\$k=0$ \end{tabular}
    &
    \begin{tabular}{@{}c@{}}Reg \\$k=5$ \end{tabular}
    &
    \begin{tabular}{@{}c@{}}Reg \\$k=15$ \end{tabular}
    &
    \begin{tabular}{@{}c@{}}Reg \\$k=30$ \end{tabular}
    &
    \begin{tabular}{@{}c@{}}Reg \\$k=60$ \end{tabular} \\
    % & $k=0$ & $k=5$ & $k=15$ & $k=30$ \\
    \midrule
    % Add directly from csv reader
    % the csv file has names colnameA, colnameB
    \csvreader[head to column names, late after line=\\]{./data/iters_red/robust_ls.csv}{
    accuracies=\colA,
    cold_start_red=\colC,
    nearest_neighbor_red=\colD,
    obj_k0_red=\colE,
    obj_k5_red=\colF,
    obj_k15_red=\colG,
    obj_k30_red=\colH,
    obj_k60_red=\colI,
    reg_k0_red=\colJ,
    reg_k5_red=\colK,
    reg_k15_red=\colL,
    reg_k30_red=\colM,
    reg_k60_red=\colN,
    }{\colA & \colC & \colD & \colE & \colF & \colG & \colH & \colI & \colJ & \colK & \colL & \colM & \colN}
    \bottomrule
  \end{tabular}}
  % \footnotesize
  \begin{tabular}{l}
  \scstiming
  \end{tabular}
  \adjustbox{max width=\textwidth}{
    \begin{tabular}{lllllllllllll}
    tol.&
    \begin{tabular}{@{}c@{}}Cold \\ Start\end{tabular}
    &
    \begin{tabular}{@{}c@{}}Nearest \\ Neighbor\end{tabular}
    &
    \begin{tabular}{@{}c@{}}Fp \\$k=0$ \end{tabular}
    &
    \begin{tabular}{@{}c@{}}Fp \\$k=5$ \end{tabular}
    &
    \begin{tabular}{@{}c@{}}Fp \\$k=15$ \end{tabular}
    &
    \begin{tabular}{@{}c@{}}Fp \\$k=30$ \end{tabular}
    &
    \begin{tabular}{@{}c@{}}Fp \\$k=60$ \end{tabular}
    &
    \begin{tabular}{@{}c@{}}Reg \\$k=0$ \end{tabular}
    &
    \begin{tabular}{@{}c@{}}Reg \\$k=5$ \end{tabular}
    &
    \begin{tabular}{@{}c@{}}Reg \\$k=15$ \end{tabular}
    &
    \begin{tabular}{@{}c@{}}Reg \\$k=30$ \end{tabular}
    &
    \begin{tabular}{@{}c@{}}Reg \\$k=60$ \end{tabular} \\
    % & $k=0$ & $k=5$ & $k=15$ & $k=30$ \\
    \midrule
    % Add directly from csv reader
    % the csv file has names colnameA, colnameB
    \csvreader[head to column names, late after line=\\]{./data/timings/robust_ls.csv}{
    rel_tols=\colA,
    cold_start=\colC,
    nearest_neighbor=\colD,
    obj_k0=\colE,
    obj_k5=\colF,
    obj_k15=\colG,
    obj_k30=\colH,
    obj_k60=\colI,
    reg_k0=\colJ,
    reg_k5=\colK,
    reg_k15=\colL,
    reg_k30=\colM,
    reg_k60=\colN,
    }{\colA & \colC & \colD & \colE & \colF & \colG & \colH & \colI & \colJ & \colK & \colL & \colM & \colN}
    \bottomrule
  \end{tabular}}
\end{table}

% \subsection{Robust PCA}
% We consider the problem of robust PCA \citep{robust_pca} that recovers a
% low-rank matrix of measurements that have been corrupted by sparse noise by solving
% \begin{equation*}
% \begin{array}{ll}
% \mbox{minimize} & \|L\|_{*} \\
%  \mbox{subject to} & L + S = M \\
% & \|S\|_1 \leq \mu, \\
% \end{array}
% \end{equation*}
% where $L \in \mathbf{R}^{p \times q}$ is the desired low-rank matrix, $S \in \mathbf{R}^{p \times q}$ is the sparse corruption, and $M \in \mathbf{R}^{p \times q}$ is the matrix of measurements.

\subsubsection{Phase retrieval}\label{sec:phase_retrieval_experiment}
Our first SDP example is the problem of phase retrieval~\citep{phase_retrieval} where the goal is to recover an unknown signal $x \in \complex^n$ from observations.
This problem has applications in X-ray crystallogpraphy \citep{phase_retrieval_crystallography} and coherent diffractive imaging \citep{phase_retrieval_imaging}.
Specifically, for known vectors $a_i \in \complex^n$, we have $m$ scalar measurements: $b_i = \lvert \langle a_i, x \rangle \rvert^2, \quad i = 1,\dots, m$.
% \begin{equation}\label{eqn:phase_bvals}
%   b_i = \lvert \langle a_i, x \rangle \rvert^2, \quad i = 1,\dots, d.
% \end{equation}
% \bnote{Let's use $A^*$ for the conjugate instead of $\star$ which is often assigned to optimal solutions.}
Since the values are complex, we denote the conjugate transpose of a matrix $A$ by $A^*$.
Noting that $\lvert \langle a_i, x \rangle \rvert^2 = (a_i^* x)(x^* a_i),$  we introduce a matrix variable $X\in\symm^n_{+}$ and matrices $A_i = a_i a_i^*$.
% replace the inner produce constraints by
% \begin{equation*}
%   b_i = \Tr(A_i X), \quad A_i = a_i a_i^*, \quad i = 1,\dots,d.
% \end{equation*}
The exact phase retrieval problem becomes a feasibility problem over the matrix variable with a rank constraint
\begin{equation*}%\label{prob:exact_phase}
  \begin{array}{ll}
  \mbox{find} & X \\
  \mbox{subject to} & \Tr(A_i X) = b_i, \quad i=1,\dots,m \\
  & \Rank(X) = 1,\quad X \succeq 0.
  \end{array}
\end{equation*}
We arrive at the following SDP relaxation by dropping the rank constraint:
\begin{equation*}%\label{prob:phase_SDP}
  \begin{array}{ll}
  \mbox{minimize} & \Tr(X) \\
  \mbox{subject to} & \Tr(A_i X) = b_i, \quad i=1,\dots,m \\
  & X \succeq 0.
  \end{array}
\end{equation*}
To parameterize each problem, we let $\theta = b \in \reals^m$.
% To parameterize each problem, we stack the $b_i$ measurements in a vector $b \in \reals^d$ and let~$\theta = b$.
% we sample a signal $x$ from some underlying distribution.
% Then, given the $a_i$ vectors, we compute the corresponding $b_i$ values according to \Eqn~\eqref{eqn:phase_bvals}, which forms the parameterization, $\theta = b$.
% Problem~\eqref{prob:phase_SDP} is always feasible since $X = x x^\star$ is a solution by construction.

\paragraph{Numerical example.}
For the signal, we sample $x$ from a complex normal distribution, {\it i.e.}, we sample the real and imaginary parts of each component independently from $\mathcal{N}(\mu, \sigma^2)$.
To construct the constraint matrices, we use the coded diffraction pattern model \citep{candes2015phase}.
The specific modulating waveforms follow the setup from \citet[\Sec~F.1]{yurtsever2021scalable}.
For a signal of size $n$, we generate $d = 3n$ measurements.
Specifically, we draw $3$ independent modulating waveforms $\psi_j \in \reals^n, ~ j = 1,\dots,3$.
Each component of $\psi_j$ is the product of two random variables, with one drawn uniformly from $\{1, i, -1, -i\}$ and the other drawn from $\{\sqrt{2}/2, \sqrt{3}\}$ with probabilities $0.8$ and $0.2$, respectively.
Then, each $a_i$ corresponds to computing a single entry of the Fourier transform of $x$ after being modulated by the waveforms.
Letting $W$ be the $n \times n$ discrete Fourier transform matrix, the $a_i$'s can be written explicitly as $a_{(j-1)n + l} = W^T_l(\diag(\psi_j))^\star$ where $W_l^T$ is the $l$-th row of $W$.
% \begin{equation*}
%   a_{(j-1)n + l} = W^T_l (\diag(\psi_j))^\star, \quad 1 \leq j \leq 3, ~ 1 \leq l \leq n,
% \end{equation*}
% where $W_l^T$ is the $l$-th row of $W$.
We take $n=40$, $\mu=5$, and $\sigma=1$.

\paragraph{Results.}
Figure~\ref{fig:phase_retrieval_results} and Table~\ref{tab:pr_table} show the convergence behavior of our method.
In this case, while the decoupled approach with $k=0$ offers the largest gains over the first few iterations, the gain degrades as $t$ increases to the point where it's performance becomes worse than that of the nearest-neighbor initialization.
The learned warm starts with the regression loss for positive $k$ tend to sustain their gains for a larger value of $t$ compared with the learned warm starts that use the fixed-point residual loss.

\begin{table}[!h]
  \centering
  \small
    \renewcommand*{\arraystretch}{1.0}
  \caption{Phase retrieval.
  % We compare the two baselines, the cold-start and nearest neighbor initializations, against our learned warm-starts with values $k=0,5,15,30,60$ for both the fixed-point residual and regression losses.
  }
  \label{tab:pr_table}
  \vspace*{-3mm}
  \begin{tabular}{l}
  \iters
  \end{tabular}
  \adjustbox{max width=\textwidth}{
    \begin{tabular}{lllllllllllll}
    Fp res.&
    \begin{tabular}{@{}c@{}}Cold \\ Start\end{tabular}
    &
    \begin{tabular}{@{}c@{}}Nearest \\ Neighbor\end{tabular}
    &
    \begin{tabular}{@{}c@{}}Fp \\$k=0$ \end{tabular}
    &
    \begin{tabular}{@{}c@{}}Fp \\$k=5$ \end{tabular}
    &
    \begin{tabular}{@{}c@{}}Fp \\$k=15$ \end{tabular}
    &
    \begin{tabular}{@{}c@{}}Fp \\$k=30$ \end{tabular}
    &
    \begin{tabular}{@{}c@{}}Fp \\$k=60$ \end{tabular}
    &
    \begin{tabular}{@{}c@{}}Reg \\$k=0$ \end{tabular}
    &
    \begin{tabular}{@{}c@{}}Reg \\$k=5$ \end{tabular}
    &
    \begin{tabular}{@{}c@{}}Reg \\$k=15$ \end{tabular}
    &
    \begin{tabular}{@{}c@{}}Reg \\$k=30$ \end{tabular}
    &
    \begin{tabular}{@{}c@{}}Reg \\$k=60$ \end{tabular} \\
    % & $k=0$ & $k=5$ & $k=15$ & $k=30$ \\
    \midrule
    % Add directly from csv reader
    % the csv file has names colnameA, colnameB
    \csvreader[head to column names, late after line=\\]{./data/iters_red/phase_retrieval.csv}{
    accuracies=\colA,
    cold_start_iters=\colC,
    nearest_neighbor_iters=\colD,
    obj_k0_iters=\colE,
    obj_k5_iters=\colF,
    obj_k15_iters=\colG,
    obj_k30_iters=\colH,
    obj_k60_iters=\colI,
    reg_k0_iters=\colJ,
    reg_k5_iters=\colK,
    reg_k15_iters=\colL,
    reg_k30_iters=\colM,
    reg_k60_iters=\colN,
    }{\colA & \colC & \colD & \colE & \colF & \colG & \colH & \colI & \colJ & \colK & \colL & \colM & \colN}
    \bottomrule
  \end{tabular}}
  \begin{tabular}{l}
    \reduction
  \end{tabular}
  \adjustbox{max width=\textwidth}{
    \begin{tabular}{lllllllllllll}
    Fp res.&
    \begin{tabular}{@{}c@{}}Cold \\ Start\end{tabular}
    &
    \begin{tabular}{@{}c@{}}Nearest \\ Neighbor\end{tabular}
    &
    \begin{tabular}{@{}c@{}}Fp \\$k=0$ \end{tabular}
    &
    \begin{tabular}{@{}c@{}}Fp \\$k=5$ \end{tabular}
    &
    \begin{tabular}{@{}c@{}}Fp \\$k=15$ \end{tabular}
    &
    \begin{tabular}{@{}c@{}}Fp \\$k=30$ \end{tabular}
    &
    \begin{tabular}{@{}c@{}}Fp \\$k=60$ \end{tabular}
    &
    \begin{tabular}{@{}c@{}}Reg \\$k=0$ \end{tabular}
    &
    \begin{tabular}{@{}c@{}}Reg \\$k=5$ \end{tabular}
    &
    \begin{tabular}{@{}c@{}}Reg \\$k=15$ \end{tabular}
    &
    \begin{tabular}{@{}c@{}}Reg \\$k=30$ \end{tabular}
    &
    \begin{tabular}{@{}c@{}}Reg \\$k=60$ \end{tabular} \\
    % & $k=0$ & $k=5$ & $k=15$ & $k=30$ \\
    \midrule
    % Add directly from csv reader
    % the csv file has names colnameA, colnameB
    \csvreader[head to column names, late after line=\\]{./data/iters_red/phase_retrieval.csv}{
    accuracies=\colA,
    cold_start_red=\colC,
    nearest_neighbor_red=\colD,
    obj_k0_red=\colE,
    obj_k5_red=\colF,
    obj_k15_red=\colG,
    obj_k30_red=\colH,
    obj_k60_red=\colI,
    reg_k0_red=\colJ,
    reg_k5_red=\colK,
    reg_k15_red=\colL,
    reg_k30_red=\colM,
    reg_k60_red=\colN,
    }{\colA & \colC & \colD & \colE & \colF & \colG & \colH & \colI & \colJ & \colK & \colL & \colM & \colN}
    \bottomrule
  \end{tabular}}
  % \footnotesize
  \begin{tabular}{l}
    \scstiming
  \end{tabular}
  \adjustbox{max width=\textwidth}{
    \begin{tabular}{lllllllllllll}
    tol.&
    \begin{tabular}{@{}c@{}}Cold \\ Start\end{tabular}
    &
    \begin{tabular}{@{}c@{}}Nearest \\ Neighbor\end{tabular}
    &
    \begin{tabular}{@{}c@{}}Fp \\$k=0$ \end{tabular}
    &
    \begin{tabular}{@{}c@{}}Fp \\$k=5$ \end{tabular}
    &
    \begin{tabular}{@{}c@{}}Fp \\$k=15$ \end{tabular}
    &
    \begin{tabular}{@{}c@{}}Fp \\$k=30$ \end{tabular}
    &
    \begin{tabular}{@{}c@{}}Fp \\$k=60$ \end{tabular}
    &
    \begin{tabular}{@{}c@{}}Reg \\$k=0$ \end{tabular}
    &
    \begin{tabular}{@{}c@{}}Reg \\$k=5$ \end{tabular}
    &
    \begin{tabular}{@{}c@{}}Reg \\$k=15$ \end{tabular}
    &
    \begin{tabular}{@{}c@{}}Reg \\$k=30$ \end{tabular}
    &
    \begin{tabular}{@{}c@{}}Reg \\$k=60$ \end{tabular} \\
    % & $k=0$ & $k=5$ & $k=15$ & $k=30$ \\
    \midrule
    % Add directly from csv reader
    % the csv file has names colnameA, colnameB
    \csvreader[head to column names, late after line=\\]{./data/timings/phase_retrieval.csv}{
    rel_tols=\colA,
    cold_start=\colC,
    nearest_neighbor=\colD,
    obj_k0=\colE,
    obj_k5=\colF,
    obj_k15=\colG,
    obj_k30=\colH,
    obj_k60=\colI,
    reg_k0=\colJ,
    reg_k5=\colK,
    reg_k15=\colL,
    reg_k30=\colM,
    reg_k60=\colN,
    }{\colA & \colC & \colD & \colE & \colF & \colG & \colH & \colI & \colJ & \colK & \colL & \colM & \colN}
    \bottomrule
  \end{tabular}}
  % \vspace*{-10mm}
\end{table}
\begin{figure}[!h]
  \centering
    \includegraphics[width=\figsize\linewidth]{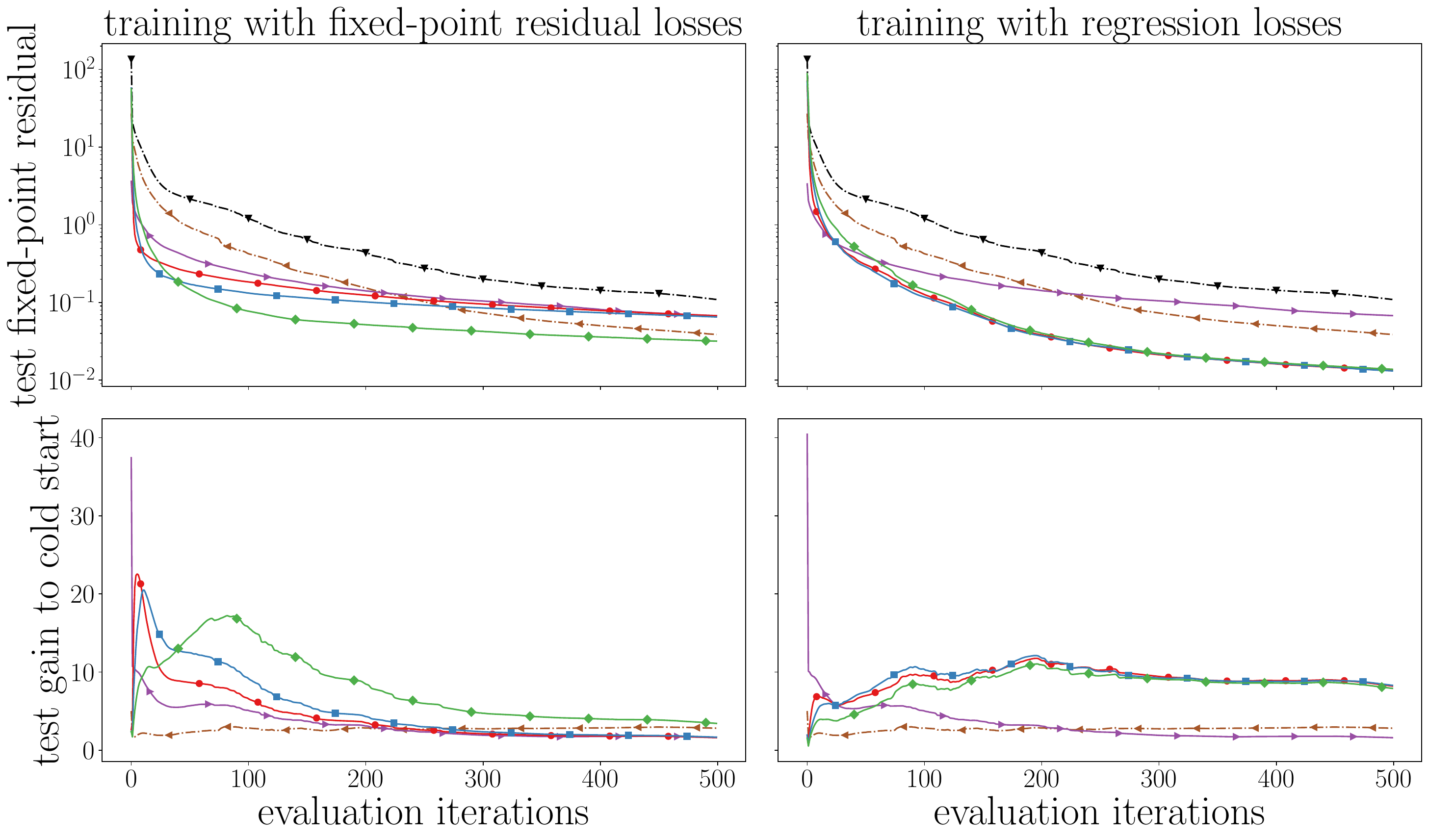}
    \\
    \legend\\
    % \linedowntri{0}{0}{0} cold start \hspace{1mm}
    % \linerighttri{152}{78}{163} nearest neighbor warm start \hspace{1mm}
    % \\learned warm-start $k=$\{\hspace{-1mm}
    %     \linelefttri{166}{86}{40} $0$
    %     \linecircle{228}{26}{28} $5$
    %     \linesquare{55}{126}{184} $15$
    %     \linediamond{77}{175}{74} $60$
    %     \}\hspace*{-2mm}\\
    \caption{Phase retrieval results.
        Other than the $k=0$ case, the learned warm starts with regression loss improvements are maintained for many evaluation steps.
        % Some of the fixed-point residual models offer large gains, but these decrease as the number of evaluation steps increases.
    }
    \label{fig:phase_retrieval_results}
    % \vspace{-5mm}
\end{figure}
% \bnote{Timings table is very tiny. Also, are these milliseconds or seconds? We could switch if it makes it easier to read. Please clarify which unit we use in every example in the paper.}

\subsubsection{Sparse PCA}\label{sec:sparse_pca_experiment}
Next, we examine the problem of sparse PCA~\citep{sparse_PCA}.
Unlike standard PCA~\citep{pca}, which typically finds principal components that depend on all observed variables, sparse PCA identifies principal components that rely on only a small subset of the variables.
% Provided with a covariance matrix, $A \in \symm_+^{n}$ and a maximum number of nonzeros, $c$, the task of identifying sparse principal components can be formulated as
The Sparse PCA problem is
\begin{equation}\label{prob:sPCA_noncvx}
  \begin{array}{ll}
  \mbox{maximize} & x^T A x \\
  \mbox{subject to} & \|x\|_2 \leq 1, \quad \card(x) \leq c, \\
  % & \card(x) \leq c,
  \end{array}
\end{equation}
where $x \in \reals^n$ is the decision variable and $\card(x)$ is the number of nonzero terms of vector $x$.
The covariance matrix $A \in \symm_+^{n}$ and desired cardinality $c \in \reals_+$ are problem data.
We consider an SDP relaxation of the non-convex problem~\eqref{prob:sPCA_noncvx} which takes the form
\begin{equation}\label{prob:sPCA_sdp}
  \begin{array}{ll}
  \mbox{maximize} & \Tr(A X) \\
  \mbox{subject to} & \Tr(X) = 1 \\
  & \mathbf{1}^T |X| \mathbf{1} \leq c\\
  & X \succeq 0,
  \end{array}
\end{equation}
where the decision variable is $X \in \symm_+^{n}$.
% The covariance matrix, $A \in \symm_+^{n \times n}$ and desired cardinality, $c \in \reals_+$ are problem data.
% We use an $r$-factor model~\citep{cvx_portfolio} to describe the covariance matrix $A$ in problem ~\eqref{prob:sPCA_sdp}.
We use an $r$-factor model~\citep{cvx_portfolio} and set $A = F \Sigma F^T$ where $F \in \reals^{n \times r}$ is the factor loading matrix and $\Sigma \in \mathbf{S}^n_+$ is a matrix that holds the factor scores.
The parameter is $\theta = \vect(\Sigma)$.
% In our parametric family of problems, we keep $F$ to be the same across all problems and allow $\Sigma$ to change, \ie, $\theta = \vect(\Sigma)$.
% Problem~\eqref{prob:sPCA_sdp} is guaranteed to be feasible for any $A$ and any $k \geq 1$ since setting $X_{1,1} = 1$ and every other entry of $X$ to be zero is a feasible solution.

\paragraph{Numerical example.}
We run our experiments with matrix size of $n=40$, a factor size of $r=10$, and a cardinality size of $c=10$.
To generate the covariance matrices, we first generate a random nominal matrix $A_0$, whose entries are sampled as an i.i.d. standard Gaussian.
We then take the singular value decomposition of $A_0 = U \Sigma_0 U^T$, and let the shared factor loading matrix $F \in \reals^{n \times r}$ be the first $r$ singular vectors of $U$.
Let $B_0 \in \symm_+^{r}$ be the diagonal matrix found by taking the square root of the first $r$ singular values of $A_0$.
% We generate the single factor loading matrix $F \in \reals^{n \times r}$ by taking the first $r$ singular vectors of a random matrix whose entries are sampled as an i.i.d. standard Gaussian.
Then, for each problem, we take $\Sigma = B B^T$ where $B = \Delta + B_0$.
Here, the elements of $\Delta \in \reals^{r \times r}$ are sampled i.i.d. from the uniform distribution $\mathcal{U}[-0.1, 0.1]$.

\paragraph{Results.}

\begin{table}[!h]
  \centering
  \small
    \renewcommand*{\arraystretch}{1.0}
  \caption{Sparse PCA.
  % We compare the two baselines, the cold-start and nearest neighbor initializations, against our learned warm-starts with values $k=0,5,15,30,60$ for both the fixed-point residual and regression losses.
  }
  \label{tab:sparse_pca_tab}
  \vspace*{-3mm}
  \begin{tabular}{l}
  \iters
  \end{tabular}
  \adjustbox{max width=\textwidth}{
    \begin{tabular}{lllllllllllll}
    Fp res.&
    \begin{tabular}{@{}c@{}}Cold \\ Start\end{tabular}
    &
    \begin{tabular}{@{}c@{}}Nearest \\ Neighbor\end{tabular}
    &
    \begin{tabular}{@{}c@{}}Fp \\$k=0$ \end{tabular}
    &
    \begin{tabular}{@{}c@{}}Fp \\$k=5$ \end{tabular}
    &
    \begin{tabular}{@{}c@{}}Fp \\$k=15$ \end{tabular}
    &
    \begin{tabular}{@{}c@{}}Fp \\$k=30$ \end{tabular}
    &
    \begin{tabular}{@{}c@{}}Fp \\$k=60$ \end{tabular}
    &
    \begin{tabular}{@{}c@{}}Reg \\$k=0$ \end{tabular}
    &
    \begin{tabular}{@{}c@{}}Reg \\$k=5$ \end{tabular}
    &
    \begin{tabular}{@{}c@{}}Reg \\$k=15$ \end{tabular}
    &
    \begin{tabular}{@{}c@{}}Reg \\$k=30$ \end{tabular}
    &
    \begin{tabular}{@{}c@{}}Reg \\$k=60$ \end{tabular} \\
    % & $k=0$ & $k=5$ & $k=15$ & $k=30$ \\
    \midrule
    % Add directly from csv reader
    % the csv file has names colnameA, colnameB
    \csvreader[head to column names, late after line=\\]{./data/iters_red/sparse_pca.csv}{
    accuracies=\colA,
    cold_start_iters=\colC,
    nearest_neighbor_iters=\colD,
    obj_k0_iters=\colE,
    obj_k5_iters=\colF,
    obj_k15_iters=\colG,
    obj_k30_iters=\colH,
    obj_k60_iters=\colI,
    reg_k0_iters=\colJ,
    reg_k5_iters=\colK,
    reg_k15_iters=\colL,
    reg_k30_iters=\colM,
    reg_k60_iters=\colN,
    }{\colA & \colC & \colD & \colE & \colF & \colG & \colH & \colI & \colJ & \colK & \colL & \colM & \colN}
    \bottomrule
  \end{tabular}}
  \begin{tabular}{l}
    \reduction
  \end{tabular}
  \adjustbox{max width=\textwidth}{
    \begin{tabular}{lllllllllllll}
    Fp res.&
    \begin{tabular}{@{}c@{}}Cold \\ Start\end{tabular}
    &
    \begin{tabular}{@{}c@{}}Nearest \\ Neighbor\end{tabular}
    &
    \begin{tabular}{@{}c@{}}Fp \\$k=0$ \end{tabular}
    &
    \begin{tabular}{@{}c@{}}Fp \\$k=5$ \end{tabular}
    &
    \begin{tabular}{@{}c@{}}Fp \\$k=15$ \end{tabular}
    &
    \begin{tabular}{@{}c@{}}Fp \\$k=30$ \end{tabular}
    &
    \begin{tabular}{@{}c@{}}Fp \\$k=60$ \end{tabular}
    &
    \begin{tabular}{@{}c@{}}Reg \\$k=0$ \end{tabular}
    &
    \begin{tabular}{@{}c@{}}Reg \\$k=5$ \end{tabular}
    &
    \begin{tabular}{@{}c@{}}Reg \\$k=15$ \end{tabular}
    &
    \begin{tabular}{@{}c@{}}Reg \\$k=30$ \end{tabular}
    &
    \begin{tabular}{@{}c@{}}Reg \\$k=60$ \end{tabular} \\
    % & $k=0$ & $k=5$ & $k=15$ & $k=30$ \\
    \midrule
    % Add directly from csv reader
    % the csv file has names colnameA, colnameB
    \csvreader[head to column names, late after line=\\]{./data/iters_red/sparse_pca.csv}{
    accuracies=\colA,
    cold_start_red=\colC,
    nearest_neighbor_red=\colD,
    obj_k0_red=\colE,
    obj_k5_red=\colF,
    obj_k15_red=\colG,
    obj_k30_red=\colH,
    obj_k60_red=\colI,
    reg_k0_red=\colJ,
    reg_k5_red=\colK,
    reg_k15_red=\colL,
    reg_k30_red=\colM,
    reg_k60_red=\colN,
    }{\colA & \colC & \colD & \colE & \colF & \colG & \colH & \colI & \colJ & \colK & \colL & \colM & \colN}
    \bottomrule
  \end{tabular}}
  % \footnotesize
  \begin{tabular}{l}
    \scstiming
  \end{tabular}
  \adjustbox{max width=\textwidth}{
    \begin{tabular}{lllllllllllll}
    tol.&
    \begin{tabular}{@{}c@{}}Cold \\ Start\end{tabular}
    &
    \begin{tabular}{@{}c@{}}Nearest \\ Neighbor\end{tabular}
    &
    \begin{tabular}{@{}c@{}}Fp \\$k=0$ \end{tabular}
    &
    \begin{tabular}{@{}c@{}}Fp \\$k=5$ \end{tabular}
    &
    \begin{tabular}{@{}c@{}}Fp \\$k=15$ \end{tabular}
    &
    \begin{tabular}{@{}c@{}}Fp \\$k=30$ \end{tabular}
    &
    \begin{tabular}{@{}c@{}}Fp \\$k=60$ \end{tabular}
    &
    \begin{tabular}{@{}c@{}}Reg \\$k=0$ \end{tabular}
    &
    \begin{tabular}{@{}c@{}}Reg \\$k=5$ \end{tabular}
    &
    \begin{tabular}{@{}c@{}}Reg \\$k=15$ \end{tabular}
    &
    \begin{tabular}{@{}c@{}}Reg \\$k=30$ \end{tabular}
    &
    \begin{tabular}{@{}c@{}}Reg \\$k=60$ \end{tabular} \\
    % & $k=0$ & $k=5$ & $k=15$ & $k=30$ \\
    \midrule
    % Add directly from csv reader
    % the csv file has names colnameA, colnameB
    \csvreader[head to column names, late after line=\\]{./data/timings/sparse_pca.csv}{
    rel_tols=\colA,
    cold_start=\colC,
    nearest_neighbor=\colD,
    obj_k0=\colE,
    obj_k5=\colF,
    obj_k15=\colG,
    obj_k30=\colH,
    obj_k60=\colI,
    reg_k0=\colJ,
    reg_k5=\colK,
    reg_k15=\colL,
    reg_k30=\colM,
    reg_k60=\colN,
    }{\colA & \colC & \colD & \colE & \colF & \colG & \colH & \colI & \colJ & \colK & \colL & \colM & \colN}
    \bottomrule
  \end{tabular}}
\end{table}

\begin{figure}[!h]
  % \vspace*{-10mm}
  \centering
    \includegraphics[width=0.7\linewidth]{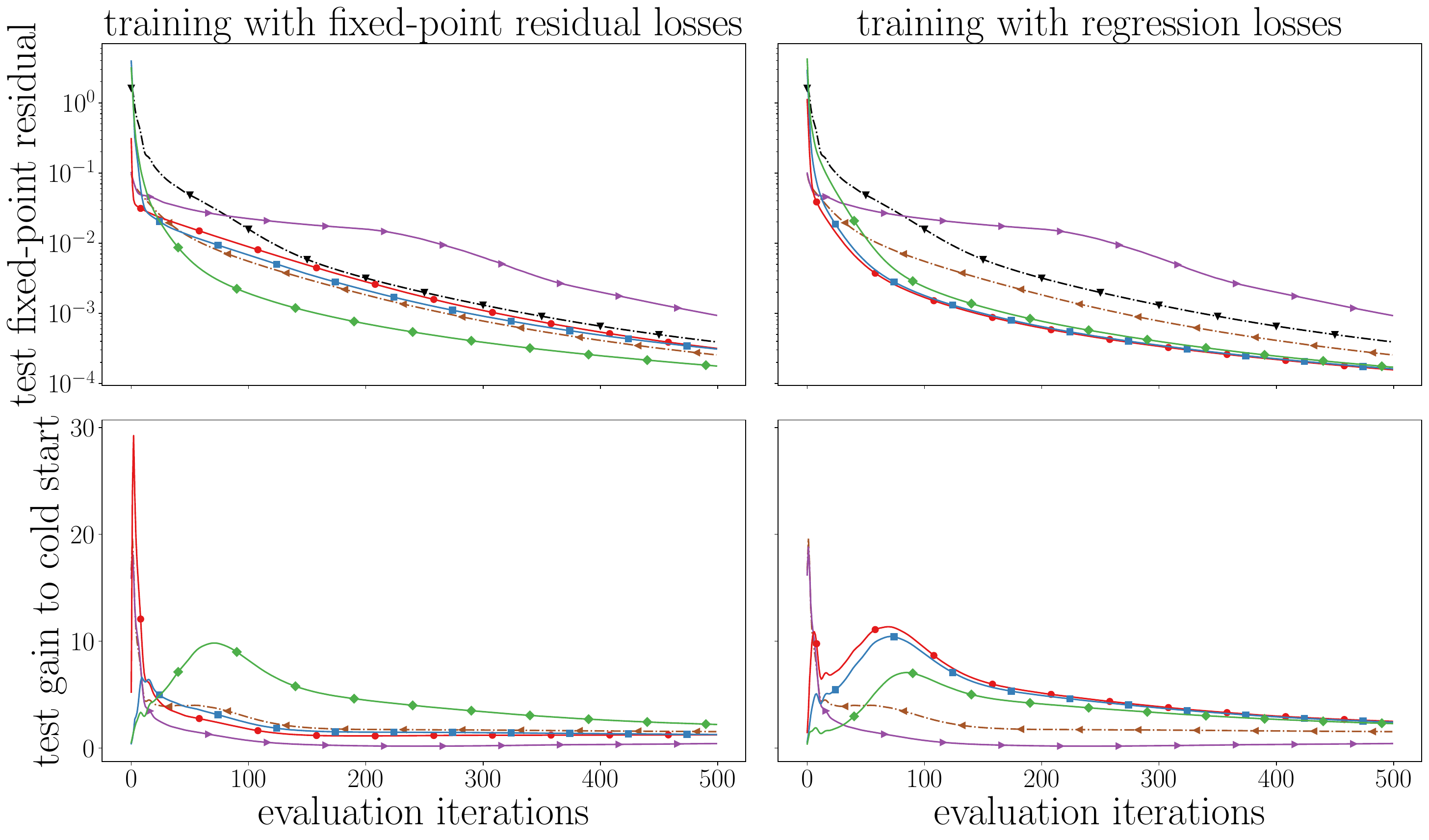}
    \\
    \legend\\
    % \linedowntri{0}{0}{0} cold start \hspace{1mm}
    % \linerighttri{152}{78}{163} nearest neighbor warm start \hspace{1mm}
    % \\learned warm-start $k=$\{\hspace{-1mm}
    %     \linelefttri{166}{86}{40} $0$
    %     \linecircle{228}{26}{28} $5$
    %     \linesquare{55}{126}{184} $15$
    %     \linediamond{77}{175}{74} $60$
    %     \}\hspace*{-2mm}\\
    \caption{Sparse PCA.
      The learned warm starts with positive $k$ that use the regression loss provide large gains.
      % All of the learned warm starts improve upon on the baselines except for the models with $k=0$.
      % The $k=0$ models for both types of losses perform worse than the cold-start after some point.
    }
    \label{fig:sparse_pca_results}
\end{figure}

Figure~\ref{fig:sparse_pca_results} and table~\ref{tab:sparse_pca_tab} show the convergence behavior of our method.
In this example, both the fixed-point residual loss and the regression loss perform with $k=0$.
All of the other learned warm starts with the regression loss and some with the fixed-point residual loss show good performance.

\appendix
\section{Examples of fixed-point algorithms}\label{app:fp_algos}
\paragraph{Gradient descent.} Here, $z \in \reals^n$ is the decision variable and $\fth$ is a convex and $L$-smooth function.
Recall that $f  : \reals^n \rightarrow \reals$ is $L$-smooth if $\|\nabla f(x) -\nabla f(y) \|_2 \leq L \|x - y\|_2 \quad \forall x, y \in \reals^n$.
If $\alpha \in (0, 2 / L)$, then the iterates of gradient descent are guaranteed to converge to an optimal solution~\citep{mon_primer}.
If $\fth$ is strongly convex, then the fixed-point operator is a contraction~\citep{mon_primer}.

\paragraph{Proximal gradient descent.}
% \citep{prox_algos}
Here, $z \in \reals^n$ is the decision variable, $\fth$ is a convex and $L$-smooth function, and $\gth$ is a convex but possibly non-smooth function.
The iterations of proximal gradient descent converge to a solution if $\alpha \in (0, 2 / L)$ \citep{prox_algos}.

\paragraph{Alternating direction method of multipliers (ADMM).}
Here, $u \in \reals^n$ is the decision variable and $\fth$ and $\gth$ are closed, convex, proper, and possibly non-smooth functions.
The iterations of ADMM generate a sequence of iterates, resulting in the convergence of both $\tilde{u}^i$ and $u^i$ to each other and to a solution of the problem.
The $z \in \reals^n$ variable serves as the associated dual variable.
We use the equivalence of ADMM to Douglas-Rachford splitting \citep{admm_dr_equiv} and write the Douglas-Rachford splitting iterations in Table~\ref{table:fp_algorithms}.
While the associated fixed-point operator to ADMM is averaged~\citep{mon_primer}, ADMM is known to converge linearly under certain conditions~\citep{eckstein_thesis,giselsson_lin_conv}.

\paragraph{OSQP.}
The operator splitting quadratic program (OSQP)~\citep{osqp} solver is based on ADMM.
Here, $P \in \symm_+^n$, $A \in \reals^{m \times n}$, $c \in \reals^n$, $l \in \reals^m$, and $u \in \reals^m$ are problem data, and $\Pi_{[l,u]}$ is the projection onto the box, $[l,u]$.
The decision variable is $x \in \reals^n$.
While the algorithm uses $x$, $w$, and $y$ variables, the fixed-point operator is represented as an operator on a smaller vector, as shown in~\citet{infeas_detection}.

\paragraph{SCS.}
The splitting conic solver (SCS)~\citep{scs_quadratic} is also based on ADMM.
Here, $P \in \symm_+^n$, $A \in \reals^{m \times n}$, $c \in \reals^n$, and $b \in \reals^m$ are problem data, and $\Pi_{\mathcal{C}}$ is the projection onto the cone $\mathcal{C}$.
The decision variables are $x \in \reals^n$ and $s \in \reals^m$.
% Here, $A$ is a matrix in $\reals^{m \times n}$, $b$ and $c$ are vectors in $\reals^m$ and $\reals^n$ respectively, and $\mathcal{K}$ is a convex conic set.
For simplicity, Table~\ref{table:fp_algorithms} includes the simplified version of the SCS algorithm without the homogeneous self-dual embedding.
The SCS algorithm, the one we use in the numerical experiments in \Sec~\ref{sec:scs_experiments}, is based on the homogeneous self-dual embedding; see~\citet{scs_quadratic} for the details.
As in~\citet{neural_fp_accel_amos}, our implementation normalizes the fixed-point residual by the $\tau$ scaling factor to ensure that the fixed-point residual is not artificially small.

\section{Proofs}

\subsection{Proof of \Lem~\texorpdfstring{\ref{lemma:marg}}{}}\label{proof:marg}
Let $w' = w + u$ and let $S_w$ be the set of perturbations $w'$ such that
\begin{equation*}
\textstyle
  S_w \subset \left\{w' \mid \max_{\theta \in \Theta} \|h_{w'}(\theta) - \hw(\theta)\|_2 \leq \gamma / 2\right\}.
\end{equation*}
Let $q$ be the probability density function over $w'$.
We construct a new distribution $\tilde{Q}$ over predictors $h_{\tilde{w}}$ where $\tilde{w}$ is restricted to $S_w$ with the probability density function $\tilde{q}(\tilde{w}) = (1/Z)q(\tilde{w})$ if $\tilde{w} \in S_{w}$ and otherwise $0$,
% \begin{equation*}
%   \tilde{q}(\tilde{w}) = \frac{1}{Z} \begin{cases}
%     q(\tilde{w}) & \tilde{w} \in S_w \\
%     0 & \text{else},
%   \end{cases}
% \end{equation*}
where $Z$ is a normalizing constant.
By the assumption of the lemma, $Z = \mathbf{P}(w' \ \in S_w) \geq 1 / 2$.
By the definition of $\tilde{Q}$, we have
\begin{equation*}
\textstyle
  \max_{\theta \in \Theta} \|h_{\tilde{w}}(\theta) - \hw(\theta)\|_2 \leq \gamma / 2.
\end{equation*}
% Therefore, the perturbation from $w$ to $\tilde{w}$ can change the neural network prediction by at most $\gamma / 2$.
Therefore, $\Lot[\hw(\theta)] \leq g_{\gamma / 2, \theta}^t(h_{\tilde{w}}(\theta)) \leq g_{\gamma, \theta}^t(\hw(\theta))$ almost surely for every $\theta \in \Theta$.
Hence, for every $\tilde{w}$ drawn from the probability density function $\tilde{Q}$, almost surely,
% \begin{talign}
%   &R^t(w) \leq R^t_{\gamma/2}(\tilde{w}) \label{eq:lemm_pf_bd1}\\
%   &\hat{R}^t_{\gamma/2}(\tilde{w}) \leq \hat{R}^t_{\gamma}(w)\label{eq:lemm_pf_bd2}.
% \end{talign}
\begin{equation}\label{eq:lemm_pf_bd1}
\textstyle
    R^t(w) \leq R^t_{\gamma/2}(\tilde{w}), \quad \hat{R}^t_{\gamma/2}(\tilde{w}) \leq \hat{R}^t_{\gamma}(w).
\end{equation}
% \bnote{Are these inqeualities above obvious? Maybe I would add a small remark about where they come from.}
Now using these two inequalities above and the PAC-Bayes theorem, we get
\begin{talign*}
  R^t(w) &\leq \mathbf{E}_{\tilde{w}}[R^t_{\gamma/2}(\tilde{w})] \\
  &\leq \mathbf{E}_{\tilde{w}}[\hat{R}^t_{\gamma/2}(\tilde{w})] + 2\Cgtt \sqrt{(2 KL(\tilde{w} || \prior) + \log(2N/\delta)) / (N - 1)} \\
  & \leq \hat{R}^t_{\gamma}(w) + 2 \Cgtt \sqrt{(2 KL (\tilde{w} || \prior) + \log(2N/\delta)) / (N - 1)} \\
  & \leq \hat{R}^t_{\gamma}(w) + 4 \Cgtt \sqrt{(2 KL (w' || \prior) + \log(6N/\delta)) / (N - 1)}.
\end{talign*}
The first and third inequalities come from~\eqref{eq:lemm_pf_bd1}, and the second inequality follows from~\eqref{eq:_pac_bayes}.
% The first inequality comes from applying \Eqn~\eqref{eq:lemm_pf_bd1}.
% The second inequality comes from the main PAC-Bayesian theorem, \Eqn~\eqref{eq:_pac_bayes}.
% The third inequality comes from applying \Eqn~\eqref{eq:lemm_pf_bd2}.
The last inequality comes from the following calculation which we repeat from~\citet[\Sec~4]{PAC_Bayes_nn}.
Let $S_w^c$ denote the complement of $S_w$ and $\tilde{q}^c$ denote the density function $q$ restricted to $S_w^c$ and normalized.
Then we get
\begin{equation*}
  {\rm KL}(q||p) = Z {\rm KL}(\tilde{q}||p) + (1 - Z){\rm KL}(\tilde{q}^c||p) - H(Z),
\end{equation*}
% where $H(Z) = [{\rm KL}(q||p) + H(Z) - (1 - Z){\rm KL}(\tilde{q}^c||p) - H(Z)] / Z$.
where $H(Z) = -Z \log Z - (1 - Z) \log(1 - Z)$ is the binary entropy function.
Since the KL-divergence is always positive,
\begin{equation*}
\textstyle
  {\rm KL}(\tilde{q}||p) =  [{\rm KL}(q||p) + H(Z) - (1 - Z)KL(\tilde{q}^c||p)] / Z \leq 2({\rm KL}(q||p) + 1).
\end{equation*}
Using the additive properties of logarithms, $1 + \log(2N / \delta) \leq \log(6N / \delta)$.
%  for all non-negative $N$ and $\delta$.
% relevant $N$ and $\delta$.
% This follows from the properties of logarithms and the non-negativity of $N$ and $\delta$.

% since $N \geq 1$ and $0 < \delta < 1$.

% \subsection{Proof PAC-Bayes generalization}
% \ref{thm:gen_pac}
\subsection{Proof of \Thm~\texorpdfstring{\ref{thm:gen_pac}}{}}\label{proof:gen_pac}
% \bnote{Mention the exact theorem this refers to.}
Our proof follows a similar structure as the proof of~\citet[\Thm~1]{PAC_Bayes_nn}.
Let $\zeta = (\Pi_{i=1}^L \|W_i\|_2)^{1/L}$ and consider a neural network with weights $\tilde{W}_i = \zeta W_i / \|W_i\|_2$.
% The ReLU activation, $\phi$, is homogeneous, meaning that $\phi(c z) = c \phi(z)$ for $c \geq 0$.
Due to the homogeneity of the ReLU, we have $\hw(\theta) = h_{\tilde{w}}(\theta)$ for all $\theta \in \Theta$~\citep{PAC_Bayes_nn}.
% To see this, observe that
% \begin{align*}
%   h_{\tw}(\theta) &= \tW_L \phi(\tW_{L-1} \phi( \dots \phi(\tW_1 \theta))) \\
%   &= \frac{\zeta}{\|W_L\|_2} W_L \phi(\frac{\zeta}{\|W_{L-1}\|_2}W_{L-1} \phi( \dots \frac{\zeta}{\|W_1\|_2}\phi(W_1 \theta))) \\
%   &= \frac{\zeta^L}{\Pi_{i=1}^L\|W_i\|_2} W_L \phi(W_{L-1} \phi( \dots \phi(W_1 \theta))) \\
%   &= \hw(\theta).
% \end{align*}
% Therefore $\Lot[\hw(\theta)] = \Lot[h_{\tilde{w}}(\theta)]$ for all $\theta \in \Theta$ and any $t \geq 0$.
Since $(\Pi_{i=1}^L \|W_i\|_2)^{1/L} = (\Pi_{i=1}^L \|\tilde{W}_i\|_2)^{1/L}$ and $\|W_i\|_F / \|W_i\|_2 = \|\tilde{W}_i\|_F / \|\tilde{W}_i\|_2$, inequality~\eqref{eq:gen_pac} is the same for $w$ and $\tw$.
Therefore, it is sufficient to prove the theorem only for the normalized weights $\tilde{w}$ and we can assume that the spectral norm of the weight matrix is equal across all layers, \ie, $\|W_i\|_2=\zeta$.
Now, we break our proof into two cases depending on the product of the spectral norm of the weight matrices.
The main difference between our proof and the proof for~\citet[\Thm~1]{PAC_Bayes_nn} is that we introduce a secondary case.
The main case analysis is similar.

\paragraph{Main case.}
In the main case, $\zeta^L \geq \gamma / (2 B)$.
We choose the prior distribution $\prior$ to be $\Ns$ and consider the perturbation $u \sim \Ns$.
% \bnote{Please apply everywhere. I only changed a few of them. If your citation is used as a word, I would put it between parentheses (citep). If the authors are subjects, then I would keep XXX et al outside the parentheses.}
As in \citet{PAC_Bayes_nn}, since the prior distribution $\prior$ cannot depend on $\zeta$, we consider predetermined values of $\tilde{\zeta}$ on a grid and then do a union bound.
For now, we consider $\tilde{\zeta}$ fixed and consider all $\zeta$ such that $|\zeta - \tilde{\zeta}| \leq \zeta / L$.
This ensures that each relevant value of $\zeta$ is covered by some $\tilde{\zeta}$ on the grid.
Since $|\zeta - \tilde{\zeta}| \leq \zeta / L$ we get the inequalities
\begin{equation}\label{eq:zeta_e_bd}
\textstyle
  \zeta^{L-1} / e \leq \tilde{\zeta}^{L-1} \leq e\zeta^{L-1}.
\end{equation}
This follows from the inequalities $(1 + 1 / x)^{x-1} \leq e$ and $1 / e \leq (1 - 1 / x)^{x - 1}$ which themselves are consequences the inequality $1 + y \leq e^y$ for all $y$.
% which come from the inequalities $1 + x \leq e^x$~\citep[Footnote 1]{PAC_Bayes_nn}.
Since the entries of each $U_i$ are drawn from $\Ns$, we have the bound on the spectral norm of each $U_i$ \citep{Tropp_2011}
\begin{equation*}\label{eq:tropp}
\textstyle
  \prob_{U_i \sim \Ns}(\|U_i\|_2 > t) \leq 2 \bh e^{-t^2 / (2 \bh \sigma^2)}.
\end{equation*}
We can take a union bound to get
\begin{equation}\label{eq:tropp_union}
\textstyle
  \prob_{U_1, \dots, U_L\sim \Ns}(\|U_1\|_2 \leq t, \dots, \|U_L\|_2 \leq t) \geq 1 - 2 L \bh e^{-t^2 / (2 \bh \sigma^2)}.
\end{equation}
By setting the right hand side of~\eqref{eq:tropp_union} to $1 / 2$, we establish that with probability at least $1 / 2$, the spectral norm of every perturbation $U_i$ is bounded by $\sigma \sqrt{2 \bh \log(4 L \bh)}$ simultaneously.
% Plugging this into theorem~\eqref{thm:nn_bd_loss}, we get that with probability at least $\frac{1}{2}$,
We choose $\sigma = \gamma / (21 L B\tilde{\zeta}^{L-1}\sqrt{\bh \log(4 \bh L)})$ and now verify that with probability at least $1/2$, $\|U_i\|_2 \leq \|W_i\|_2 / L = \zeta / L$ holds, a condition of ~\citet[\Lem~2]{PAC_Bayes_nn}:
\begin{align*}
  \|U_i\|_2 &\leq \sigma \sqrt{2 \bh \log(4 L \bh)}
  = \gamma \sqrt{2} / (21 L B \tilde{\zeta}^{L-1})\\
  &\leq e 2 \sqrt{2} \gamma / (42 L B \zeta^{L-1})
  \leq 2 \sqrt{2} e \zeta / (21 L)  \leq \zeta / L.
\end{align*}
In the first line, the inequality comes from the perturbation bound on $\|U_i\|_2$, and the equality follows from plugging in for $\sigma$.
The second line follows from~\eqref{eq:zeta_e_bd}, and the assumption from the main case that $\zeta^L > \gamma / (2 B)$.
Now that the conditions are met, we apply~\citet[\Lem~2]{PAC_Bayes_nn}.
The following holds with probability at least $1 / 2$:
\begin{talign*}
\textstyle
    \max_{\theta \in \Theta}\|\hw(\theta) - \hwp(\theta)\|_2 &\leq e B \zeta^{L-1} \sum_{i=1}^L \|U_i\|_2\\
    &\leq e^2 L B \tilde{\zeta}^{L-1}\sigma \sqrt{2 \bh \log(4 L \bh)} \leq \gamma / 2.
\end{talign*}
In the second inequality, we use~\eqref{eq:zeta_e_bd}.
The last inequality follows from the choice of $\sigma$.
Now we calculate the KL-term with $\prior \sim \Ns$ and $u$ chosen with the above value of $\sigma$:
\begin{talign}
  {\rm KL}(w + u || \prior) &\leq \frac{\|w\|_2^2}{2\sigma^2}
  = \frac{21^2 L^2 B^2\tilde{\zeta}^{2L-2} \bh \log(4 \bh L)}{2 \gamma^2}\sum_{i=1}^L \|W_i\|_F^2 \notag \\
  &\leq \frac{21^2\zeta^{2L}}{2\gamma^2} B^2 L^2 \bh \log(4 L \bh)  \sum_{i=1}^L \frac{\|W_i\|_F^2}{\zeta^2}. \label{eq:kl_line2}
  % &\leq   B^2 L^2 \bh \log(L \bh)\frac{\Pi_{i=1}^L\|W_i\|_2^2}{\gamma^2}  \sum_{i=1}^L \frac{\|W_i\|_F^2}{\|W_i\|_2^2}.
\end{talign}
What remains is to take a union bound over the different choices of $\tilde{\zeta}$.
% We must see how many choices of $\tilde{\zeta}$ such that $|\tilde{\zeta} - \zeta| \leq \zeta / L$.
% We must determine the number of different choices of $\tilde{\zeta}$ we need to pick.
We only need to consider values of $\zeta$ in the range of
\begin{equation}\label{eq:zeta_range}
\textstyle
  (\gamma / (2B))^{1/L} \leq \zeta \leq (\gamma \sqrt{N} / (2B))^{1/L}.
\end{equation}
% It is unnecessary to take into account the case where $\zeta^L < \gamma / (2B)$, as the assumption made in the theorem contradicts this scenario.
Since we are in the main case, we do not have to consider $\zeta^L < \gamma / (2B)$.
% We don't need to consider $\zeta^L < \gamma / (2B)$ since the assumption of the theorem says the opposite.
% An assumption of the theorem is that $\zeta^L \geq \gamma / (2B)$.
% F\\
% \red{=============}
% Recall that $R^k(w)$ is upper bounded by $C$ for any $w$ we consider.
% % Assume that $\Lok[0] \geq c_1 C$ for every $\theta$.
% \\\red{case 1}\\
% % If $\zeta^L < \frac{\gamma}{2 B}$ then for any $\theta$, $\|\hw(\theta)\|_2 \leq \zeta^L B \leq \gamma/2$ and therefore $\hat{R}_\gamma^k(w) \geq \hat{R}^k(0)$ which means that the bound in the theorem holds trivially since the first term will be at least $C$.
% \\\red{case 3} \\
Alternatively, if $\zeta^L > \gamma \sqrt{N} / (2 B)$, then the upper bound on the KL term in~\eqref{eq:kl_line2} is greater than $N$.
% Alternatively, if $\zeta^L > \gamma \sqrt{N} / (2 B)$, then the first term inside the square root of the equation (including the denominator) in the first case of \Thm~\ref{thm:gen_pac} is greater than one.
% Therefore the bound in \Thm~\ref{thm:gen_pac} holds trivially.
% then the first term inside the square root of 
% \Eqn~\eqref{eq:gen_pac} is greater than one so the bound holds trivially.
To see this, first note that the frobenius norm is always at least the operator norm of a given matrix, so $\|W_i\|_F \geq \zeta$ for $i=1, \dots, L$.
Then, the right hand side of~\eqref{eq:kl_line2} becomes at least $21^2 L^2 \bh \log(4 L \bh) N / 8$ which is greater than $N$.
\Thm~\ref{thm:gen_pac} is obtained by using the bound in the right hand side of~\eqref{eq:kl_line2} for the KL term in \Lem~\ref{lemma:marg}.
Therefore \Thm~\ref{thm:gen_pac} holds trivially since $\Cgtt$ upper bounds $R^t(w)$ and the entire square root term in \Lem~\ref{lemma:marg} is at least one.
% Therefore the entire square root term from \Lem~\ref{lemma:marg} is at least one if the right hand side of~\eqref{eq:kl_line2} is used to bound the KL-term and \Thm~\ref{thm:gen_pac} holds trivially since $\Cgtt$ upper bounds $R^t(w)$.
Hence, we only need to consider $\zeta$ in the range of~\eqref{eq:zeta_range}.
% Since $L, \bh \geq 1$ and $\log(\bh L) \geq 1$, we know that the first term inside the square root of \Eqn~\eqref{eq:gen_pac} is at least $4 B^2 \zeta^{2L} /(\gamma^2 N)$.% since the constant term is greater than $4$.
% Taking the square root of this quantity yields $2 B \zeta^{L} / (\gamma \sqrt{N})$.
% \\\red{cover size argument}\\

The condition $L |\tilde{\zeta} - \zeta| \leq (\gamma / (2B))^{1/ L}$ is sufficient to satisfy the required condition that $|\tilde{\zeta} - \zeta| \leq \zeta / L$ since $\zeta^L \geq \gamma / (2B)$.
% Therefore, we can use a cover size of $L N^{1 / (2L)}$.
% We need to cover $\zeta$ that is within the bounds of~\eqref{eq:zeta_range}.
For each $\tilde{\zeta}$ that we pick, we consider $\zeta$ within a distance of $(\gamma / (2B))^{1/L} / L$.
We need to pick enough $\tilde{\zeta}$'s to cover the whole region in~\eqref{eq:zeta_range}.
% The cover size needs to be at least
% \begin{equation}
%   \frac{(\frac{\gamma \sqrt{N}}{2B})^{1/L} - (\frac{\gamma}{2B})^{1/L}}{\frac{1}{L}(\frac{\gamma}{2B})^{1/L}} =  L(N^{1 / (2L)} - 1).
% \end{equation}
Picking a cover size of $LN^{\frac{1}{2L}}$ satisfies this condition since
\begin{equation*}
  \frac{(\frac{\gamma \sqrt{N}}{2B})^{1/L} - (\frac{\gamma}{2B})^{1/L}}{\frac{1}{L}(\frac{\gamma}{2B})^{1/L}} =  L(N^{1 / (2L)} - 1).
\end{equation*}
% Given that the constant is at least $2$, we get that the second term in theorem~\ref{thm:gen_pac} is at least $C$.
% Using the inequality~\eqref{eq:zeta_e_bd}, this bound is equal to $\frac{B^2 e^2 \zeta^{4L - 2}}{2 \gamma^2}$.
% \noteRajiv{Go back and make explain the constant of $4$}
Therefore, by using \Lem~\ref{lemma:marg}, with probability at most $\tilde{\delta}$ and for all $\tilde{w}$ such that $|\zeta - \tilde{\zeta}| \leq \zeta / L$, the following bound is violated:
\begin{equation*}
\textstyle
  R^t(w) \leq \hat{R}^t_{\gamma}(\tilde{w}) + \mathcal{O} \left(\sqrt{\frac{B^2 L^2 \log(L \bh) \Pi_{j=1}^L \|\tilde{W}_j\|_2^2 \sum_{i=1}^L \frac{\|\tilde{W}_i\|_F^2}{\|\tilde{W}_i\|_2^2} + \log(\frac{N}{\tilde{\delta}})}{\gamma^2 N}}\right).
\end{equation*}
% Applying the union bound says that with probability at most $\tilde{\delta} LN^{1/ (2L)}$ the same bound is violated.
By applying the union bound over the cover size, with probability at most $\tilde{\delta} LN^{1/ (2L)}$, the same bound is violated for at least one of the $\tilde{\zeta}$'s out of the cover.
% \begin{equation*}
%   R^t(w) \leq \hat{R}^t(w) + \mathcal{O} \biggl(\sqrt{\frac{B^2 L^2 \log(L \bh) \Pi_{i=1}^L \|W_i\|_2^2 \sum_{i=1}^L \frac{\|W_i\|_F^2}{\|W_i\|_2^2} + \log(\frac{N}{\tilde{\delta}})}{\gamma^2 N}}\biggr)
% \end{equation*}
% is violated for any $\zeta$.
Setting $\delta = \tilde{\delta} L N^{1 / (2L)}$ and recalling that the proof generalizes from normalized weights $\tilde{w}$ to weights $w$ gives the final result.
%  that with probability at least $1 - \delta$, the following generalization bound holds:
% \begin{equation*}
%   R^t(w) \leq \hat{R}^t(w) + \mathcal{O} \biggl(\sqrt{\frac{B^2 L^2 \log(L \bh) \Pi_{i=1}^L \|W_i\|_2^2 \sum_{i=1}^L \frac{\|W_i\|_F^2}{\|W_i\|_2^2} + \log(\frac{L N}{\delta})}{\gamma^2 N}}\biggr).
% \end{equation*}

\paragraph{Secondary case.}
% Since $\Pi_{i=1}^L \|W_i\|_2 \leq \gamma / (2 B)$,
In this case, $\|\hw(\theta)\|_2 \leq B (\Pi_{i=1}^L \|W_i\|_2) \leq \gamma / 2$.
% Instead of using the McAllester bound, we use Hoeffding's inequality which states that if $X_1, \dots, X_n$ are independent random variables such that $a \leq X_i \leq b$ almost surely, then~\citep{hoeffding}:
% % concentration_inequalities}:
% \begin{equation*}
% \textstyle
%   \mathbf{E}\exp\left(t \sum_{i=1}^n (X_i - \mathbf{E}[X_i])\right) \leq \exp (n t^2 (b - a) / 8).
% \end{equation*}
We get the following:
\begin{talign*}
  R^t(w) &\leq R^t_{\gamma/2}(0) \\
  &\leq \hat{R}^t_{\gamma/2}(0) + \Cgtt \sqrt{\log (1 / \delta) / (2N)} \quad \text{w.p. at least } 1 - \delta\\
  &\leq \hat{R}^t_{\gamma}(w) + \Cgtt \sqrt{\log (1 / \delta) / (2N)} \quad \text{w.p. at least } 1 - \delta
\end{talign*}
The first and third lines come from $\|\hw(\theta)\|_2 \leq \gamma / 2$ and the definition of the marginal fixed-point residual.
The second lines uses Hoeffding's inequality as in \citet[\Eqn~1.3]{pac_bayes_intro}, which is permissible since the prediction is the zero vector and is therefore independent of the data.
% This is allowed since the warm-start prediction is zero, and therefore independent of the data.
% In the last line, we again use the marginal fixed-point residual.

\subsection{Proof of \Lem~\texorpdfstring{\ref{lem:fp_reg_bound}}{}}\label{sec:fp_2_reg_proof}
First, let $z^\star(\theta)$ be the nearest fixed-point of the operator $\op$ to $z$ so that $r_{\theta}(z) = \|z - z^\star(\theta)\|_2$.
% \begin{align*}
%     \ell^{\rm{fp}}_{\theta}(z) &= \|\op(z) - z + z^\star(\theta) - z^\star(\theta)\|_2 
%     \leq \|\op(z) -z^\star(\theta) \|_2 + \|z -z^\star(\theta) \|_2 \leq 2 \ell^{\rm{reg}}_{\theta}(z)
%     % &\leq 2 \ell^{\rm{reg}}_{\theta}(z).
%   \end{align*}
\begin{equation*}
\textstyle
\ell^{\rm{fp}}_{\theta}(z) = \|\op(z) - z\|_2  \leq \|\op(z) -z^\star(\theta) \|_2 + \|z -z^\star(\theta) \|_2 \leq 2 r_{\theta}(z)
\end{equation*}
The first inequality uses the triangle inequality, and the second inequality uses the non-expansiveness of $\op$.

\subsection{Proof of \Lem~\texorpdfstring{\ref{lemma:nonexp_r}}{}}\label{proof:nonexp_r}
\begin{talign*}
    |\rok[z] - \rok[w]| &= |\|\Tj[k][z] - \Pi_{\fix \op}(\Tj[k][z])\|_2 - \|\Tj[k][w] - \Pi_{\fix \op}(\Tj[k][w])\|_2| \\
    & \leq \|\Tj[k][z] - \Pi_{\fix \op}(\Tj[k][z]) + \Pi_{\fix \op}(\Tj[k][w]) - \Tj[k][w] \|_2 \\
    & \leq \|\Tj[k][z] - \Tj[k][w]\|_2 + \|\Pi_{\fix \op}(\Tj[k][z]) - \Pi_{\fix \op}(\Tj[k][w])\|_2 \\
    & \leq 2 \|\Tj[k][z]-\Tj[k][w]\|_2 \leq 2 \|z-w\|_2
\end{talign*}
The first two inequalities use the reverse triangle inequality and triangle inequality.
Since $\op$ is non-expansive, $\fix \op$  is a convex set~\citep[\Sec~2.4.1]{mon_primer}.
The third inequality follows since the projection onto a convex set is non-expansive \citep[\Sec~3.1]{mon_primer}.
In the last inequality, we use the non-expansiveness of $\op$.

\bibliography{bibliography_nourl}

\begin{thebibliography}{105}
\providecommand{\natexlab}[1]{#1}
\providecommand{\url}[1]{\texttt{#1}}
\expandafter\ifx\csname urlstyle\endcsname\relax
  \providecommand{\doi}[1]{doi: #1}\else
  \providecommand{\doi}{doi: \begingroup \urlstyle{rm}\Url}\fi

\bibitem[Alquier(2023)]{pac_bayes_intro}
P.~Alquier.
\newblock User-friendly introduction to pac-bayes bounds.
\newblock \emph{arXiv e-prints}, 2023.

\bibitem[Amos(2023)]{amos_tutorial}
B.~Amos.
\newblock Tutorial on amortized optimization.
\newblock \emph{Foundations and Trends in Machine Learning}, 16\penalty0
  (5):\penalty0 592--732, 2023.
\newblock ISSN 1935-8237.

\bibitem[Andrychowicz et~al.(2016)Andrychowicz, Denil, Colmenarejo, Hoffman,
  Pfau, Schaul, Shillingford, and de~Freitas]{learn_learn_gd_gd}
M.~Andrychowicz, M.~Denil, S.~G. Colmenarejo, M.~W. Hoffman, D.~Pfau,
  T.~Schaul, B.~Shillingford, and N.~de~Freitas.
\newblock Learning to learn by gradient descent by gradient descent.
\newblock In \emph{Proceedings of the 30th International Conference on Neural
  Information Processing Systems}, NIPS'16, page 3988–3996, Red Hook, NY,
  USA, 2016. Curran Associates Inc.
\newblock ISBN 9781510838819.

\bibitem[Bai et~al.(2022)Bai, Koltun, and Kolter]{bai2022neural}
S.~Bai, V.~Koltun, and J.~Z. Kolter.
\newblock Neural deep equilibrium solvers.
\newblock In \emph{International Conference on Learning Representations}, 2022.

\bibitem[Baker(2019)]{warm_start_power_flow}
K.~Baker.
\newblock Learning warm-start points for ac optimal power flow.
\newblock In \emph{2019 IEEE 29th International Workshop on Machine Learning
  for Signal Processing (MLSP)}, pages 1--6, 2019.

\bibitem[Balatsoukas-Stimming and Studer(2019)]{deep_unfolding_wireless}
A.~Balatsoukas-Stimming and C.~Studer.
\newblock Deep unfolding for communications systems: A survey and some new
  directions.
\newblock In \emph{2019 IEEE International Workshop on Signal Processing
  Systems (SiPS)}, pages 266--271, 2019.

\bibitem[Banert et~al.(2021)Banert, Rudzusika, {\"O}ktem, and
  Adler]{banert2021accelerated}
S.~Banert, J.~Rudzusika, O.~{\"O}ktem, and J.~Adler.
\newblock Accelerated forward-backward optimization using deep learning.
\newblock \emph{arXiv preprint arXiv:2105.05210}, 2021.

\bibitem[Banjac et~al.(2019)Banjac, Goulart, Stellato, and
  Boyd]{infeas_detection}
G.~Banjac, P.~Goulart, B.~Stellato, and S.~Boyd.
\newblock Infeasibility detection in the alternating direction method of
  multipliers for convex optimization.
\newblock \emph{Journal of Optimization Theory and Applications}, 183, 2019.

\bibitem[Bartlett et~al.(2022)Bartlett, Indyk, and
  Wagner]{bartlett2022generalization}
P.~L. Bartlett, P.~Indyk, and T.~Wagner.
\newblock Generalization bounds for data-driven numerical linear algebra.
\newblock In P.~Loh and M.~Raginsky, editors, \emph{Conference on Learning
  Theory, 2-5 July 2022, London, {UK}}, volume 178 of \emph{Proceedings of
  Machine Learning Research}, pages 2013--2040. {PMLR}, 2022.

\bibitem[Bauschke and Combettes(2011)]{bauschke2011convex}
H.~H. Bauschke and P.~L. Combettes.
\newblock \emph{Convex Analysis and Monotone Operator Theory in {H}ilbert
  Spaces}.
\newblock Springer, 1st edition, 2011.

\bibitem[Baydin et~al.(2017)Baydin, Pearlmutter, Radul, and Siskind]{autodiff}
A.~G. Baydin, B.~A. Pearlmutter, A.~Radul, and J.~M. Siskind.
\newblock Automatic differentiation in machine learning: a survey.
\newblock \emph{J. Mach. Learn. Res.}, 18:\penalty0 153:1--153:43, 2017.

\bibitem[Beck and Teboulle(2009)]{fista}
A.~Beck and M.~Teboulle.
\newblock A fast iterative shrinkage-thresholding algorithm with application to
  wavelet-based image deblurring.
\newblock In \emph{2009 IEEE International Conference on Acoustics, Speech and
  Signal Processing}, pages 693--696, 2009.

\bibitem[Bellman(1957)]{bellman_dp}
R.~Bellman.
\newblock \emph{{Dynamic Programming}}.
\newblock Dover Publications, 1957.
\newblock ISBN 9780486428093.

\bibitem[Benvenuto et~al.(2010)Benvenuto, Zanella, Zanni, and
  Bertero]{nonneg_deblur}
F.~Benvenuto, R.~Zanella, L.~Zanni, and M.~Bertero.
\newblock {Nonnegative least-squares image deblurring: improved gradient
  projection approaches}.
\newblock \emph{Inverse Problems}, 26\penalty0 (2):\penalty0 025004, 2010.

\bibitem[Bertsimas and Stellato(2019)]{online_milliseconds}
D.~Bertsimas and B.~Stellato.
\newblock Online mixed-integer optimization in milliseconds.
\newblock \emph{arXiv e-prints}, 2019.

\bibitem[Bertsimas and Stellato(2021)]{voice_optimization}
D.~Bertsimas and B.~Stellato.
\newblock The voice of optimization.
\newblock \emph{Machine Learning}, 110:\penalty0 249--277, 2021.

\bibitem[Borrelli et~al.(2017)Borrelli, Bemporad, and
  Morari]{borrelli_mpc_book}
F.~Borrelli, A.~Bemporad, and M.~Morari.
\newblock \emph{Predictive Control for Linear and Hybrid Systems}.
\newblock Cambridge University Press, 2017.

\bibitem[Boyd et~al.(2017)Boyd, Busseti, Diamond, Kahn, Koh, Nystrup, and
  Speth]{cvx_portfolio}
S.~Boyd, E.~Busseti, S.~Diamond, R.~N. Kahn, K.~Koh, P.~Nystrup, and J.~Speth.
\newblock Multi-period trading via convex optimization, 2017.

\bibitem[Bradbury et~al.(2018)Bradbury, Frostig, Hawkins, Johnson, Leary,
  Maclaurin, Necula, Paszke, Vander{P}las, Wanderman-{M}ilne, and Zhang]{jax}
J.~Bradbury, R.~Frostig, P.~Hawkins, M.~J. Johnson, C.~Leary, D.~Maclaurin,
  G.~Necula, A.~Paszke, J.~Vander{P}las, S.~Wanderman-{M}ilne, and Q.~Zhang.
\newblock {JAX}: composable transformations of {P}ython+{N}um{P}y programs,
  2018.

\bibitem[Brice{\~{n}}o-Arias and Combettes(2013)]{nash_operator}
L.~M. Brice{\~{n}}o-Arias and P.~L. Combettes.
\newblock Monotone operator methods for nash equilibria in non-potential games.
\newblock In D.~H. Bailey, H.~H. Bauschke, P.~Borwein, F.~Garvan, M.~Th{\'e}ra,
  J.~D. Vanderwerff, and H.~Wolkowicz, editors, \emph{Computational and
  Analytical Mathematics}, pages 143--159, New York, NY, 2013. Springer New
  York.
\newblock ISBN 978-1-4614-7621-4.

\bibitem[Cand{\`e}s et~al.(2015)Cand{\`e}s, Li, and
  Soltanolkotabi]{candes2015phase}
E.~J. Cand{\`e}s, X.~Li, and M.~Soltanolkotabi.
\newblock Phase retrieval from coded diffraction patterns.
\newblock \emph{Applied and Computational Harmonic Analysis}, 39\penalty0
  (2):\penalty0 277--299, 2015.
\newblock ISSN 1063-5203.

\bibitem[Chang et~al.(2017)Chang, Li, Póczos, Vijaya~Kumar, and
  Sankaranarayanan]{one_network}
J.~R. Chang, C.-L. Li, B.~Póczos, B.~Vijaya~Kumar, and A.~C. Sankaranarayanan.
\newblock One network to solve them all — solving linear inverse problems
  using deep projection models.
\newblock In \emph{2017 IEEE International Conference on Computer Vision
  (ICCV)}, pages 5889--5898, 2017.

\bibitem[Chen et~al.(2018)Chen, Saulnier, Atanasov, Lee, Kumar, Pappas, and
  Morari]{mpc_constrained_neural_nets}
S.~Chen, K.~Saulnier, N.~Atanasov, D.~D. Lee, V.~Kumar, G.~J. Pappas, and
  M.~Morari.
\newblock Approximating explicit model predictive control using constrained
  neural networks.
\newblock In \emph{2018 Annual American Control Conference (ACC)}, pages
  1520--1527, 2018.

\bibitem[Chen et~al.(2022)Chen, Wang, Atanasov, Kumar, and
  Morari]{mpc_primal_active_sets}
S.~W. Chen, T.~Wang, N.~Atanasov, V.~Kumar, and M.~Morari.
\newblock Large scale model predictive control with neural networks and primal
  active sets.
\newblock \emph{Automatica}, 135:\penalty0 109947, 2022.
\newblock ISSN 0005-1098.

\bibitem[Chen et~al.(2021)Chen, Chen, Chen, Heaton, Liu, Wang, and Yin]{l2o}
T.~Chen, X.~Chen, W.~Chen, H.~Heaton, J.~Liu, Z.~Wang, and W.~Yin.
\newblock Learning to optimize: A primer and a benchmark, 2021.

\bibitem[Chen et~al.(2020)Chen, Zhang, Reisinger, and Song]{reasoning_layer}
X.~Chen, Y.~Zhang, C.~Reisinger, and L.~Song.
\newblock Understanding deep architectures with reasoning layer.
\newblock In \emph{Proceedings of the 34th International Conference on Neural
  Information Processing Systems}, NIPS'20, Red Hook, NY, USA, 2020. Curran
  Associates Inc.
\newblock ISBN 9781713829546.

\bibitem[Cohen et~al.(2017)Cohen, Afshar, Tapson, and van Schaik]{emnist}
G.~Cohen, S.~Afshar, J.~Tapson, and A.~van Schaik.
\newblock Emnist: an extension of mnist to handwritten letters, 2017.

\bibitem[{de Boor}(1972)]{b_spline}
C.~{de Boor}.
\newblock On calculating with b-splines.
\newblock \emph{Journal of Approximation Theory}, 6\penalty0 (1):\penalty0
  50--62, 1972.
\newblock ISSN 0021-9045.

\bibitem[Diamond et~al.(2017)Diamond, Sitzmann, Heide, and
  Wetzstein]{Diamond2017UnrolledOW}
S.~Diamond, V.~Sitzmann, F.~Heide, and G.~Wetzstein.
\newblock Unrolled optimization with deep priors.
\newblock \emph{ArXiv}, abs/1705.08041, 2017.

\bibitem[Diehl et~al.(2009)Diehl, Ferreau, and Haverbeke]{nonlinear_mpc}
M.~Diehl, H.~J. Ferreau, and N.~Haverbeke.
\newblock \emph{Efficient Numerical Methods for Nonlinear MPC and Moving
  Horizon Estimation}, pages 391--417.
\newblock Springer Berlin Heidelberg, Berlin, Heidelberg, 2009.
\newblock ISBN 978-3-642-01094-1.

\bibitem[Dinitz et~al.(2021)Dinitz, Im, Lavastida, Moseley, and
  Vassilvitskii]{fast_matching}
M.~Dinitz, S.~Im, T.~Lavastida, B.~Moseley, and S.~Vassilvitskii.
\newblock Faster matchings via learned duals.
\newblock In A.~Beygelzimer, Y.~Dauphin, P.~Liang, and J.~W. Vaughan, editors,
  \emph{Advances in Neural Information Processing Systems}, 2021.

\bibitem[Douglas and Rachford(1956)]{dr_splitting}
J.~Douglas and H.~H. Rachford.
\newblock On the numerical solution of heat conduction problems in two and
  three space variables.
\newblock \emph{Transactions of the American Mathematical Society}, 82\penalty0
  (2):\penalty0 421--439, 1956.
\newblock ISSN 00029947.

\bibitem[d’Aspremont et~al.(2021)d’Aspremont, Scieur, and
  Taylor]{accel_survey}
A.~d’Aspremont, D.~Scieur, and A.~Taylor.
\newblock Acceleration methods.
\newblock \emph{Foundations and Trends in Optimization}, 5\penalty0
  (1-2):\penalty0 1--245, 2021.
\newblock ISSN 2167-3888.

\bibitem[Eckstein(1989)]{eckstein_thesis}
J.~Eckstein.
\newblock \emph{Splitting methods for monotone operators with applications to
  parallel optimization}.
\newblock PhD thesis, Massachusetts Institute of Technology, 1989.

\bibitem[{El Ghaoui} and Lebret(1997)]{ElGhaoui_robust_ls}
L.~{El Ghaoui} and H.~Lebret.
\newblock Robust solutions to least-squares problems with uncertain data.
\newblock \emph{SIAM Journal on Matrix Analysis and Applications}, 18:\penalty0
  1035--1064, 1997.

\bibitem[Elad and Aharon(2006)]{learned_dictionaries_images}
M.~Elad and M.~Aharon.
\newblock Image denoising via sparse and redundant representations over learned
  dictionaries.
\newblock \emph{IEEE Transactions on Image Processing}, 15\penalty0
  (12):\penalty0 3736--3745, 2006.

\bibitem[Fienup(1982)]{phase_retrieval}
J.~R. Fienup.
\newblock Phase retrieval algorithms: a comparison.
\newblock \emph{Appl. Opt.}, 21\penalty0 (15):\penalty0 2758--2769, 1982.

\bibitem[Finn et~al.(2017)Finn, Abbeel, and Levine]{maml}
C.~Finn, P.~Abbeel, and S.~Levine.
\newblock Model-agnostic meta-learning for fast adaptation of deep networks.
\newblock In D.~Precup and Y.~W. Teh, editors, \emph{Proceedings of the 34th
  International Conference on Machine Learning}, volume~70 of \emph{Proceedings
  of Machine Learning Research}, pages 1126--1135. PMLR, 2017.

\bibitem[Gabay(1983)]{admm_dr_equiv}
D.~Gabay.
\newblock Applications of the method of multipliers to variational
  inequalities.
\newblock 1983.

\bibitem[Gabay and Mercier(1976)]{Gabay1976ADA}
D.~Gabay and B.~Mercier.
\newblock A dual algorithm for the solution of nonlinear variational problems
  via finite element approximation.
\newblock \emph{Computers \& Mathematics With Applications}, 2:\penalty0
  17--40, 1976.

\bibitem[Garstka et~al.(2019)Garstka, Cannon, and Goulart]{cosmo}
M.~Garstka, M.~Cannon, and P.~Goulart.
\newblock {COSMO}: A conic operator splitting method for large convex problems.
\newblock In \emph{European Control Conference}, 2019.

\bibitem[Giselsson and Boyd(2017)]{giselsson_lin_conv}
P.~Giselsson and S.~Boyd.
\newblock Linear convergence and metric selection for douglas-rachford
  splitting and admm.
\newblock \emph{IEEE Transactions on Automatic Control}, 62\penalty0
  (2):\penalty0 532--544, 2017.

\bibitem[Gregor and LeCun(2010)]{lista}
K.~Gregor and Y.~LeCun.
\newblock Learning fast approximations of sparse coding.
\newblock In \emph{Proceedings of the 27th International Conference on
  International Conference on Machine Learning}, ICML'10, page 399–406,
  Madison, WI, USA, 2010. Omnipress.
\newblock ISBN 9781605589077.

\bibitem[Gupta and Roughgarden(2017)]{pac_bayes_gen_l2o}
R.~Gupta and T.~Roughgarden.
\newblock A pac approach to application-specific algorithm selection.
\newblock \emph{SIAM Journal on Computing}, 46\penalty0 (3):\penalty0
  992--1017, 2017.

\bibitem[He et~al.(2020)He, Wen, Jin, and Li]{deep_learning_mimo}
H.~He, C.-K. Wen, S.~Jin, and G.~Y. Li.
\newblock Model-driven deep learning for mimo detection.
\newblock \emph{IEEE Transactions on Signal Processing}, 68:\penalty0
  1702--1715, 2020.

\bibitem[Heaton et~al.(2020)Heaton, Chen, Wang, and Yin]{safeguard_convex}
H.~Heaton, X.~Chen, Z.~Wang, and W.~Yin.
\newblock Safeguarded learned convex optimization.
\newblock \emph{ArXiv}, abs/2003.01880, 2020.

\bibitem[Hentenryck(2021)]{ml_opf}
P.~V. Hentenryck.
\newblock \emph{Machine Learning for Optimal Power Flows}, chapter~3, pages
  62--82.
\newblock 2021.

\bibitem[Hong et~al.(2016)Hong, Luo, and Razaviyayn]{admm_nonconvex2}
M.~Hong, Z.-Q. Luo, and M.~Razaviyayn.
\newblock Convergence analysis of alternating direction method of multipliers
  for a family of nonconvex problems.
\newblock \emph{SIAM Journal on Optimization}, 26\penalty0 (1):\penalty0
  337--364, 2016.

\bibitem[Hospedales et~al.(2020)Hospedales, Antoniou, Micaelli, and
  Storkey]{hospedales2020metalearning}
T.~Hospedales, A.~Antoniou, P.~Micaelli, and A.~Storkey.
\newblock Meta-learning in neural networks: A survey, 2020.

\bibitem[Huber(1964)]{huber}
P.~J. Huber.
\newblock Robust estimation of a location parameter.
\newblock \emph{The Annals of Mathematical Statistics}, 35\penalty0
  (1):\penalty0 73--101, 1964.
\newblock ISSN 00034851.

\bibitem[Ichnowski et~al.(2021)Ichnowski, Jain, Stellato, Banjac, Luo,
  Borrelli, Gonzales, Stoica, and Goldberg]{rlqp}
J.~Ichnowski, P.~Jain, B.~Stellato, G.~Banjac, M.~Luo, F.~Borrelli, J.~E.
  Gonzales, I.~Stoica, and K.~Goldberg.
\newblock Accelerating quadratic optimization with reinforcement learning.
\newblock In \emph{Advances in Neural Information Processing Systems 35}, 2021.

\bibitem[Jolliffe(2005)]{pca}
I.~Jolliffe.
\newblock \emph{Principal Component Analysis}.
\newblock John Wiley \& Sons, Ltd, 2005.
\newblock ISBN 9780470013199.

\bibitem[Jung et~al.(2022)Jung, Park, and Park]{qp_accelerate_rho}
H.~Jung, J.~Park, and J.~Park.
\newblock Learning context-aware adaptive solvers to accelerate quadratic
  programming, 2022.

\bibitem[Kalman(1960)]{kalman_filter}
R.~E. Kalman.
\newblock A new approach to linear filtering and prediction problems.
\newblock \emph{Transactions of the ASME--Journal of Basic Engineering},
  82\penalty0 (Series D):\penalty0 35--45, 1960.

\bibitem[Karg and Lucia(2020)]{deep_learning_mpc_karg}
B.~Karg and S.~Lucia.
\newblock Efficient representation and approximation of model predictive
  control laws via deep learning.
\newblock \emph{IEEE Transactions on Cybernetics}, PP:\penalty0 1--13, 2020.

\bibitem[Khodak et~al.(2022)Khodak, Balcan, Talwalkar, and
  Vassilvitskii]{khodak2022learning}
M.~Khodak, N.~Balcan, A.~Talwalkar, and S.~Vassilvitskii.
\newblock Learning predictions for algorithms with predictions.
\newblock In A.~H. Oh, A.~Agarwal, D.~Belgrave, and K.~Cho, editors,
  \emph{Advances in Neural Information Processing Systems}, 2022.

\bibitem[Kingma and Ba(2015)]{adam}
D.~P. Kingma and J.~Ba.
\newblock Adam: {A} method for stochastic optimization.
\newblock In Y.~Bengio and Y.~LeCun, editors, \emph{3rd International
  Conference on Learning Representations, {ICLR} 2015, San Diego, CA, USA, May
  7-9, 2015, Conference Track Proceedings}, 2015.

\bibitem[Kraska et~al.(2018)Kraska, Beutel, Chi, Dean, and
  Polyzotis]{learned_index_structures}
T.~Kraska, A.~Beutel, E.~H. Chi, J.~Dean, and N.~Polyzotis.
\newblock The case for learned index structures.
\newblock In \emph{Proceedings of the 2018 International Conference on
  Management of Data}, SIGMOD '18, page 489–504, New York, NY, USA, 2018.
  Association for Computing Machinery.
\newblock ISBN 9781450347037.

\bibitem[Li and Malik(2016)]{learning_to_optimize_malik}
K.~Li and J.~Malik.
\newblock Learning to optimize.
\newblock \emph{arXiv e-prints}, 2016.

\bibitem[Li et~al.(2023)Li, Kolouri, and Mohammadi]{kolouri_learn_cons}
M.~Li, S.~Kolouri, and J.~Mohammadi.
\newblock Learning to solve optimization problems with hard linear constraints.
\newblock \emph{IEEE Access}, 11:\penalty0 59995--60004, 2023.

\bibitem[Liu et~al.(2019)Liu, Chen, Wang, and Yin]{alista}
J.~Liu, X.~Chen, Z.~Wang, and W.~Yin.
\newblock {ALISTA}: Analytic weights are as good as learned weights in {LISTA}.
\newblock In \emph{International Conference on Learning Representations}, 2019.

\bibitem[Mak et~al.(2023)Mak, Chatzos, Tanneau, and
  Hentenryck]{mak2023learning}
T.~W.~K. Mak, M.~Chatzos, M.~Tanneau, and P.~V. Hentenryck.
\newblock Learning regionally decentralized ac optimal power flows with admm,
  2023.

\bibitem[McAllester(1998)]{McAllester_PAC_Bayes}
D.~A. McAllester.
\newblock Some pac-bayesian theorems.
\newblock In \emph{Proceedings of the Eleventh Annual Conference on
  Computational Learning Theory}, COLT' 98, page 230–234, New York, NY, USA,
  1998. Association for Computing Machinery.
\newblock ISBN 1581130570.

\bibitem[McAllester(2003)]{McAllester_simplified}
D.~A. McAllester.
\newblock Simplified pac-bayesian margin bounds.
\newblock In \emph{Annual Conference Computational Learning Theory}, 2003.

\bibitem[Metz et~al.(2022)Metz, Harrison, Freeman, Merchant, Beyer, Bradbury,
  Agrawal, Poole, Mordatch, Roberts, and Sohl-Dickstein]{VeLO}
L.~Metz, J.~Harrison, C.~D. Freeman, A.~Merchant, L.~Beyer, J.~Bradbury,
  N.~Agrawal, B.~Poole, I.~Mordatch, A.~Roberts, and J.~Sohl-Dickstein.
\newblock Velo: Training versatile learned optimizers by scaling up, 2022.

\bibitem[Millane(1990)]{phase_retrieval_crystallography}
R.~P. Millane.
\newblock Phase retrieval in crystallography and optics.
\newblock \emph{J. Opt. Soc. Am. A}, 7\penalty0 (3):\penalty0 394--411, 1990.

\bibitem[Misra et~al.(2022)Misra, Roald, and Ng]{learn_active_sets}
S.~Misra, L.~Roald, and Y.~Ng.
\newblock Learning for constrained optimization: Identifying optimal active
  constraint sets.
\newblock \emph{INFORMS J. on Computing}, 34\penalty0 (1):\penalty0 463–480,
  2022.
\newblock ISSN 1526-5528.

\bibitem[Mitzenmacher and Vassilvitskii(2020)]{algos_with_predictions}
M.~Mitzenmacher and S.~Vassilvitskii.
\newblock Algorithms with predictions, 2020.

\bibitem[Monga et~al.(2021)Monga, Li, and Eldar]{algo_unrolling}
V.~Monga, Y.~Li, and Y.~C. Eldar.
\newblock Algorithm unrolling: Interpretable, efficient deep learning for
  signal and image processing.
\newblock \emph{IEEE Signal Processing Magazine}, 38\penalty0 (2):\penalty0
  18--44, 2021.

\bibitem[Neyshabur et~al.(2018)Neyshabur, Bhojanapalli, and
  Srebro]{PAC_Bayes_nn}
B.~Neyshabur, S.~Bhojanapalli, and N.~Srebro.
\newblock A {PAC}-bayesian approach to spectrally-normalized margin bounds for
  neural networks.
\newblock In \emph{International Conference on Learning Representations}, 2018.

\bibitem[O'Donoghue(2021)]{scs_quadratic}
B.~O'Donoghue.
\newblock Operator splitting for a homogeneous embedding of the linear
  complementarity problem.
\newblock \emph{SIAM Journal on Optimization}, 31\penalty0 (3):\penalty0
  1999--2023, 2021.

\bibitem[O'Donoghue et~al.(2019)O'Donoghue, Chu, Parikh, and Boyd]{scs}
B.~O'Donoghue, E.~Chu, N.~Parikh, and S.~Boyd.
\newblock {SCS}: Splitting conic solver, version 2.1.2.
\newblock \url{https://github.com/cvxgrp/scs}, 2019.

\bibitem[Parikh and Boyd(2014)]{prox_algos}
N.~Parikh and S.~Boyd.
\newblock Proximal algorithms.
\newblock \emph{Found. Trends Optim.}, 1\penalty0 (3):\penalty0 127–239,
  2014.
\newblock ISSN 2167-3888.

\bibitem[Pr{\'{e}}mont{-}Schwarz et~al.(2022)Pr{\'{e}}mont{-}Schwarz, Vitku,
  and Feyereisl]{safeguard_l2o}
I.~Pr{\'{e}}mont{-}Schwarz, J.~Vitku, and J.~Feyereisl.
\newblock A simple guard for learned optimizers.
\newblock In K.~Chaudhuri, S.~Jegelka, L.~Song, C.~Szepesv{\'{a}}ri, G.~Niu,
  and S.~Sabato, editors, \emph{International Conference on Machine Learning,
  {ICML} 2022, 17-23 July 2022, Baltimore, Maryland, {USA}}, volume 162 of
  \emph{Proceedings of Machine Learning Research}, pages 17910--17925. {PMLR},
  2022.

\bibitem[Purohit et~al.(2018)Purohit, Svitkina, and Kumar]{ski_rental}
M.~Purohit, Z.~Svitkina, and R.~Kumar.
\newblock Improving online algorithms via ml predictions.
\newblock In S.~Bengio, H.~Wallach, H.~Larochelle, K.~Grauman, N.~Cesa-Bianchi,
  and R.~Garnett, editors, \emph{Advances in Neural Information Processing
  Systems}, volume~31. Curran Associates, Inc., 2018.

\bibitem[Rockafellar and Wets(1998)]{rockefeller_variational}
R.~Rockafellar and R.~J.-B. Wets.
\newblock \emph{Variational Analysis}.
\newblock Springer Verlag, Heidelberg, Berlin, New York, 1998.

\bibitem[Rohatgi(2020)]{caching}
D.~Rohatgi.
\newblock \emph{Near-Optimal Bounds for Online Caching with Machine Learned
  Advice}, pages 1834--1845.
\newblock 2020.

\bibitem[Ruder(2017)]{ruder2017overview}
S.~Ruder.
\newblock An overview of multi-task learning in deep neural networks, 2017.

\bibitem[Ryu and Boyd(2015)]{mon_primer}
E.~K. Ryu and S.~P. Boyd.
\newblock A primer on monotone operator methods.
\newblock 2015.

\bibitem[Ryu and Yin(2022)]{lscomo}
E.~K. Ryu and W.~Yin.
\newblock \emph{Large-Scale Convex Optimization: Algorithms amp; Analyses via
  Monotone Operators}.
\newblock Cambridge University Press, 2022.

\bibitem[Sakaue and Oki(2022)]{learned_algos_pred_ws_discrete_convex}
S.~Sakaue and T.~Oki.
\newblock Discrete-convex-analysis-based framework for warm-starting algorithms
  with predictions.
\newblock In S.~Koyejo, S.~Mohamed, A.~Agarwal, D.~Belgrave, K.~Cho, and A.~Oh,
  editors, \emph{Advances in Neural Information Processing Systems}, volume~35,
  pages 20988--21000. Curran Associates, Inc., 2022.

\bibitem[Sambharya et~al.(2023)Sambharya, Hall, Amos, and Stellato]{l2ws_l4dc}
R.~Sambharya, G.~Hall, B.~Amos, and B.~Stellato.
\newblock End-to-{End} {Learning} to {Warm}-{Start} for {Real}-{Time}
  {Quadratic} {Optimization}.
\newblock In N.~Matni, M.~Morari, and G.~J. Pappas, editors, \emph{Proceedings
  of the 5th {Annual} {Learning} for {Dynamics} and {Control} {Conference}},
  volume 211 of \emph{Proceedings of {Machine} {Learning} {Research}}, pages
  220--234. PMLR, 2023.

\bibitem[Shawe-Taylor and Williamson(1997)]{Shawe-Taylor_PAC_Bayes}
J.~Shawe-Taylor and R.~C. Williamson.
\newblock A pac analysis of a bayesian estimator.
\newblock In \emph{Proceedings of the Tenth Annual Conference on Computational
  Learning Theory}, COLT '97, page 2–9, New York, NY, USA, 1997. Association
  for Computing Machinery.
\newblock ISBN 0897918916.

\bibitem[Shechtman et~al.(2015)Shechtman, Eldar, Cohen, Chapman, Miao, and
  Segev]{phase_retrieval_imaging}
Y.~Shechtman, Y.~C. Eldar, O.~Cohen, H.~N. Chapman, J.~Miao, and M.~Segev.
\newblock Phase retrieval with application to optical imaging: A contemporary
  overview.
\newblock \emph{IEEE Signal Processing Magazine}, 32\penalty0 (3):\penalty0
  87--109, 2015.

\bibitem[Sjölund and Bånkestad(2022)]{sjolund2022graphbased}
J.~Sjölund and M.~Bånkestad.
\newblock Graph-based neural acceleration for nonnegative matrix factorization,
  2022.

\bibitem[Song and Scaramuzza(2022)]{quadcopter_pendulum}
Y.~Song and D.~Scaramuzza.
\newblock Policy search for model predictive control with application to agile
  drone flight.
\newblock \emph{IEEE Transactions on Robotics}, 38\penalty0 (4):\penalty0
  2114--2130, 2022.

\bibitem[Sopasakis et~al.(2019)Sopasakis, Menounou, and Patrinos]{super_scs}
P.~Sopasakis, K.~Menounou, and P.~Patrinos.
\newblock Superscs: fast and accurate large-scale conic optimization.
\newblock pages 1500--1505, 2019.

\bibitem[Sra et~al.(2011)Sra, Nowozin, and Wright]{opt_for_ml}
S.~Sra, S.~Nowozin, and S.~J. Wright.
\newblock \emph{Optimization for Machine Learning}.
\newblock The MIT Press, 2011.
\newblock ISBN 026201646X.

\bibitem[Stellato et~al.(2020)Stellato, Banjac, Goulart, Bemporad, and
  Stephen]{osqp}
B.~Stellato, G.~Banjac, P.~Goulart, A.~Bemporad, and B.~Stephen.
\newblock {OSQP: An Operator Splitting Solver for Quadratic Programs}.
\newblock \emph{Mathematical Programming Computation}, 12\penalty0
  (4):\penalty0 637--672, 2020.

\bibitem[Tan et~al.(2023)Tan, Mukherjee, Tang, and Sch\"{o}nlieb]{learn_mirror}
H.~Y. Tan, S.~Mukherjee, J.~Tang, and C.-B. Sch\"{o}nlieb.
\newblock Data-driven mirror descent with input-convex neural networks.
\newblock \emph{SIAM Journal on Mathematics of Data Science}, 5\penalty0
  (2):\penalty0 558--587, 2023.

\bibitem[Tropp(2011)]{Tropp_2011}
J.~A. Tropp.
\newblock User-friendly tail bounds for sums of random matrices.
\newblock \emph{Foundations of Computational Mathematics}, 12\penalty0
  (4):\penalty0 389--434, 2011.

\bibitem[Venkataraman and Amos(2021)]{neural_fp_accel_amos}
S.~Venkataraman and B.~Amos.
\newblock Neural fixed-point acceleration for convex optimization.
\newblock \emph{CoRR}, abs/2107.10254, 2021.

\bibitem[Vilalta and Drissi(2001)]{meta_learning_survey}
R.~Vilalta and Y.~Drissi.
\newblock A perspective view and survey of meta-learning.
\newblock \emph{Artificial Intelligence Review}, 18, 2001.

\bibitem[Walker and Ni(2011)]{anderson_acceleration}
H.~F. Walker and P.~Ni.
\newblock Anderson acceleration for fixed-point iterations.
\newblock \emph{SIAM Journal on Numerical Analysis}, 49\penalty0 (4):\penalty0
  1715--1735, 2011.

\bibitem[Wang et~al.(2020)Wang, Wilder, Perrault, and
  Tambe]{surrogate_learning}
K.~Wang, B.~Wilder, A.~Perrault, and M.~Tambe.
\newblock Automatically learning compact quality-aware surrogates for
  optimization problems.
\newblock In \emph{Proceedings of the 34th International Conference on Neural
  Information Processing Systems}, NIPS'20, Red Hook, NY, USA, 2020. Curran
  Associates Inc.
\newblock ISBN 9781713829546.

\bibitem[Wang et~al.(2019)Wang, Yin, and Zeng]{admm_nonconvex}
Y.~Wang, W.~Yin, and J.~Zeng.
\newblock Global convergence of admm in nonconvex nonsmooth optimization.
\newblock \emph{J. Sci. Comput.}, 78\penalty0 (1):\penalty0 29–63, 2019.
\newblock ISSN 0885-7474.

\bibitem[Wu et~al.(2020)Wu, Guo, Li, and Zhang]{glista}
K.~Wu, Y.~Guo, Z.~Li, and C.~Zhang.
\newblock Sparse coding with gated learned ista.
\newblock In \emph{International Conference on Learning Representations}, 2020.

\bibitem[Wu et~al.(2022)Wu, Cui, Pei, and Zhao]{GNNBook2022}
L.~Wu, P.~Cui, J.~Pei, and L.~Zhao.
\newblock \emph{Graph Neural Networks: Foundations, Frontiers, and
  Applications}.
\newblock Springer Singapore, Singapore, 2022.

\bibitem[Xie and Soh(1994)]{rkf}
L.~Xie and Y.~C. Soh.
\newblock Robust kalman filtering for uncertain systems.
\newblock \emph{Systems \& Control Letters}, 22\penalty0 (2):\penalty0
  123--129, 1994.
\newblock ISSN 0167-6911.

\bibitem[Yurtsever et~al.(2021)Yurtsever, Tropp, Fercoq, Udell, and
  Cevher]{yurtsever2021scalable}
A.~Yurtsever, J.~A. Tropp, O.~Fercoq, M.~Udell, and V.~Cevher.
\newblock Scalable semidefinite programming.
\newblock \emph{SIAM Journal on Mathematics of Data Science}, 3\penalty0
  (1):\penalty0 171--200, 2021.

\bibitem[Zamzam and Baker(2020)]{ml_opf_smart}
A.~S. Zamzam and K.~Baker.
\newblock Learning optimal solutions for extremely fast ac optimal power flow.
\newblock In \emph{2020 IEEE International Conference on Communications,
  Control, and Computing Technologies for Smart Grids (SmartGridComm)}, pages
  1--6, 2020.

\bibitem[Zhang et~al.(2020)Zhang, O'Donoghue, and Boyd]{zhang2020globally}
J.~Zhang, B.~O'Donoghue, and S.~Boyd.
\newblock Globally convergent type-{I} anderson acceleration for nonsmooth
  fixed-point iterations.
\newblock \emph{SIAM Journal on Optimization}, 30\penalty0 (4):\penalty0
  3170--3197, 2020.

\bibitem[Zhang et~al.(2017)Zhang, Zuo, Gu, and Zhang]{denoise_prior}
K.~Zhang, W.~Zuo, S.~Gu, and L.~Zhang.
\newblock Learning deep cnn denoiser prior for image restoration.
\newblock In \emph{2017 IEEE Conference on Computer Vision and Pattern
  Recognition (CVPR)}, pages 2808--2817, Los Alamitos, CA, USA, 2017. IEEE
  Computer Society.

\bibitem[Zhang et~al.(2019)Zhang, Bujarbaruah, and
  Borrelli]{borrelli_mpc_primal_dual}
X.~Zhang, M.~Bujarbaruah, and F.~Borrelli.
\newblock Safe and near-optimal policy learning for model predictive control
  using primal-dual neural networks.
\newblock In \emph{2019 American Control Conference (ACC)}, pages 354--359,
  2019.

\bibitem[Zou et~al.(2006)Zou, Hastie, and Tibshirani]{sparse_PCA}
H.~Zou, T.~Hastie, and R.~Tibshirani.
\newblock Sparse principal component analysis.
\newblock \emph{Journal of Computational and Graphical Statistics}, 15\penalty0
  (2):\penalty0 265--286, 2006.
\newblock ISSN 10618600.

\end{thebibliography}
% \bibliography{bibliography}

\end{document}